\documentclass[10pt]{article}
\usepackage{amsmath, geometry, amssymb, amsthm, latexsym}
\setlength{\textheight}{8.75in}
\setlength{\textwidth}{6.5in}
\setlength{\topmargin}{0.0in}
\setlength{\headheight}{0.0in}
\setlength{\headsep}{0.0in}
\setlength{\leftmargin}{0.0in}
\setlength{\oddsidemargin}{0.0in}
\setlength{\parindent}{3pc}
\usepackage{graphicx}

\def\N{\mathbb N}

\def\R{\mathbb R}

\def\Irr{\rm Irr}

\def\bs{\mathbb{s}}

\def\bA{\mathbb{A}}

\def\bC{\mathbb{C}}

\def\bF{\mathbb{F}}
\def\bG{\mathbb{G}}

\def\bQ{\mathbb{Q}}
\def\bR{\mathbb{R}}

\def\bT{\mathbb{T}}

\def\bZ{\mathbb{Z}}
\def\cA{\mathcal{A}}
\def\cB{\mathcal{B}}

\def\cD{\mathcal{D}}
\def\cE{\mathcal{E}}

\def\cH{\mathcal{H}}
\def\cI{\mathcal{I}}
\def\cJ{\mathcal{J}}
\def\cK{\mathcal{K}}

\def\cM{\mathcal{M}}

\def\cO{\mathcal{O}}

\def\cS{\mathcal{S}}

\def\cW{\mathcal{W}}
\def\cX{\mathcal{X}}

\def\cZ{\mathcal{Z}}

\def\fg{\mathfrak{g}}

\def\fl{\mathfrak{l}}

\def\fz{\mathfrak{z}}

\def\fB{\mathfrak{B}}

\def\fF{\mathfrak{F}}

\def\fX{\mathfrak{X}}

\def\Aut{\operatorname{Aut}}
\def\BC{\operatorname{BC}}

\def\End{\operatorname{End}}

\def\Frob{\operatorname{Frob}}
\def\GL{\operatorname{GL}}

\def\Gal{\operatorname{Gal}}
\def\Hasse{\operatorname{Hasse}}

\def\Hom{\operatorname{Hom}}

\def\Im{\operatorname{Im}}
\def\Ind{\operatorname{Ind}}

\def\Lie{\operatorname{Lie}}

\def\RS{\operatorname{RS}}

\def\Spec{\operatorname{Spec}}
\def\Sp{\operatorname{Sp}}
\def\Stab{\operatorname{Stab}}

\def\WD{\operatorname{WD}}
\def\ad{\operatorname{ad}}
\def\can{\operatorname{can}}

\def\cl{\operatorname{cl}}
\def\del{\partial}
\def\det{\operatorname{det}}

\def\dim{\operatorname{dim}}

\def\Disc{\operatorname{Disc}}

\def\exp{\operatorname{exp}}

\def\herm{\operatorname{herm}}
\def\id{\operatorname{id}}
\def\im{\operatorname{im}}
\def\ker{\operatorname{ker}}

\def\lin{\operatorname{lin}}
\def\log{\operatorname{log}}
\def\min{\operatorname{min}}
\def\mod{\operatorname{mod}}

\def\ord{\operatorname{ord}}
\def\rk{\operatorname{rk}}

\def\spl{\operatorname{spl}}

\def\sub{\operatorname{sub}}
\def\Std{\operatorname{Std}}
\def\tor{\operatorname{tor}}
\def\tr{\operatorname{tr}}
\def\rec{\operatorname{rec}}
\def\univ{\operatorname{univ}}

\def\Art{\operatorname{Art}}

\def\rec{\operatorname{rec}}

\def\bs{\backslash}

\newcommand{\mat}[4]{\left( \begin{array}{cc} {#1} & {#2} \\ {#3} & {#4}
\end{array} \right)}
\newcommand{\beas}{\begin{eqnarray*}}
\newcommand{\eeas}{\end{eqnarray*}}

\newcommand{\hooksearrow}{\mathrel{\rotatebox[origin=t]{-45}{$\hookrightarrow$}}}

\newcommand{\surjects}{\relbar\joinrel\twoheadrightarrow}
\newcommand{\isom}{\stackrel{\sim}{\longrightarrow}}

\theoremstyle{plain}
\newtheorem{theorem}{Theorem}[section]
\newtheorem*{theorem*}{Theorem}
\newtheorem{lemma}[theorem]{Lemma}
\newtheorem{corollary}[theorem]{Corollary}
\newtheorem{proposition}[theorem]{Proposition}

\theoremstyle{definition}
\newtheorem{definition}{Definition}

\parindent=.175in
\parskip=.05in

\title{Local-global compatibility for regular algebraic cuspidal automorphic representations when $\ell \neq p$}
\author{Ila Varma}

\begin{document}
\maketitle

\begin{abstract}
We prove the compatibility of local and global Langlands correspondences for $\GL_n$ up to semisimplification for the Galois representations constructed in \cite{HLTT,Scholze}. More precisely, let $r_p(\pi)$ denote an $n$-dimensional $p$-adic representation of the Galois group of a CM field $F$ attached to a regular algebraic cuspidal automorphic representation $\pi$ of $\GL_n(\bA_F)$. We show that the restriction of $r_p(\pi)$ to the decomposition group of a place $v\nmid p$ of $F$  corresponds up to semisimplification to $\rec(\pi_v)$, the image of $\pi_v$ under the local Langlands correspondence. Furthermore, we can show that the monodromy of the associated Weil-Deligne representation of $\left.r_p(\pi)\right|_{\Gal_{F_v}}$ is `more nilpotent' than the monodromy of $\rec(\pi_v)$.  
\end{abstract}

\section{Introduction}
Let $F$ be an imaginary CM (or totally real) field and let $\pi$ be a regular algebraic (i.e., $\pi_\infty$ has the same infinitesimal character as an irreducible algebraic representation $\rho_\pi$ of $\RS^F_\bQ \GL_n$) cuspidal automorphic representation of $\GL_n(\bA_F)$. In \cite{HLTT,Scholze}, the authors construct a continuous semisimple representation (depending on a choice of a rational prime $p$ and an isomorphism $\imath: \overline{\bQ}_p \isom \bC$)
	$$r_{p,\imath}(\pi): \Gal(\overline{F}/F) \longrightarrow \GL_n(\overline{\bQ}_p)$$
which satisfies the following: For every finite place $v \nmid p$ of $F$ such that $\pi$ and $F$ are both unramified at $v$, $r_{p,\imath}(\pi)$ is unramified at $v$ and
	$$\WD(\left.r_{p,\imath}(\pi)\right|_{G_{F_v}})^{ss} = \imath^{-1} \rec_{F_v}(\pi_v \otimes |\det|_v^{(1-n)/2})^{ss}.$$
Here $\rec_{F_v}$ as normalized in \cite{HT} denotes the local Langlands correspondence for $F_v$, and $\WD(r_v)$ denotes the Weil-Deligne representation associated to the the $p$-adic Galois representation $r_v$ of the decomposition group $G_{F_v} := \Gal(\overline{F_v}/F_v)$. 
In this paper, we extend local-global compatibility to all primes $v \nmid p$ of $F$. In particular, we prove the following theorem:

\begin{theorem*}[1$^{ss}$]\label{mainss} Keeping the notation of the previous paragraph, let $v \nmid p$ be a prime of $F$. Then
	$$\WD(\left.r_{p,\imath}(\pi)\right|_{G_{F_v}})^{ss} = \imath^{-1} \rec_{F_v}(\pi_v \otimes |\det|_v^{(1-n)/2})^{ss}.$$
\end{theorem*}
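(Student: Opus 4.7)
The plan is to reduce local-global compatibility at a place $v \nmid p$ to the unramified case established in \cite{HLTT,Scholze} via solvable CM base change. Since both sides of the desired identity are semisimple Weil representations of $W_{F_v}$, they are determined by their trace characters, and by Grothendieck's monodromy theorem each factors through a quotient of $W_{F_v}$ with finite inertia image, reducing the comparison to characters of a nearly-finite group.

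Two compatibilities are the main technical input. First, the construction of \cite{HLTT,Scholze} is compatible with solvable base change, $r_{p,\imath}(\BC_{L/F}(\pi)) \cong r_{p,\imath}(\pi)|_{G_L}$ whenever $L/F$ is a solvable CM Galois extension for which $\pi_L := \BC_{L/F}(\pi)$ remains cuspidal; this follows by Chebotarev from the known equalities at unramified places together with the base-change relation on Satake parameters. Second, local Langlands is compatible with base change, $\rec_{L_w}(\pi_{L,w})^{ss} = \rec_{F_v}(\pi_v)^{ss}|_{W_{L_w}}$ for each $w \mid v$. Combined with Arthur--Clozel's existence of cuspidality-preserving solvable base change, these transport the problem from $(F, \pi, v)$ to $(L, \pi_L, w)$. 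I would then choose $L$ so that at each place $w \mid v$ of $L$ the inertial type of the Weil parameter of $\pi_{L,w}$ becomes trivial: since $\rec_{F_v}(\pi_v)^{ss}|_{I_{F_v}}$ has finite image, the fixed field $K/F_v$ of its kernel is a finite extension, and one can realize $K$ as a completion of a solvable CM extension $L/F$ while preserving cuspidality. A Brauer-induction argument on the finite quotient through which the two semisimple Weil characters factor (up to an unramified twist) then assembles the restriction equalities on the various $W_{L_w}$ into an equality on all of $W_{F_v}$.

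The main obstacle is that if $\pi_v$ has nonzero monodromy $N$ (for example if $\pi_v$ is a twist of a Steinberg representation), then $N$ is preserved under base change, so $\pi_{L,w}$ is never fully unramified and the black-box unramified case of \cite{HLTT,Scholze} does not literally apply. Since the target statement is only up to semisimplification, the monodromy is morally harmless, but to invoke the known compatibility one must either refine the unramified case to a statement about the Weil-parameter part alone---sufficient once the Weil parameter of $\pi_{L,w}$ is inertially unramified, even with $N \neq 0$---or employ an auxiliary device, such as approximating $\pi$ by a congruent automorphic representation with a spherical component at $v$, for which the known case does apply. Overcoming this refinement is the technical heart of the argument; once it is in hand, the base-change-plus-Brauer machinery yields Theorem 1$^{ss}$.
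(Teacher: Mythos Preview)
Your approach is genuinely different from the paper's, and the obstacle you flag is real and unresolved in your proposal. You try to reduce to the \emph{unramified} local-global compatibility already in \cite{HLTT,Scholze} by solvable base change plus Brauer induction. As you correctly note, base change cannot kill the monodromy operator: if $\pi_v$ lies in the Steinberg block (or more generally has $N\neq 0$), then $\pi_{L,w}$ is never spherical, so the known unramified statement never literally applies. Your two proposed fixes are not worked out: the first (``refine the unramified case to inertially-unramified with $N\neq 0$'') is already a new local-global statement of the same depth as the theorem you are trying to prove, and the second (``approximate by a congruent $\pi'$ with spherical component at $v$'') would need a mechanism for producing such congruences and for transferring information back, which you do not supply. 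So as written the proposal has a genuine gap at exactly the point you identify.

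The paper avoids this obstacle by not reducing to the unramified case at all. For a place $v\nmid p$ that splits over $F^+$, it works directly with the ramified Hecke algebra: using Chenevier's pseudocharacter on the Bernstein center (Proposition in \S\ref{ramified}), for each $\sigma\in W_{F_v}$ one has an element $T_{v,\fB,\sigma}$ of a central subalgebra of the finite-level Hecke algebra whose eigenvalue on $\Pi^{K_\ell}$ is $\tr\rec_{F_v}(\Pi_v|\det|_v^{(1-2n)/2})(\sigma)$. These operators act compatibly on both the classical and $p$-adic spaces of cusp forms on the auxiliary unitary group $G$. Since classical forms of sufficiently regular weight already satisfy \emph{full} local-global compatibility (Proposition~\ref{classical}), and since $p$-adic forms are congruent to sums of classical forms modulo arbitrary powers of $p$ (Lemma~\ref{density}), one interpolates a pseudorepresentation $T:G_F^S\to\bT^{\ord,p}$ with $d(T_{v,\fB,\sigma})T(\sigma)=\tilde T_{v,\fB,\sigma}$ at every split ramified $v$. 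Evaluating on the relevant $p$-adic form gives the $2n$-dimensional compatibility (Proposition~\ref{localglobalss}); the group-theoretic extraction of \S11 then isolates the $n$-dimensional $r_{p,\imath}(\pi)$. Only afterwards, to pass from ``$v$ split over $F^+$'' to arbitrary $v\nmid p$, does the paper invoke a base-change/patching step (Sorensen's lemma, as in \cite{HT}, Theorem~VII.1.9), and that step reduces to split-over-$F^+$ places, not to unramified ones. In short: your base-change idea is aimed at the wrong reduction, and the key input you are missing is the Bernstein-center Hecke operator $T_{v,\fB,\sigma}$ together with the density/congruence argument that transports full local-global compatibility from classical to $p$-adic forms.
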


In fact, our methods allow us to `bound' the monodromy of $\WD(\left.r_{p,\imath}(\pi)\right|_{G_{F_v}})^{\Frob-ss}$ by the monodromy of $\rec_{F_v}(\pi_v|\det|_v^{(1-n)/2})$.  Using the notation introduced in Definition \ref{prec}, we can generalize the above theorem to:

\begin{theorem*}[1]\label{main} Keeping the notation from above, let $v \nmid p$ be a prime of $F$. Then
	$$\WD(\left.r_{p,\imath}(\pi)\right|_{G_{F_v}})^{\Frob-ss} \prec \imath^{-1} \rec_{F_v}(\pi_v|\det|_v^{(1-n)/2}),$$
where `$\Frob$-ss' denotes Frobenius semisimplification.
\end{theorem*}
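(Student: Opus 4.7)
The strategy is to use the fact that $r_{p,\imath}(\pi)$ is constructed in \cite{HLTT,Scholze} as a $p$-adic limit of Galois representations $r_{p,\imath}(\pi_N)$ attached to conjugate self-dual regular algebraic cuspidal automorphic representations $\pi_N$ appearing in the cohomology of unitary Shimura varieties, for which local-global compatibility at all finite places away from $p$ is known by the combined work of Harris-Taylor, Taylor-Yoshida, Shin, Chenevier-Harris, and Caraiani. It thus suffices to run this $p$-adic approximation within a fixed Bernstein component at $v$ and then to transfer the compatibility through the limit, additionally controlling how monodromy behaves under the specialization.

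After a solvable base change (Arthur-Clozel) to an extension $F'/F$ split at $v$ and satisfying the usual CM conditions, the plan is to fix a Bushnell-Kutzko type $(J_v, \tau_v)$ whose associated Bernstein component $\mathfrak{s}$ contains $\pi_v$, and run the HLTT/Scholze approximation on the $\tau_v$-isotypic component of completed cohomology (rather than on the spherical or Iwahori-fixed part) so as to produce a sequence of classical conjugate self-dual $\pi_N$ with $(\pi_N)_v \in \mathfrak{s}$ whose Hecke parameters, and hence Galois representations $r_{p,\imath}(\pi_N)$, converge $p$-adically to those of $\pi$. For each $\pi_N$ the known local-global compatibility yields
\[
\WD(r_{p,\imath}(\pi_N)|_{G_{F_v}})^{\Frob-ss} = \imath^{-1}\rec_{F_v}\bigl((\pi_N)_v \otimes |\det|_v^{(1-n)/2}\bigr).
\]

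For Theorem 1$^{ss}$, the $p$-adic continuity of traces of Frobenius powers on the Galois side, combined with the continuity of the semisimple Weil-Deligne parameter in the cuspidal-support parameters on the automorphic side, lets the equality above pass to the limit. For the stronger $\prec$ statement of Theorem 1, the key additional input is the general principle (a consequence of Grothendieck's monodromy theorem together with rigidity of nilpotent conjugacy classes in $p$-adic families) that the Weil-Deligne monodromy of a specialization of a $p$-adic family of Galois representations is more nilpotent than the generic monodromy. Applying this to the family produced in Step (2), together with the equality above for $\pi_N$, gives the desired inequality provided the $\pi_N$ can be chosen so that the Jordan type of the monodromy of $\rec((\pi_N)_v)$ dominates that of $\rec(\pi_v)$.

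The principal obstacle is precisely this last condition. The HLTT/Scholze construction naturally produces congruences at hyperspecial or Iwahori level at $v$, whereas here one must work with $p$-adic approximations that remain inside a prescribed ramified Bernstein component and, worse, whose local monodromy partition dominates a specified one. Since monodromy is not continuous on Bernstein varieties, this requires genuinely automorphic input: density of classical points of specified inertial type in the $\tau_v$-part of the eigenvariety, together with control of the Weil-Deligne monodromy at such classical points. The remainder of the argument---the base change, the $p$-adic continuity of traces, and the specialization principle for monodromy---is comparatively formal once the correct congruent family at $v$ has been produced.
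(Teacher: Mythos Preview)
Your high-level strategy---approximate $\pi$ by classical forms inside a fixed Bernstein component at $v$, use known local-global compatibility for those, and pass to the limit---matches the paper's. But the two places where you wave your hands are exactly the places where the paper supplies specific tools, and without them the argument does not close.

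First, the ``specialization principle for monodromy in $p$-adic families'' is not directly available here: the HLTT construction gives, for each $k$, a finite linear combination of classical $G$-automorphic forms congruent to $\Pi$ mod $p^k$, not a rigid-analytic family over an affinoid in which $\Pi$ sits as a specialization. The paper therefore does not invoke any semicontinuity of $N$ on the Galois side. Instead it rewrites the relation $\prec_I$ as an explicit trace condition on the pseudocharacter: for suitable elements $b_{\eta,\zeta}\in\overline{\bQ}_p[G_F]$ built from inertia, one shows $(\sigma,N)\prec_I(\sigma',N')$ is equivalent to the vanishing of the functions $\tau\mapsto \wedge^k T(b_{\eta,\zeta}^j\,\tau)$ for appropriate $k,j,\eta,\zeta$ (Lemma~\ref{techdefn} in the paper). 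Such vanishing statements are polynomial in values of $T$ and therefore propagate through mod $p^k$ congruences with no need for a family.

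Second, and this is the gap you correctly flag as ``the principal obstacle,'' you need the approximating classical forms to have local monodromy \emph{bounded by} that of $\pi_v$, not merely to lie in the same Bernstein component. The paper's device for this is the Schneider--Zink idempotent $e_{\Pi,\cB_v}\in\fz_{\cB_v}$ (Proposition~6.2 of \cite{SZ}): it is nonzero on $\BC(\Pi)_v$ and has the property that $e_{\Pi,\cB_v}(\Pi_0)\neq 0$ forces $\rec(\Pi_0)\prec_I\rec(\BC(\Pi)_v)$. One then multiplies the trace identities above by (an integral multiple of) this idempotent; on every classical $\Pi_0$ surviving the idempotent, full local-global compatibility plus $\rec(\Pi_0)\prec_I\rec(\BC(\Pi)_v)$ forces the relevant $B^{k,j}_{\eta,\zeta}$ to vanish, hence they vanish in the limit, hence on $\Pi$ itself. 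Your proposal gestures at ``control of the Weil--Deligne monodromy at such classical points'' but does not name this mechanism, and a Bushnell--Kutzko type alone will not give it: types detect inertial support, not the finer monodromy partition.

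Two smaller discrepancies: the paper runs the whole argument on the quasi-split unitary similitude group $G$ in $2n$ variables (so the classical congruent forms are $G$-automorphic, not $\GL_n$-automorphic $\pi_N$ as you write), and then extracts the $n$-dimensional piece $r_{p,\imath}(\pi)$ from the $2n$-dimensional $R_p(\Pi(M))$ by the group-theoretic separation argument of \cite[Prop.~7.12]{HLTT}, varying the twist parameter $M$. Finally, the passage from split $v$ to general $v$ is handled by Sorensen's patching lemma \cite{So} (as in \cite[Thm.~VII.1.9]{HT}) rather than Arthur--Clozel base change, though your route would also work for that step.
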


To prove these results, we follow very similar arguments as in the construction of these Galois representations $r_{p,\imath}(\pi)$ as in \cite{HLTT}, which we will summarize below.

Let $\pi$ be a regular algebraic cuspidal automorphic representation on $\GL_n(\bA_F)$. Let $G$ denote the quasisplit unitary similitude group of signature $(n,n)$ associated to $F^{2n}$ and alternating form $\left(\begin{smallmatrix} 0 & 1_n \\-1_n & 0\end{smallmatrix}\right)$. It has a maximal parabolic $P = \{\GL_1 \times \left(\begin{smallmatrix}\ast & \ast \\0 & \ast\end{smallmatrix}\right)\} \subset G$ with Levi $L = \{\GL_1 \times\left(\begin{smallmatrix}\ast & 0 \\0 & \ast\end{smallmatrix}\right)\} \subset P$. However, note that $L \cong \GL_1 \times \RS^F_{\bQ}\GL_n$. For all sufficiently large positive integers $M$, let 
	$$\Pi(M) = \Ind_{P(\bA^{p,\infty})}^{G(\bA^{p,\infty})}(1 \times (\pi \otimes||\det||^M)^{p,\infty}).$$
The authors of \cite{HLTT} prove that $\Pi(M)$ is a subrepresentation of the space of overconvergent $p$-adic automorphic form on $G$ of some non-classical weight and finite slope. Classical cusp forms on this space base change via the trace formula to $\GL_{2n}$ to isobaric sums of conjugate self-dual cuspidal automorphic representations and they have Galois representations satisfying full local-global compatibility. Now, at all primes $v \nmid p$ of $F$ which split over $F^+$ (equivalently, at all primes away from $p$ where $G$ splits), the Bernstein centers associated to a finite union of Bernstein components containing $\Pi(M)_v$ map to the Hecke algebras acting on spaces of $p$-adic and classical cusp forms on $G$ of arbitrary integral (not necessarily classical) weight. For each $\sigma \in W_{F_v}$, the image of the Bernstein centers contains Hecke operators whose eigenvalue on a $p$-adic cusp form $\Pi'$ of $G$ is equal to 
	$$\tr \rec_{F_v} (\Pi'_v\otimes|\det|_v^{(1-2n)/2})(\sigma).$$ 
If $\pi'$ is classical, then local-global compatibility is already known, and so the eigenvalue is also equal to	
	$$\tr\WD(\left.r_p(\Pi')\right|_{G_{F_v}})^{ss}(\sigma),$$
where $r_p(\Pi'): G_F \rightarrow \GL_{2n}(\overline{\bQ}_p)$ denotes the Galois representation associated to $\Pi'$. By showing that there are linear combinations of classical cusp forms of $G$ whose Hecke eigenvalues are congruent mod $p^k$ to those of $\Pi(M)$ for each positive $k$, we are able to construct a continuous pseudorepresentation
	$T: G_F \rightarrow \overline{\bQ}_p$ satisfying the following: for every place $v \nmid p$ of $F$ which is split over $F^+$ and each $\sigma_v \in W_{F_v}$,
	$$T(\sigma_v) = \tr \rec_{F_v} (\Pi(M)_v\otimes|\det|_v^{(1-2n)/2})(\sigma_v).$$
This implies that there is a continuous semisimple Galois representation $r_{p,\imath}(\Pi(M)): G_F \rightarrow \GL_{2n}(\overline{\bQ}_p)$ whose trace is equal to $T$, and so for all primes $v$ of $F$ which are split over $F^+$ and lie above any rational prime other than $p$,
	$$\WD(\left.r_{p,\imath}(\Pi(M))\right|_{G_{F_v}})^{ss} \cong \imath^{-1} \rec_{F_v}(\Pi(M)_v \otimes |\det|_v^{(1-2n)/2})^{ss}.$$
At such primes $v$, we have 
	$$\Pi(M)_v = 1 \times (\pi_v \otimes |\det|_v^M) \boxplus (\pi_{{}^cv} \otimes |\det|_{{}^cv}^M)^{c,\vee},$$
thus if $\epsilon_p$ denotes the $p$-adic cyclotomic character, then $\WD(\left.r_{p,\imath}(\Pi(M)) \otimes \epsilon_p^{-M}\right|_{G_{F_v}})^{ss}$ is isomorphic to
	$$\imath^{-1} \rec_{F_v}(\pi_v|\det|_v^{(1-n)/2})^{ss} \oplus ((\imath^{-1} \rec_{F_{{}^cv}}(\pi_{{}^cv}|\det|^{(1-n)/2}_{{}^cv})^{ss})^{\vee,c}\otimes\epsilon_p^{1-2n-2M}).$$
Because we construct the above Galois representation for each sufficiently large positive integers $M$, it is now group theory to isolate an $n$-dimensional subquotient $r_{p,\imath}(\pi): G_F \rightarrow \GL_{n}(\overline{\bQ}_p)$ satisfying 
	$$\WD(\left.r_{p,\imath}(\pi)\right|_{G_{F_v}})^{ss} = \imath^{-1} \rec_{F_v}(\pi_v \otimes |\det|_v^{(1-n)/2})^{ss},$$
when $v \nmid p$ is a prime of $F$ which is split over $F^+$. Using the patching lemma by \cite{So}, we can remove the assumption that $v$ must split over $F^+$ and therefore conclude Theorem \ref{mainss}$^{ss}$. We then use idempotents constructed by \cite{SZ} and properties of $\wedge^k r_{p,\imath}(\Pi(M))$ and $\wedge^k \rec_{F_v}(\BC(\Pi(M))_v)$ to ``bound'' the monodromy of $\WD(r_{p,\imath}(\pi))^{\Frob-ss}$ by the monodromy of $\rec_{F_v}(\pi_v \otimes |\det|_v^{(1-n)/2})$.

\tableofcontents

\section{Notation and Conventions.}

In the sequel, $F$ will denote an imaginary CM field with maximal totally real subfield $F^+$, and $c$ will denote the nontrivial element of $\Gal(F/F^+)$. Let $p$ denote a rational prime such that all primes of $F^+$ above $p$ split in $F$. Let $n$ denote a positive integer and if $F^+ = \bQ$, assume $n > 2$. Let $\ell$ denote a rational prime such that $\ell \neq p$. Fix $\imath: \overline{\bQ}_p \isom \bC$. 

For any field $K$, $G_K$ will denote its absolute Galois group. If $K_0 \subset K$ is a subfield and $S$ is a finite set of primes of $K_0$, then we will denote by $G_K^S$ the maximal continuous quotient of $G_K$ in which all primes of $K$ not lying above an element of $S$ are unramified. 

If $K$ is an arbitrary number field and $v$ is a finite place of $K$, let $\varpi_v$ denote the uniformizer of $K_v$ and $k(v)$ is the residue field of $v$. Denote the absolute value on $K$ associated to $v$ by $|\cdot|_v$, which is normalized so that $|\varpi_{v}|_v = (\#k(v))^{-1}$. If $v$ is a real place of $K$, then $|x|_v := \pm x$, and if $v$ is complex, then $|x|_v = {}^cxx.$ Let 
	$$||\cdot||_K = \prod_{v} |\cdot|_v: \bA_K^\times \longrightarrow \bR^\times_{>0}.$$

If $r: G_{K_v} \rightarrow \GL_n(\overline{\bQ}_{p})$ denotes a continuous representation of $G_{K_v}$ where $v \nmid p$ is finite, then we will write $\WD(r)$ for the corresponding Weil-Deligne representation of the Weil group $W_{K_v}$ of $K_v$ (see section 1 of \cite{TY}). A Weil-Deligne representation is denoted as $(r,V,N) = (r,N) = (V,N)$, where $V$ is a finite-dimensional vector space over $\overline{\bQ}_p$, $r: W_{F_v} \rightarrow \GL(V)$ is a representation with open kernel and $N: V \rightarrow V$ is a nilpotent endomorphism with $r(\sigma)Nr(\sigma)^{-1} = |\Art_{F_v}^{-1}(\sigma)|_{F_v} N$ (here, $\Art_{F_v}: F_v^\times \stackrel{\sim}{\longrightarrow} W_{F_v}^{ab}$ denotes the local Artin map, normalized as in \cite{TY}). We say $(r,V,N)$ is Frobenius semisimple if $r$ is semisimple. We denote the Frobenius semisimplification of $(r,V,N)$ by $(r,V,N)^{\Frob-ss}$ and the semisimplification of $(r,V,N)$ is $(r,V,N)^{ss} = (r^{ss},V,0)$ (see Section 1 of \cite{TY}). 

If $\pi$ is an irreducible smooth representation of $\GL_n(K_v)$ over $\bC$, we will write $\rec_{K_v}(\pi)$ for the Weil-Deligne representation of $W_{K_v}$ corresponding to $\pi$ by the local Langlands correspondence (see \cite{HT}). If $\pi_1$ and $\pi_2$ are irreducible smooth representations of $\GL_{n_1}(K_v)$ (resp. $\GL_{n_2}(K_v)$), then there is an irreducible smooth representation $\pi_1 \boxplus \pi_2$ of $\GL_{n_1 + n_2}(K_v)$ over $\bC$ satisfying
	$$\rec_{K_v}(\pi_1 \boxplus \pi_2) = \rec_{K_v}(\pi_1) \oplus \rec_{F_v}(\pi_2).$$
	
Let $G$ be a reductive group over $K_v$, and let $P$ be a parabolic subgroup of $G$ with unipotent radical $N$ and Levi $L$. For a smooth representation $\pi$ of $L(K_v)$ on a vector space $V_{\pi}$ over a field $\Omega$ of characteristic 0, we define
	$\Ind_{P(K_v)}^{G(K_v)} \pi$ to be the representation of $G(K_v)$ by right translation on the set of locally constant functions $\varphi: G(K_v) \rightarrow V_{\pi}$ such that $\varphi(hg) = \pi(h)\varphi(g)$ for all $h \in P(F_v)$ and $g \in G(K_v)$. When $\Omega = \bC$, define normalized induction as
		$$n-\Ind_{P(K_v)}^{G(K_v)} \pi = \Ind_{P(K_v)}^{G(K_v)} \pi \otimes |\det(\ad\left.(h)\right|_{\Lie N})|_v^{1/2}.$$

\section{Recollections}

We recall the setup of \cite{HLTT}, including the unitary similitude group, the Shimura variety (and various compactifications) associated to the unitary group, their integral models. This will allow us to define automorphic vector bundles defined on these integral models, whose global sections will be the space of classical and $p$-adic automorphic forms.

\subsection{Unitary Group}
We define an integral unitary similitude group, which is associated to the following data. If $\Psi_n$ denotes the $n \times n$ matrix with $1$'s on the anti-diagonal and $0$'s elsewhere then let $J_n$ denote the element of $\GL_{2n}(\bZ)$ 
	$$J_n = \left(\begin{array}{cc}0 & \Psi_n \\-\Psi_n & 0\end{array}\right).$$
Let $\cD_F^{-1}$ denote the inverse different of $\cO_F$, and define the $2n$-dimensional lattice $\Lambda = (\cD_F^{-1})^{n} \oplus \cO_F^n$. Let $G$ be the group scheme over $\bZ$ defined by
	$$G(R) = \{(g,\mu) \in \Aut_{\cO_F \otimes_{\bZ} R}(\Lambda \otimes_{\bZ} R) \times R^\times : {}^tg J_n{}^cg = \mu J_n\}$$
for any ring $R$. Over $\bZ[1/\Disc(F/\bQ)]$, it is a quasi-split connected reductive group which splits over $\cO_{F^{nc}}[1/\Disc(F/\bQ)]$ where $F^{nc}$ denotes the Galois closure of $F/\bQ$. Let $\nu: G \rightarrow \GL_1$ be the multiplier character sending $(g,\mu) \mapsto \mu$. 

If $R = \Omega$ is an algebraically closed field of characteristic 0, then 
	$$G \times \Spec \Omega \cong \{ (\mu,g_\tau) \in \bG_m \times \GL_{2n}^{\Hom(F,\Omega)} : g_{\tau c} = \mu J_n {}^tg_{\tau}^{-1} J_n \quad \forall \tau \in \Hom(F,\Omega)\}.$$
	
Fix the lattice $\Lambda_{(n)} \cong (\cD_F^{-1})^n$ consisting of elements of $\Lambda$ whose last $n$ coordinates are equal to 0, and define $\Lambda_{(n)}' \cong \cO_F^n$ consisting of elements of $\Lambda$ whose first $n$ coordinates are equal to $0$.  Let $P^+_{(n)}$ denote the subgroup of $G$ preserving $\Lambda_{(n)}$. Write $L_{(n),\lin}$ for the subgroup of $P_{(n)}^+$ consisting of elements with $\nu = 1$ which preserve $\Lambda_{(n)}'$, and write $L_{(n),\herm}$ for the subgroup of $P_{(n)}^+$ which act trivially on $\Lambda/\Lambda_{(n)}$ and preserve $\Lambda_{(n)}'$. Then $L_{(n),\lin} \cong \RS^{\cO_F}_{\bZ} \GL_n$ and $L_{(n),\herm} \cong \bG_m$, and we can define $L_{(n)} := L_{(n),\lin} \times L_{(n),\herm}$.

Finally, let $G(\bA^\infty)^{\ord,\times} := G(\bA^{p,\infty}) \times P^+_{(n)}(\bZ_p)$, and $G(\bA^{\infty})^{\ord} = G(\bA^{p,\infty}) \times \varsigma_p^{\bZ_{\geq 0}} P_{(n)}^+(\bZ_p),$ where $\varsigma_p \in L_{(n),\herm}(\bQ_p) \cong \bQ_p^\times$ denotes the unique element with multiplier $p^{-1}$.

\subsection{Level Structure}
If $N_2 \geq N_1 \geq 0$ are integers, then let $U_p(N_1,N_2)$ be the subgroup of elements of $G(\bZ_p)$ which $\mod p^{N_2}$ lie in $P^+_{(n)}(\bZ/p^{N_2}\bZ)$ and map to 1 in $L_{(n),\lin}(\bZ/p^{N_1}\bZ)$. If $U^p$ is an open compact subgroup of $G(\bA^{p,\infty})$ we write $U^p(N_1, N_2)$ for $U^p \times U_p(N_1,N_2)$. 

If $N \geq 0$ is an integer we write $U_p(N)$ for the kernel of the map $P^{+}_{(n)}(\bZ_p) \rightarrow L_{(n),\lin}(\bZ/p^N\bZ)$. $U_p(N)$ will also denote the image of this kernel inside $L_{(n),\lin}(\bZ_p)$. 

\subsection{Shimura Variety}

Fix a neat open compact subgroup $U$ (as defined in section 0.6 of \cite{Pi}), and let $S$ be a locally noetherian scheme over $\bQ$. Recall from \S3.1 in \cite{HLTT} that a polarized $G$-abelian scheme with $U$-level structure is an abelian scheme $A$ over $S$ of relative dimension $n[F:\bQ]$ along with the following data:
\begin{itemize}
\item An embedding $\imath: F \hookrightarrow \End^0(A)$ such that $\Lie A$ is a locally free of rank $n$ over $F \otimes_{\bQ} \cO_S$. 
\item A polarization $\lambda: A \rightarrow A^\vee$ 
\item $U$-level structure $[\eta]$.
\end{itemize} 
For more precise definitions, see Pg. 76-77 of \cite{HLTT}. Denote by $X_{U}$ the smooth quasi-projective scheme over $\bQ$ which represents the functor that sends a locally noetherian scheme $S/\bQ$ to the set of quasi-isogeny classes of polarized $G$-abelian schemes with $U$-level structure. Let $[(A^{\univ},\imath^{\univ},\lambda^{\univ},[\eta^{\univ}])]$ denote the universal equivalence class of polarized $G$-abelian varieties with $U$-level structure. Allowing $U$ to vary, the inverse system $\{X_{U}\}$ has a right $G(\bA^\infty)$ action, with finite etale transition maps $g: X_{U} \rightarrow X_{U'}$ whenever $U' \supset g^{-1}Ug$. 

For each $U$, denote by $\Omega^1_{A^{\univ}/X_{U}}$ the sheaf of relative differentials on $A^{\univ}$. Let $\Omega_U$ denote the Hodge bundle, i.e. the pullback by the identity section of $\Omega^1_{A^{\univ}/X_U}$. It is locally free of rank $n[F:\bQ]$ and does not depend on $A^{\univ}$.

For each neat open compact subgroup $U \subset G(\bA^\infty)$, there is a normal projective scheme $X^{\min}_{U}$ over $\Spec \bQ$ together with a $G(\bA^\infty)$-equivariant dense open embedding
	$$j_U: X_U \hookrightarrow X^{\min}_U$$
which is known as the minimal compactification of $X_U$. Let the boundary be denoted by $\del X^{\min}_U = X^{\min}_U - X_{U}$. The inverse system $\{X^{\min}_U\}$ also has a right $G(\bA^\infty)$-action. Furthermore, there is a normal projective flat $\bZ_{(p)}$ scheme $\cX^{\min}_{U}$ whose generic fiber is $X^{\min}_U$. We will denote the ample line bundle on $\cX^{\min}_{U}$ by $\omega_U$; by Propositions 2.2.1.2 and 2.2.3.1 in \cite{La4}, its pullback to $X_U$ is identified with $\wedge^{n[F:\bQ]}\Omega_U$. The system $\{\omega_U\}$ over $\{\cX_{U}^{\min}\}$ has an action of $G(\bA^{p,\infty} \times \bZ_p)$. If we let $\overline{X}_U^{\min} = \cX_U^{\min} \otimes_{\bZ_{(p)}} \bF_p$, then by Corollaries 6.3.1.7-8 in \cite{La4}, there is a canonical $G(\bA^{p,\infty})$-invariant section
$$\Hasse_U \in H^0(\overline{X}_U^{\min}, \omega_U^{\otimes (p-1)}) \quad \mbox{s.t.} \quad g^\ast \Hasse_{g^{-1}Ug} = \Hasse_U \quad \forall g \in G(\bA^{p,\infty} \times \bZ_p).$$

Denote by $\overline{X}^{\min n-\ord}_U$ the zero locus in $\overline{X}^{\min}_U$ of $\Hasse_U$. 
\begin{lemma}\label{affine} The non-zero locus $\overline{X}^{\min}_U \bs \overline{X}^{\min n-\ord}_U$ is relatively affine over $\overline{X}^{\min}_U$.
Furthermore, it is affine over $\bF_p$.
\end{lemma}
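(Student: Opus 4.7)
The complement $\overline{X}^{\min}_U \bs \overline{X}^{\min n-\ord}_U$ is by definition the non-vanishing locus of the section $\Hasse_U$ of the line bundle $\omega_U^{\otimes(p-1)}$ on the projective $\bF_p$-scheme $\overline{X}^{\min}_U$, where $\omega_U$ is ample. My plan is to reduce both assertions to standard properties of non-vanishing loci of sections of line bundles on (projective) schemes.

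For relative affineness, I would argue locally on $\overline{X}^{\min}_U$. On any affine open $\Spec A \subset \overline{X}^{\min}_U$ over which $\omega_U^{\otimes(p-1)}$ is trivial, the section $\Hasse_U$ corresponds to some element $f \in A$, and its non-vanishing locus in $\Spec A$ is the principal open $\Spec A_f$, which is affine. Since affineness of a morphism is local on the target, the inclusion of the non-vanishing locus into $\overline{X}^{\min}_U$ is then an affine morphism.

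For absolute affineness, I would combine the ampleness of $\omega_U$ with the projectivity of $\overline{X}^{\min}_U$ over $\bF_p$. Choose $k$ large enough that $\omega_U^{\otimes k(p-1)}$ is very ample, and then select global sections $t_0 = \Hasse_U^k, t_1, \ldots, t_N$ with empty common zero locus that together define a closed immersion $\iota: \overline{X}^{\min}_U \hookrightarrow \bP^N_{\bF_p}$. Under $\iota$, the non-vanishing locus of $\Hasse_U$ coincides with $\iota^{-1}$ of the standard affine chart $\{x_0 \neq 0\} \cong \bA^N_{\bF_p}$, so it is a closed subscheme of an affine scheme, hence affine. There is no serious obstacle: the only input specific to the paper's setup is the ampleness of $\omega_U$ recorded in the excerpt, together with the mild observation that one is free to include the particular section $\Hasse_U^k$ among the basis sections defining the projective embedding.
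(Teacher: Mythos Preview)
Your argument is correct. Both claims are standard facts about non-vanishing loci of sections of (ample) line bundles, and your local-trivialization argument for relative affineness together with the projective-embedding argument for absolute affineness are perfectly valid.

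The paper takes a slightly different, more algebraic route: it identifies the non-vanishing locus explicitly as the relative $\Spec$ of the quasi-coherent $\cO_{\overline{X}^{\min}_U}$-algebra
\[
\Bigl(\bigoplus_{i \geq 0} \omega_U^{\otimes (p-1)ai}\Bigr)\big/(\Hasse_U^a - 1),
\]
and, using ampleness of $\omega_U$ to recover $\overline{X}^{\min}_U$ as $\operatorname{Proj}$ of its section ring, as the absolute $\Spec$ of
\[
\Bigl(\bigoplus_{i \geq 0} H^0(\overline{X}^{\min}_U,\omega_U^{\otimes (p-1)ai})\Bigr)\big/(\Hasse_U^a - 1).
\]
The underlying mathematics is the same as yours (your affine chart $\{x_0 \neq 0\}$ is exactly this $\Spec$), but the paper's formulation has the advantage of naming the coordinate ring explicitly. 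This explicit description is what feeds into the density statement (Lemma~\ref{density}): knowing that functions on the ordinary locus are generated by global sections of powers of $\omega_U$ with $\Hasse_U$ inverted is precisely what allows one to lift mod-$p^M$ sections to classical forms of varying weight. Your proof establishes the lemma as stated, but if you were writing this up you might still want to record the algebra description for later use.
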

\begin{proof}
The nonzero locus over $\overline{X}^{\min}_U$ is associated to the sheaf of algebras $$\left(\oplus_{i=0}^\infty \omega_U^{\otimes (p-1)ai}\right)/(\Hasse^a_U - 1) \quad \forall a \in \bZ_{>0},$$
Over $\bF_p$, it is associated to the algebra
	$$\left(\oplus_{i=0}^\infty H^0(\overline{X}^{\min}_U,\omega^{\otimes(p-1)ai})\right)/(\Hasse_U^a - 1) \quad \forall a \in \bZ_{>0}.$$
\end{proof}

\subsection{Ordinary Locus}
Now let $\cS$ denote a scheme over $\bZ_{(p)}$ and fix a neat open compact subgroup $U^p$ along with two positive integers $N_2 \geq N_1$. Then the ordinary locus is a smooth quasi-projective scheme $\cX^{\ord}_{U^p(N_1,N_2)}$ over $\bZ_{(p)}$ representing the functor which sends $\cS$ to the the set of prime-to-$p$ quasi-isogeny classes of ordinary, prime-to-p quasi-polarized $G$-abelian schemes with $U^p(N_1,N_2)$-level structure as defined in \S3.1 of \cite{HLTT}. It is a partial integral model of $X_{U^p(N_1,N_2)}$. Let $[\cA^{\univ},\imath^{\univ},\lambda^{\univ},[\eta^{\univ}]]/\cX^{\ord}_{U^p(N_1,N_2)}$ denote the universal equivalence class of ordinary prime-to-$p$ quasi-polarized $G$-abelian schemes with $U^p(N_1,N_2)$-level structure up to quasi-isogeny. Finally denote by $\fX^{\ord}_{U^p(N_1,N_2)}$ the formal completion along the special fiber of $\cX^{\ord}_{U^p(N_1,N_2)}$, and $\overline{X}^{\ord}_{U^p(N_1,N_2)} = \cX^{\ord}_{U^p(N_1,N_2)} \otimes_{\bZ_{(p)}} \bF_p$. Both form inverse systems each with a right $G(\bA^\infty)^{\ord}$-action. Furthermore, the map
	$$\varsigma_p: \overline{X}^{\ord}_{U^p(N_1,N_2+1)} \rightarrow \overline{X}^{\ord}_{U^p(N_1,N_2)}$$
is the absolute Frobenius map composed with the forgetful map $\cX^{\ord}_{U^p(N_1,N_2+1)} \rightarrow \cX^{\ord}_{U^p(N_2,N_2)}$ for any $N_2 \geq N_1 \geq 0$. If $N_2 > 0$, then $\varsigma_p$ defines a finite flat map
	$$\varsigma_p: \cX^{\ord}_{U^p(N_1,N_2+1)} \rightarrow \cX^{\ord}_{U^p(N_1,N_2)}$$
with fibers of degree $p^{n^2[F^+:\bQ]}$ (see \S3.1 of \cite{HLTT}).

For each $U^p(N_1,N_2)$ such that $U^p$ is neat, there is a partial minimal compactification of the ordinary locus $\cX^{\ord}_{U^p(N_1,N_2)}$ denoted by $\cX^{\ord,\min}_{U^p(N_1,N_2)}$. By Theorem 6.2.1.1 in \cite{La4}, this compactification of the ordinary locus is a normal quasi-projective scheme over $\bZ_{(p)}$ together with a dense open $G(\bA^\infty)^{\ord}$-equivariant embedding
	$$j_{U^p(N_1,N_2)}: \cX^{\ord}_{U^p(N_1,N_2)} \hookrightarrow \cX^{\ord,\min}_{U^p(N_1,N_2)}.$$
Its generic fiber is $X^{\min}_{U^p(N_1,N_2)}$ but unlike $\cX^{\min}_{U^p(N_1,N_2)}$, it is not proper. Furthermore by Proposition 6.2.2.1 in \cite{La4}, the induced action of $g \in G(\bA^\infty)^{\ord}$ on $\{\cX^{\ord,\min}_{U^p(N_1,N_2)}\}$ is quasi-finite.
Write $\del\cX^{\ord,\min}_{U^p(N_1,N_2)} = \cX^{\ord,\min}_{U^p(N_1,N_2)} - \cX^{\ord}_{U^p(N_1,N_2)},$ for the boundary, and let $\fX^{\ord,\min}_{U^p(N_1,N_2)}$ be the formal completion along the special fiber of $\cX^{\ord,\min}_{U^p(N_1,N_2)}$.  Note that by Corollary 6.2.2.8 and Example 3.4.5.5 in \cite{La4}, the natural map
	$$\fX^{\ord,\min}_{U^p(N_1,N_2')} \isom \fX^{\ord,\min}_{U^p(N_1,N_2)}$$
is an isomorphism, and so we will drop $N_2$ from notation. Define $\overline{X}^{\ord,\min}_{U^p(N_1,N_2)} = \cX^{\ord,\min}_{U^p(N_1,N_2)} \otimes_{\bZ_{(p)}} \bF_p$.

For each $U^p(N_1,N_2)$ note that there is a $G(\bA^\infty)^{\ord,\times}$-equivariant open embeddings $\cX^{\ord,\min}_{U^p(N_1,N_2)} \hookrightarrow \cX^{\min}_{U^p(N_1,N_2)}$. This induces a map on the special fibers
	$$\overline{X}^{\ord,\min}_{U^p(N_1,N_2)} \hookrightarrow \overline{X}^{\min}_{U^p(N_1,N_2)} \bs \overline{X}^{\min,n-\ord}_{U^p(N_1,N_2)},$$
which is both an open and closed embedding by Proposition 6.3.2.2 of \cite{La4}.
Note that only when the level is prime-to-$p$ is the nonzero locus of $\Hasse_{U^p(N_1,N_2)}$ isomorphic to the special fiber of the minimally compactified ordinary locus. When $N_2 > 0$, the above map is not an isomorphism. 

\subsection{Toroidal compactifications}

We now introduce toroidal compactifications of $X_U$ and $\cX_{U^p(N_1,N_2)}$ which are parametrized by neat open compact subgroups of $G(\bA^\infty)$ and certain cone decompositions defined in \cite{La4} and \cite{HLTT}. Let $\cJ^{\tor}$ be the indexing set of pairs $(U,\Delta)$ defined in Proposition 7.1.1.21 in \cite{La4} or Pg. 156 in \cite{HLTT}, where $U$ is a neat open compact subgroup and $\Delta$ is a $U$-admissible cone decomposition as defined in \S5.2 \cite{HLTT}. We will not recall the definition here as it is not necessary for any argument. 

If $(U,\Delta) \in \cJ^{\tor}$ then by Theorem 1.3.3.15 of \cite{La4}, there is a smooth projective scheme $X_{U,\Delta}$ along with a dense open embedding
	$$j_{U,\Delta}: X_{U} \hookrightarrow X_{U,\Delta}$$
and a projection $\pi_{\tor/\min}: X_{U,\Delta} \rightarrow X^{\min}_U$ such that the following diagram commutes:
\begin{eqnarray*} X_U &\hookrightarrow& X_{U,\Sigma} \\
& \large \hooksearrow& \downarrow \\
& & X_U^{\min}.
\end{eqnarray*}
Furthermore, the boundary $\del X_{U,\Delta} = X_{U,\Delta} \bs j_{U,\Delta} X_{U,\Delta}$ is a divisor with simple normal crossings. The collection $\{X_{U,\Delta}\}_{\cJ^{\tor}}$ becomes a system of schemes with a right $G(\bA^\infty)$-action via the maps $\pi_{(U,\Delta)/(U',\Delta')}: X_{U,\Delta} \rightarrow X_{U',\Delta'}$ whenever $(U,\Delta) \geq (U',\Delta')$ (see Pg. 153 of \cite{HLTT} for the definition of $\geq$ in this context).

If $(U^p(N_1,N_2),\Delta) \in \cJ^{\tor}$, then by Theorem 7.1.4.1 of \cite{La4}, there is a smooth quasi-projective scheme $\cX^{\ord}_{U^p(N_1,N_2),\Delta}$ along with a dense open embedding
	$$j^{\ord}_{U^p(N_1,N_2),\Delta}: \cX^{\ord}_{U^p(N_1,N_2)}\hookrightarrow \cX^{\ord}_{U^p(N_1,N_2),\Delta}$$
and a projection
	$$\pi^{\ord}_{\tor/\min}: \cX^{\ord}_{U^p(N_1,N_2),\Delta} \rightarrow \cX^{\ord,\min}_{U^p(N_1,N_2)}$$
such that the following diagram commutes:
\begin{eqnarray*} \cX^{\ord}_{U^p(N_1,N_2)} &\hookrightarrow& \cX^{\ord}_{U^p(N_1,N_2),\Delta} \\
& \large \hooksearrow& \downarrow \\
& & \cX^{\ord,\min}_{U^p(N_1,N_2)}.
\end{eqnarray*}

Furthermore, the boundary $\del \cX^{\ord}_{U^p(N_1,N_2),\Delta} = \cX^{\ord}_{U^p(N_1,N_2),\Delta} \backslash j^{\ord}_{U^p(N_1,N_2),\Delta} \cX^{\ord}_{U^p(N_1,N_2),\Delta}$ is a divisor with simple normal crossings.  The collection $\{\cX^{\ord}_{U^p(N_1,N_2),\Delta}\}_{\cJ^{\tor}}$ becomes a system of schemes with a right $G(\bA^\infty)^{\ord}$-action via the maps $\pi_{(U^p(N_1,N_2),\Delta)/(U^{p'}(N_1',N_2'),\Delta')}: \cX^{\ord}_{U^p(N_1,N_2),\Delta} \rightarrow \cX^{\ord}_{U^{p'}(N_1',N_2),\Delta'}$ whenever $(U^p(N_1,N_2),\Delta) \geq (U^{p'}(N_1',N_2),\Delta')$ (see Pg. 153 of \cite{HLTT} for the definition of $\geq$ in this context).

\section{Automorphic Bundles}

We first define the coherent sheaves on $\cX^{\min}$ whose global sections are what we consider to be the finite part of classical cuspidal automorphic forms on $G$. They are locally free sheaves originally defined over the toroidal compactifications $X_{U,\Delta}$ and are then pushed forward to $X^{\min}$ via $\pi_{\tor,\min}$. We start by recalling some differential sheaves that have already been defined.

\subsection{Automorphic Bundles on compactifications of the Shimura Variety}

Recall from the previous section that we have a locally free sheaf $\Omega_U$ on $X_U$, which is the pullback by the identity  section of the sheaf of relative differentials from $A^{\univ}$, the universal abelian variety over $X_U$. On $\cX^{\min}_U$, the normal integral model of the minimal compactification of $X_U$, there is an ample line bundle $\omega_U$ whose pullback to $X_U$ is identified with $\wedge^{n[F:\bQ]} \Omega_U$.

Any universal abelian variety $A^{\univ}/X_{U}$ extends to a semi-abelian variety $A_\Delta/X_{U,\Delta}$ (see remarks 1.1.2.1 and 1.3.1.4 of \cite{La4}). Define $\Omega_{U,\Delta}$ as the pullback by the identity section of the sheaf of relative differentials on $A_{\Delta}$. Note that when restricting to the Shimura variety $X_U$, $\left.\Omega_{U,\Delta}\right|_{X_U}$ is canonically isomorphic to $\Omega_U$. Let $\cO_{X_{U,\Delta}}(||\nu||)$ denote the structure sheaf with $G(\bA^\infty)$-action twisted by $||\nu||$.

Let $\cE^{\can}_{U,\Delta}$ denote the principal $L_{(n)}$-bundle on $X_{U,\Delta}$ defined as follows: For a Zariski open $W$, $\cE^{\can}_{U,\Delta}(W)$ is the set of pairs of isomorphisms
	$$\xi_0: \left.\cO_{X_{U,\Delta}}(||\nu||)\right|_W \isom \cO_W \qquad \mbox{and} \qquad \xi_1: \Omega_{U,\Delta} \isom \Hom_{\bQ}(V/V_{(n)},\cO_W),$$
where $V = \Lambda \otimes \bQ = F^{2n}$ and $V_{(n)} = \Lambda_{(n)} \otimes \bQ \cong F^{n}$. There is an action of $h \in L_{(n)}$ on $\cE^{\can}_{U,\Delta}$ by
	$$h(\xi_0,\xi_1) = (\nu(h)^{-1} \xi_0, \xi_1 \circ h^{-1}).$$
The inverse system $\{\cE^{\can}_{U,\Delta}\}$ has an action of $G(\bA^\infty)$. 

Let $R$ be any irreducible noetherian $\bQ$-algebra. Fix a representation $\rho$ of $L_{(n)}$ on a finite, locally free $R$-module $W_{\rho}$. Define the locally free sheaf $\cE^{\can}_{U,\Delta,\rho}$ over $X_{U,\Delta} \times \Spec R$ as follows: For a Zariski open $W$, let $\cE^{\can}_{U,\Delta,\rho}(W)$ be the set of $L_{(n)}(\cO_W)$-equivariant maps of Zariski sheaves of sets
	$$\left.\cE^{\can}_{U,\Delta}\right|_W \rightarrow W_\rho \otimes_R \cO_W.$$
With fixed $\rho$, the system of sheaves $\{\cE^{\can}_{U,\Delta,\rho}\}$ has a $G(\bA^\infty)$-action. If $\Std$ denotes the representation over $\bZ$ of $L_{(n)}$ on $\Lambda/\Lambda_{(n)}$, then let $\omega_{U,\Delta} := \cE^{\can}_{U,\Delta,\wedge^{n[F:\bQ]}\Std^\vee}.$
We will write $\cI_{\del X_{U,\Delta}}$ for the ideal sheaf in $\cO_{X_{U,\Delta}}$ defining the boundary $\del X_{U,\Delta}$. Define the subcanonical extension
	$$\cE^{\sub}_{U,\Delta,\rho}  = \cE^{\can}_{U,\Delta,\rho} \otimes \cI_{\del X_{U,\Delta}}$$

Recall the projection $\pi_{\tor/\min}: X_{U,\Delta} \rightarrow X^{\min}_U$, and define $\cE^{\sub}_{U,\rho} = \pi_{\tor/\min\ast} \cE^{\sub}_{U,\Delta,\rho}.$  The coherent sheaves defined on $X^{\min}_U$ are independent of the choice of $\Delta$. If we fix $\rho$, there is an action of $G(\bA^\infty)$ on the system $\{\cE^{\sub}_{U,\rho}\}$ indexed by neat open compact subgroups.
 
Now let $\rho_0$ be a representation of $L_{(n)}$ on a finite locally free $\bZ_{(p)}$-module. By Definition 8.3.5.1 of \cite{La4}, there is a system of coherent sheaves associated to $\rho_0$ over $\{\cX^{\min}_U\}$ with $G(\bA^\infty)^{\ord,\times}$-action whose pull-back to $\{X^{\min}_U\}$ is $G(\bA^\infty)$-equivariantly identified with $\{\cE^{\sub}_{U,\rho_0 \otimes \bQ}\}$. We will also refer to these sheaves by $\cE^{\sub}_{U,\rho_0}$. Note that over $\cX^{\min}_U$,
	$$\cE^{\sub}_{U,\rho_0} \otimes \omega_{U} \cong \cE^{\sub}_{U,\rho_0 \otimes (\wedge^{n[F:\bQ]} \Std^\vee)},$$
where $\omega_U$ denotes the ample line bundle defined on $\cX^{\min}_U$.
\subsection{Automorphic Bundles on the Ordinary Locus}
We now define automorphic vector bundles on the system of integral models of the minimally compactified ordinary locus $\{\cX^{\ord,\min}_{U^p(N_1,N_2)}\}$ as well as its formal completion along the special fiber $\{\fX^{\ord,\min}_{U^p(N_1)}\}$. The global sections of these coherent sheaves will consist of what we consider cuspidal $p$-adic automorphic forms. We first recall some definitions of sheaves defined on $\cX^{\ord}_{U^p(N_1,N_2),\Delta}$.

Any universal abelian variety $\cA^{\univ}/\cX^{\ord}_{U^p(N_1,N_2)}$ extends uniquely to a semi-abelian variety $\cA_\Delta/\cX^{\ord}_{U^p(N_1,N_2),\Delta}$~by~Remarks 1.1.2.1 and 1.3.1.4 of \cite{La4}. Define $\Omega^{\ord}_{U^p(N_1,N_2),\Delta}$ as the pullback by the identity section of the sheaf of relative differentials on $\cA_{\Delta}$. The inverse system $\{\Omega^{\ord}_{U^p(N_1,N_2),\Delta}\}$ has an action of $G(\bA^\infty)^{\ord,\times}$. There is also a natural map
	$$\varsigma_p: \varsigma_p^{\ast}\Omega^{\ord}_{U^p(N_1,N_2-1),\Delta} \rightarrow \Omega^{\ord}_{U^p(N_1,N_2),\Delta}.$$

Denote by $\cO_{\cX^{\ord}_{U^p(N_1,N_2),\Delta}}(||\nu||)$ the structure sheaf $\cO_{\cX^{\ord}_{U^p(N_1,N_2),\Delta}}$ with $G(\bA^\infty)^{\ord}$-action twisted by $||\nu||$ (recall that $\{\cX^{\ord}_{U^p(N_1,N_2),\Delta}\}$ has a right $G(\bA^\infty)^{\ord}$-action).

Let $\cE^{\ord,\can}_{U^p(N_1,N_2),\Delta}$ denote the principal $L_{(n)}$-bundle on $\cX^{\ord}_{U^p(N_1,N_2),\Delta}$ in the Zariski topology defined as follows: For a Zariski open $W$, $\cE^{\ord,\can}_{U^p(N_1,N_2),\Delta}(W))$ is the set of pairs of isomorphisms
	$$\xi_0:\left.\cO_{\cX^{\ord}_{U^p(N_1,N_2),\Delta}}(||\nu||)\right|_W \isom \cO_W \qquad \mbox{and} \qquad \xi_1:\Omega^{\ord}_{U^p(N_1,N_2),\Delta} \isom \Hom_{\bZ}(\Lambda/\Lambda_{(n)}, \cO_W).$$
(Recall that $\Lambda_{(n)}$ is the sublattice of $\Lambda = (\cD_F^{-1})^n \oplus \cO_F^n$ consisting of elements whose last $n$ coordinates are equal to $0$.)
There is an action of $h \in L_{(n)}$ on $\cE^{\ord,\can}_{U^p(N_1,N_2),\Delta}$ by
	$$h(\xi_0,\xi_1) = (\nu(h)^{-1} \xi_0, \xi_1 \circ h^{-1} ).$$
The inverse system $\{\cE^{\ord,\can}_{U^p(N_1,N_2),\Delta}\}$ has an action of $G(\bA^\infty)^{\ord}$.
Let $R$ be an irreducible noetherian $\bZ_{(p)}$-algebra. Fix a representation $\rho$ of $L_{n,(n)}$ on a finite, locally free $R$-module $W_\rho$. Denote the canonical extension to $\cX^{\ord}_{U^p(N_1,N_2),\Delta,\rho} \times \Spec R$ of the automorphic vector bundle on $\cX^{\ord}_{U^p(N_1,N_2)}$ associated to $\rho$ by $\cE^{\ord,\can}_{U^p(N_1,N_2),\Delta,\rho}$, which is defined as follows: For any Zariski open $W$, $\cE^{\ord,\can}_{U^p(N_1,N_2),\Delta,\rho}(W)$ is the set of $L_{(n)}(\cO_W)$-equivariant maps of Zariski sheaves of sets
	$$\left.\cE^{\ord,\can}_{U^p(N_1,N_2),\Delta}\right|_W \rightarrow W_\rho \otimes_{R} \cO_W.$$
When $\rho$ is fixed, the system of sheaves $\{\cE^{\ord,\can}_{U^p(N_1,N_2),\Delta,\rho}\}$ has an action of $G(\bA^\infty)^{\ord}$.  Furthermore, the inverse of $\varsigma_p^{\ast}$ gives a map
	$$(\varsigma^{\ast}_p)^{-1}:{\varsigma_p}_{\ast} \cE^{\ord,\can}_{U^p(N_1,N_2),\Delta,\rho} \stackrel{\sim}{\longrightarrow} \cE^{\ord,\can}_{U^p(N_1,N_2-1),\Delta,\rho} \otimes_{\cO_{\cX^{\ord}_{U^p(N_1,N_2-1),\Delta}}} {\varsigma_{p}}_{\ast}\cO_{\cX^{\ord}_{U^p(N_1,N_2),\Delta}}.$$
Composing $(\varsigma_p^{\ast})^{-1}$ with $1 \otimes \tr_{\varsigma_p}: \cE^{\ord,\can}_{U^p(N_1,N_2 -1),\Delta,\rho} \otimes {\varsigma_p}_{\ast} \cO_{\cX^{\ord}_{U^p(N_1,N_2),\Delta}} \rightarrow \cE^{\ord,\can}_{U^p(N_1,N_2 -1),\Delta,\rho}$ gives a $G(\bA^\infty)^{\ord,\times}$-equivariant map
	$$\tr_F: {\varsigma_p}_{\ast} \cE^{\ord,\can}_{U^p(N_1,N_2),\Delta,\rho} \rightarrow \cE^{\ord,\can}_{U^p(N_1,N_2-1),\Delta,\rho}$$
satisfying $\tr_F \circ \varsigma_p^{\ast} = p^{n^2[F^+:\bQ]}$.
If $\Std$ denotes the representation over $\bZ$ of $L_{(n)}$ on $\Lambda/\Lambda_{(n)}$, then let $\omega_{U^p(N_1,N_2),\Delta} := \cE^{\ord,\can}_{U^p(N_1,N_2),\Delta,\wedge^{n[F:\bQ]}\Std^\vee}$ denote the pullback of $\omega_{U}$ to $\cX^{\ord}_{U^p(N_1,N_2),\Delta}$.
We will write $\cI_{\del \cX^{\ord}_{U^p(N_1,N_2),\Delta}}$ for the ideal sheaf in $\cO_{\cX^{\ord}_{U^p(N_1,N_2),\Delta}}$ defining the boundary $\del \cX^{\ord}_{U^p(N_1,N_2),\Delta}$. Define the subcanonical extension as
	$$\cE^{\ord,\sub}_{U^p(N_1,N_2),\Delta,\rho} = \cE^{\ord,\can}_{U^p(N_1,N_2),\Delta,\rho} \otimes \cI_{\del \cX^{\ord}_{U^p(N_1,N_2),\Delta}}.$$
Again, the inverse of $\varsigma_p^{\ast}$ gives a map
	$$(\varsigma^{\ast}_p)^{-1}:{\varsigma_p}_{\ast} \cE^{\ord,\can}_{U^p(N_1,N_2),\Delta,\rho} \stackrel{\sim}{\longrightarrow} \cE^{\ord,\can}_{U^p(N_1,N_2-1),\Delta,\rho} \otimes_{\cO_{\cX^{\ord}_{U^p(N_1,N_2-1),\Delta}}} {\varsigma_{p}}_{\ast}\cI_{\del\cX^{\ord}_{U^p(N_1,N_2),\Delta}}.$$
Composing $(\varsigma_p^{\ast})^{-1}$ with $1 \otimes \tr_{\varsigma_p}: \cE^{\ord,\can}_{U^p(N_1,N_2-1),\Delta,\rho} \otimes {\varsigma_p}_{\ast}\cI_{\del \cX^{\ord}_{U^p(N_1,N_2),\Delta}} \rightarrow \cE^{\ord,\can}_{U^p(N_1,N_2-1),\Delta,\rho}$ gives another $G(\bA^\infty)^{\ord,\times}$-equivariant map
	$$\tr_F: {\varsigma_p}_{\ast}\cE^{\ord,\sub}_{U^p(N_1,N_2),\Delta,\rho} \rightarrow \cE^{\ord,\sub}_{U^p(N_1,N_2-1),\Delta,\rho}$$
satisfying $\tr_F \circ \varsigma_p^{\ast} = p^{n^2[F^+:\bQ]}$ and compatible with the analogous map defined on $\{\cE^{\ord,\can}_{U,\Delta,\rho}\}_{U}$.

Denote the pushforward by $\cE^{\ord,\sub}_{U^p(N_1,N_2),\rho} = \pi^{\ord}_{\tor/\min\ast} \cE^{\ord,\sub}_{U^p(N_1,N_2),\Delta,\rho}.$ These coherent sheaves defined on $\cX^{\ord,\min}_{U^p(N_1,N_2)}$ are independent of the choice of $\Delta$ by Proposition 1.4.3.1 and  Lemma 8.3.5.2 in \cite{La4}. Note that
	$$\cE^{\ord,\sub}_{U^p(N_1,N_2),\rho} \otimes \omega_{U^p(N_1,N_2)} \cong \cE^{\ord,\sub}_{U^p(N_1,N_2),\rho \otimes (\wedge^{n[F:\bQ]} \Std^\vee)},$$
and by Lemma 5.5 in \cite{HLTT}, the pullback of $\cE^{\sub}_{U^p(N_1,N_2),\rho}$ to $\cX^{\ord,\min}_{U^p(N_1,N_2),\rho}$ is $\cE^{\ord,\sub}_{U^p(N_1,N_2),\rho}.$

Abusing notation, denote the pullback of $\cE^{\ord,\sub}_{U^p(N_1,N_2),\rho}$ to $\fX^{\ord,\min}_{U^p(N_1)}$ by $\cE^{\ord,\sub}_{U^p(N_1),\rho}$. It is independent of $N_2$, and thus, $\tr_F$ induces a $G(\bA^\infty)^{\ord,\times}$-equivariant map
	$$\tr_F: {\varsigma_p}_{\ast}\cE^{\ord,\sub}_{U^p(N_1),\rho} \rightarrow \cE^{\ord,\sub}_{U^p(N_1),\rho}$$
over $\fX^{\ord,\min}_{U^p(N_1)}$, and also induces an endomorphism on global sections.
	
\section{Classical and $p$-adic automorphic forms}

Before we define cuspidal automorphic representations on $G(\bA^\infty)$, $L_{(n)}(\bA)$ and $\GL_m(\bA_F)$, we first recall some facts about highest weights of algebraic representations of $L_{(n)}$ and $G$.

\subsection{Weights}


For each integer $0 \leq i \leq n$, let $\Lambda_{(i)}$ denote the elements of $\Lambda$ for which the last $2n - i$ coordinates are zero, and let $B_n$ denote the Borel of $G$ preserving the chain $\Lambda_{(n)}\supset \Lambda_{(n-1)} \supset ... \supset \Lambda_{(0)}.$ Let $T_n$ denote the subgroup of diagonal matrices of $G$. 


Let $X^{\ast}(T_{n/\Omega}) := \Hom(T_n \times \Spec \Omega, \bG_m \times \Spec \Omega)$ and denote by $\Phi_n \subset X^{\ast}(T_{n/\Omega})$ the set of roots of $T_n$ on $\Lie G$. The subset of positive roots with respect to $B_n$ will be denoted $\Phi^+_n$ and $\Delta_n$ will denote the set of simple positive roots. For any ring $R\subset \bR$, let $X^\ast(T_{n/\Omega})^+_R$ will denote the subset of $R$-roots $X^\ast(T_n/\Omega) \otimes_{\bZ} R$ which pair non-negatively with the simple coroots $\check{\alpha} \in X_{\ast}(T_n/\Omega) = \Hom(\bG_m \times \Spec \Omega, T_n \times \Spec \Omega)$ corresponding to the elements of $\alpha \in \Delta_n$. 

Let $\Phi_{(n)} \subset \Phi_n$ denote the set of roots of $T_n$ on $\Lie L_{(n)}$ and set $\Phi_{(n)}^+ = \Phi_{(n)} \cap \Phi_n^+$ as well as $\Delta_{(n)} = \Delta_n \cap \Phi_{(n)}$. If $R \subset \bR$ is a ring, then $X^{\ast}(T_{n/\Omega})^+_{(n),R}$ will denote the subset of $X^{\ast}(T_{n/\Omega})_{(n)} \otimes_{\bZ} R$ consisting of elements which pair non-negatively with the simple coroot $\check{\alpha} \in X_{\ast}(T_{n/\Omega})_{(n)}$ corresponding to each $\alpha \in \Delta_{(n)}$.

Recall that $L_{(n)} \times \Spec \Omega \cong \GL_1 \times \GL_n^{\Hom(F,\Omega)}$, which induces an identification 
	$$T_n \times \Spec \Omega \cong \GL_1 \times (\GL_1^n)^{\Hom(F,\Omega)},$$ and hence $X^{\ast}(T_{n/\Omega}) \cong \bZ \bigoplus (\bZ^n)^{\Hom(F,\Omega)}.$ Under this isomorphism, the image of $X^{\ast}(T_{n/\Omega})^+_{(n)}$ is the set
	$$\{(b_0,(b_{\tau,i})) \in \bZ \oplus (\bZ^n)^{\Hom(F,\Omega)} : b_{\tau,1} \geq b_{\tau,2} \geq \hdots \geq b_{\tau,n} \quad \forall \tau\}$$
Furthermore, $X^{\ast}(T_{n/\Omega})^+$ is identified with
	$$\{(b_0,(b_{\tau,i})) \in \bZ \oplus (\bZ^n)^{\Hom(F,\Omega)} : b_{\tau,1} \geq b_{\tau,2} \geq \hdots \geq b_{\tau,n} \mbox{ and } b_{\tau,1} + b_{\tau c, 1} \leq 0 \quad \forall \tau\}$$
	
Denote by $\Std$ the representation of $L_{(n)}$ on $\Lambda/\Lambda_{(n)}$ over $\bZ$. Note that the representation $\wedge^{n[F:\bQ]} \Std^{\vee}$ is irreducible with highest weight $(0,(-1,\hdots, -1)_{\tau})$. If $\rho$ is an algebraic representation of $L_{(n)}$ over $\overline{\bQ}_p$, then its highest weight lies in $X^{\ast}(T_{n/\overline{\bQ}_p})_{(n)}^+$ and uniquely up to isomorphism identifies $\rho$. Thus, for any $\underline{b} \in X^{\ast}(T_{n/\overline{\bQ}_p})_{(n)}^+$, let $\rho_{\underline{b}}$ denote the $L_{(n)}$-representation over $\overline{\bQ}_p$ with highest weight $\underline{b}$. 

Define the set of \emph{classical} highest weights $X^{\ast}(T_{n/\overline{\bQ}_p})_{\cl}^+$ as any $\underline{b} = (b_0,(b_{\tau,i})_{\tau\in\Hom(F,\overline{\bQ}_p)}) \in X^{\ast}(T_{n/\overline{\bQ}_p})^+_{(n)}$ such that $b_{\tau,1} + b_{\tau c, 1} \leq -2n$. 

We next turn to local components of automorphic representations, i.e. smooth representations of $G(\bQ_\ell)$ when $\ell \neq p$. We relate them to smooth representations of $\GL_{2n}(\bQ_\ell)$ via local base change defined below. 

\subsection{Local Base Change}
For a rational prime $\ell \neq p$, denote the primes of $F^+$ above $\bQ$ as $u_1, \cdots, u_r, v_1 \cdots v_s$, where each $u_i = w_i{}^cw_i$ splits in $F$ and none of the $v_j$ split in $F$. Note that
	$$G(\bQ_q) \cong \prod_{i=1}^r \GL_{2n}(F_{w_i}) \times H \quad \mbox{where} \quad H = \left\{ (\mu,g_i) \in \bQ_\ell^\times \times \prod_{i=1}^s \GL_{2n}(F_{v_i}) : {}^tg_i J_n {}^c g_i = \mu J_n \quad \forall i\right\}.$$ 
Here, $H$ contains a product $\prod_{i=1}^s G^1(F_{v_i}^+),$ where $G^1$ denotes the group scheme over $\cO_{F^+}$ defined by
	$$G^1(R) = \{g \in \Aut_{\cO_F \otimes_{\cO_{F^+}} R}(\Lambda \otimes_{\cO_{F^+}} R) : {}^tg J_n {}^cg = J_n\}.$$
Note that $\ker \nu \cong \RS^{\cO_{F^+}}_{\bZ} G^1.$ If $\Pi$ is an irreducible smooth representation of $G(\bQ_\ell)$ then
	$$\Pi = \left(\otimes_{i=1}^r \Pi_{w_i}\right) \otimes \Pi_H.$$
Define $\BC(\Pi)_{w_i} := \Pi_{w_i}$ and $\BC(\Pi)_{cw_i} := \Pi_{w_i}^{c,\vee}$. This does not depend on the choice of $w_i$. We call $\Pi$ \emph{unramified} at $v_i$ if $v_i$ is unramified over $F^+$ and 
	$$\Pi^{G^1(\cO_{F^+,v_i})} \neq (0).$$

Let $B^1$ denote the Borel subgroup of $G^1$ consisting of upper triangular matrices and $T^1$ the torus subgroup consisting of diagonal matrices. 

If $\Pi$ is unramified at $v_i$ then there is a character $\chi$ of $T^1(F_{v_i}^+)/T^1(\cO_{F^+,v_i})$ such that $\left.\Pi\right|_{G^1(F_{v_i}^+)}$ and $n-\Ind_{B^1(F_{v_i}^+)}^{G^1(F_{v_i}^+)} \chi$ share an irreducible subquotient with a $G^1(\cO_{F^+,v_i})$-fixed vector. Define a map between the torus of diagonal matrices of $\GL_{2n}(F_{v_i})$ and $G^1(\cO_{F^+,v_i})$:
\begin{eqnarray}
\N: T_{\GL_{2n}}(F_{v_i}) &\rightarrow& T^1(F_{v_i}^+) \\
\left(\begin{array}{ccc}t_1 & 0 & 0 \\0 & \ddots & 0 \\0 & 0 & t_{2n}\end{array}\right) & \mapsto & \left(\begin{array}{ccc}t_1/{}^c t_{2n} & 0 & 0 \\0 & \ddots & 0 \\0 & 0 & t_{2n}/{}^ct_1\end{array}\right) 
\end{eqnarray}
We define $\BC(\Pi)_{v_i}$ to be the unique subquotient of
	$$n-\Ind_{B_{\GL_{2n}}(F_{v_i})}^{\GL_{2n}(F_{v_i})} \chi \circ N$$
with a $\GL_{2n}(\cO_{F,v_i})$-fixed vector, where $B_{\GL_{2n}}(F_{v_i})$ denote the Borel subgroup of upper triangular matrices. 

\begin{lemma}[Lemma 1.1 in \cite{HLTT}]
Suppose that $\psi \otimes \pi$ is an irreducible smooth representation of
	$$L_{(n)}(\bQ_q) \cong L_{(n),\herm}(\bQ_q) \times L_{(n),\lin}(\bQ_q) = \bQ_q^\times \times \GL_n(F_q).$$
\begin{enumerate}
\item If $v$ is unramified over $F^+$ and $\pi_v$ is unramified then $n-\Ind_{P_{(n)}(\bQ_q)}^{G(\bQ_q)}(\psi \otimes \pi)$ has a subquotient $\Pi$ which is unramified at $v$. Moreover $\BC(\Pi)_v$ is the unramified irreducible subquotient of $n-\Ind_{B_{\GL_{2n}}(F_v)}^{\GL_{2n}(F_v)} (\pi_v^{c,\vee} \otimes \pi_v)$.
\item If $v$ is split over $F^+$ and $\Pi$ is an irreducible subquotient of the normalized induction $n-\Ind_{P_{(n)}(\bQ_q)}^{G(\bQ_q)}(\psi \otimes \pi)$ then $\BC(\Pi)_v$ is an irreducible subquotient of $n-\Ind_{B_{\GL_{2n}}(F_v)}^{\GL_{2n}(F_v)} (\pi_{{}^cv}^{c,\vee} \otimes \pi_v)$.
\end{enumerate}
Note that in both cases $\BC(\Pi_v)$ does not depend on $v$.
\end{lemma}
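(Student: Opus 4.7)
Both parts reduce, using the direct product decomposition $G(\bQ_\ell) \cong \prod_i \GL_{2n}(F_{w_i}) \times H$, to a local computation at the factor associated to $v$: the split case is a bookkeeping argument with parabolic induction, and the inert case is a direct computation with Satake parameters.

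For part (2), at a prime $v$ of $F$ lying above a split prime of $F^+$, the projection $(g,\mu) \mapsto g_v$ identifies the direct factor at $v$ with $\GL_{2n}(F_v)$, sending $P_{(n)}$ to the $(n,n)$-block parabolic $P_{(n,n)}$.  The Levi $\GL_n(F_v) \times \GL_n(F_{{}^cv}) \subset L_{(n),\lin}(\bQ_\ell)$ maps to $\GL_n(F_v)^2$ by $(A,D) \mapsto \mathrm{diag}(A,\ \mu\Psi_n^{-1} c({}^tD)^{-1} \Psi_n)$, coming from the defining constraint ${}^tg J_n {}^c g = \mu J_n$; the multiplier $\mu$ is absorbed into the $H$-factor.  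A direct computation shows that the pullback of $\pi_{{}^cv}$ along $D \mapsto \Psi_n^{-1} c({}^tD)^{-1} \Psi_n$ is isomorphic to $\pi_{{}^cv}^{c,\vee}$, using that conjugation by $\Psi_n$ together with inverse-transpose is an automorphism of $\GL_n$ realizing the contragredient.  Hence the projected Levi representation on $\GL_n(F_v)^2$ is, up to a Weyl element swapping blocks, $\pi_{{}^cv}^{c,\vee} \otimes \pi_v$, and since parabolic induction commutes with direct-product decompositions, $\BC(\Pi)_v = \Pi_v$ is an irreducible subquotient of $n\text{-}\Ind_{P_{(n,n)}}^{\GL_{2n}(F_v)}(\pi_{{}^cv}^{c,\vee} \otimes \pi_v)$.

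For part (1), suppose $v$ is unramified over $F^+$ with $\pi_v$ unramified having Satake parameters $(\alpha_1,\ldots,\alpha_n)$, so that $\pi_v$ is the unramified subquotient of $n\text{-}\Ind_{B_{\GL_n}(F_v)}^{\GL_n(F_v)}\chi_\pi$ with $\chi_\pi: \mathrm{diag}(t_i) \mapsto \prod \alpha_i^{\ord_v(t_i)}$.  Induction by stages through $B^1 \subset P^1_{(n)} \subset G^1$ presents $n\text{-}\Ind_{P_{(n)}(F_v^+)}^{G^1(F_v^+)}(\pi_v)$ as a subquotient of an unramified principal series of $G^1(F_v^+)$ whose torus character $\chi$ on $T^1(F_v^+)$ is obtained by restricting $\chi_\pi$ along the natural embedding $T^1(F_v^+) \hookrightarrow T_{\GL_n}(F_v)$; this principal series has a unique unramified irreducible subquotient $\Pi$, giving the existence statement.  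For the identification of $\BC(\Pi)_v$, I compute $\chi\circ\mathrm{N}$ directly: for $\mathrm{diag}(t_1,\ldots,t_{2n})\in T_{\GL_{2n}}(F_v)$, $\mathrm{N}$ maps this to $\mathrm{diag}(t_i/{}^c t_{2n+1-i})$, and since $c$ preserves valuations (because $F_v/F_v^+$ is unramified), $\chi\circ\mathrm{N}$ is the unramified character with Satake parameters $(\alpha_1,\ldots,\alpha_n,\alpha_n^{-1},\ldots,\alpha_1^{-1})$.  These are a Weyl permutation of the parameters of the inducing data $\pi_v^{c,\vee} \otimes \pi_v$ (using that $\pi_v^c$ has the same parameters as $\pi_v$ because $c$ fixes a uniformizer in the unramified case, and that taking contragredients inverts parameters), so $\BC(\Pi)_v$ is the unramified subquotient of $n\text{-}\Ind_{B_{\GL_{2n}}(F_v)}^{\GL_{2n}(F_v)}(\pi_v^{c,\vee} \otimes \pi_v)$.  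The final independence assertion is then immediate from this uniform description.

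\textbf{Main obstacle.}  The most delicate technical step is the split-case identification of the projected Levi representation: tracking the constraint ${}^tg J_n {}^c g = \mu J_n$ through the direct product decomposition of $G(\bQ_\ell)$ and folding in the inverse-transpose and $\Psi_n$-conjugation to recognize $\pi_{{}^cv}^{c,\vee}$.  A secondary concern is verifying the compatibility of the $\delta_P^{1/2}$-normalizations of the various parabolic inductions in the inert case under the norm map, though these are dictated by the exponents of $\mathrm{N}$ and match up automatically.
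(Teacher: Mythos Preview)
The paper does not prove this lemma: it is quoted verbatim as Lemma~1.1 of \cite{HLTT} and used as a black box, so there is no in-paper argument to compare against.

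Your sketch is sound and is essentially the standard proof. Two small points to tighten. In part~(1) you silently pass from the induction on $G(\bQ_q)$ to one on $G^1(F_v^+)$; this is legitimate because ``unramified at $v$'' is defined via $G^1(\cO_{F^+,v})$-fixed vectors and $H$ contains $\prod_i G^1(F_{v_i}^+)$, but you should say a word about why restricting $\Pi_H$ to the $v$-factor and dropping $\psi$ and the multiplier loses nothing for the unramified subquotient. Second, in part~(1) you write $P_{(n)}(F_v^+)$ where you mean its intersection with $G^1$ (you do write $P^1_{(n)}$ once); keep the notation consistent. Your Satake computation $\chi\circ \mathrm{N}\leadsto(\alpha_1,\dots,\alpha_n,\alpha_n^{-1},\dots,\alpha_1^{-1})$ is correct, as is the identification of $\pi_v^{c,\vee}$ in the unramified case. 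The modulus-character compatibility you flag as a ``secondary concern'' is indeed automatic once you work with normalized induction throughout, since $\mathrm{N}$ intertwines the modulus characters.
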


\subsection{Cuspidal automorphic representations} 

Here, we define automorphic representations on $G(\bA)$ whose finite parts will be realized in the space of global sections of $\cE^{\sub}_{U,\rho}$ on $X^{\min}_{U,\rho}$. We first recall a few definitions. Let $U(n) \subset \GL_n(\bC)$ denote the subgroup of matrices $g$ satisfying ${}^tg{}^cg = 1_n$. Define 
	$$\cK_{n,\infty} = (U(n) \times U(n))^{\Hom(F^+,\bR)} \rtimes S_2 $$
where $S_2$ acts by permuting $U(n) \times U(n)$. We can embed $\cK_{n,\infty}$ in $G(\bR) \subset \R^\times \times \prod_{\tau \in \Hom(F^+,\R)} GL_{2n}(F \otimes_{F^+,\tau} \R)$ via the map sending
	$$(g_{\tau},h_{\tau})_{\tau \in\Hom(F^+,\bR)} \mapsto  \left( 1,  \mat{(g_\tau+h_\tau)/2}{(g_\tau-h_\tau)\Psi_n/2i}{\Psi_n(g_\tau-h_\tau)/2i}{\Psi_n(g_\tau+h_\tau)\Psi_n/2}_{\tau \in \Hom(F^+,\bR)} \right),$$
and sending the nontrivial element of $S_2$ to $\left(-1,\left(\begin{smallmatrix} -1_n & 0 \\ 0 & 1_n \end{smallmatrix}\right)_{\tau \in \Hom(F^+,\bR)}\right).$
This forces $\cK_{n,\infty}$ to be a maximal compact subgroup of $G(\bR)$ such that $\cK_{n,\infty} \cap P_{(n)}(\bR)$ is a maximal compact of $L_{(n)}(\bR)$. Let $\fg = (\Lie G_{(n)}(\bR))_{\bC}$, and denote by $A_n$ the image of $\bG_m$ in $G$ via the embedding $t \mapsto t \cdot 1_{2n}$. We define a {\em cuspidal automorphic representation of $G(\bA)$} to be the twist by a character of an irreducible admissible $G(\bA^\infty) \times (\fg, \cK_{n,\infty})$-submodule of the space of cuspidal automorphic forms on the double coset space $G(\bQ)\bs G(\bA)/A_n(\bR)^0$. Furthermore, a {\em square-integrable automorphic representation of $G(\bA)$} is the twist by a character of an irreducible admissible $G(\bA^\infty) \times (\fg,\cK_{n,\infty})$-module that occurs discretely in the space of square integrable automorphic forms on $G(\bA)$.

Now let $\fl = (\Lie L_{(n)}(\bR))_{\bC}$ and let $A_{(n)}$ denote the maximal split torus in the center of $L_{(n)}$. A {\em cuspidal automorphic representation of $L_{(n)}(\bA)$} is the twist by a character of an irreducible admissible $L_{(n)}(\bA^\infty) \times (\fl, \cK_{n,\infty} \cap L_{(n)}(\bR))$-submodule of the space of cuspidal automorphic forms of $L_{(n)}(\bA)$ on the double coset space $L_{(n)}(\bQ)\bs L_{(n)}(\bA)/A_{(n)}(\bR)^0$. Define a {\em square-integrable automorphic representation of $L_{(n)}(\bA)$} as the twist by a character of an irreducible admissible $L_{(n)}(\bA^\infty) \times (\fl,\cK_{n,\infty} \cap L_{(n)}(\bR))$-submodule that occurs discretely in the space of square integrable automorphic forms on $L_n(\bA)$.

For a number field $K$ and any positive integer $m$, let $\cK_{K,\infty}$ denote a maximal compact subgroup of $\GL_m(K_{\infty})$ and let $\fg\fl = (\Lie \GL_m(K_{\infty}))_{\bC}$. Define a cuspidal automorphic representation of $GL_m(\bA_K)$ as an irreducible admissible $\GL_m(\bA^\infty_F) \times (\fg\fl, \cK_{K,\infty})$-submodule of the space of cuspidal automorphic forms on the double coset space $\GL_m(K)\bs \GL_m(\bA_K)/\bR^\times_{>0}$. Finally, by a square-integrable automorphic representation of $\GL_m(\bA_K)$, we shall mean the twist by a character of an irreducible admissible $\GL_m(\bA_K^\infty) \times (\fg\fl,\cK_{K,\infty})$-module that occurs discretely in the space of square integrable automorphic forms on $\GL_m(\bA_K)$.

We will now relate the finite parts of these automorphic representations to the global sections of the automorphic bundles defined previously.

\subsection{Global sections of automorphic bundles over the Shimura variety}
Let $\rho$ be a representation of $L_{(n)}$ on a finite $\bQ$-vector space. Define the admissible $G(\bA^\infty)$-module 
	$$H^0(X^{\min},\cE^{\sub}_{\rho}) = \lim_{\stackrel{\rightarrow}{U}} H^0(X^{\min}_U,\cE^{\sub}_{U,\rho}).$$
Note that for any neat open compact $U$, $H^0(X^{\min},\cE^{\sub}_{\rho})^{U} = H^0(X^{\min}_U,\cE^{\sub}_{U,\rho})$.
	
\begin{proposition}[Corollary 5.12 in \cite{HLTT}]\label{classical} Suppose that
$\underline{b} \in X^{\ast}(T_{n/\overline{\bQ}_p})_{\cl}^+$, and $\rho_{\underline{b}}$ is the irreducible representation of $L_{(n)}$ with highest weight $\underline{b}$. Then $H^0(X^{\min}, \cE^{\sub}_{\rho_{\underline{b}}})$ is a semisimple $G(\bA^\infty)$ module, and if $\Pi$ is an irreducible subquotient of $H^0(X^{\min},\cE^{\sub}_{\rho_{\underline{b}}})$, then there is a continuous representation
	$$R_p(\Pi): G_F \rightarrow GL_{2n}(\overline{\bQ}_p)$$
which is de Rham above $p$ and has the following  property: Suppose that $v \nmid p$ is a prime of $F$ which is
\begin{itemize}
 \item either split over $F^+$,
 \item or inert but unramified over $F^+$ and $\Pi$ is unramified at $v$;
\end{itemize}
then $$\left.\WD(R_p(\Pi)\right|_{G_{F_v}})^{\Frob-ss} \cong \rec_{F_v}(\BC(\Pi_\ell)_v|\det|_v^{(1-2n)/2}),$$
where $q$ is the rational prime below $v$. 
\end{proposition}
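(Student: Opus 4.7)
The plan is to prove the statement in three stages, closely following the strategy established in the work being cited.

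\textbf{Stage 1: Identification of global sections with classical cusp forms.} First I would identify $H^0(X^{\min}, \cE^{\sub}_{\rho_{\underline{b}}})$ with a space of classical cuspidal automorphic forms on $G$ of weight $\underline{b}$. The construction of $\cE^{\sub}_{U,\rho}$ as $\pi_{\tor/\min\ast}(\cE^{\can}_{U,\Delta,\rho}\otimes \cI_{\del X_{U,\Delta}})$ means that the subcanonical twist by the ideal sheaf of the boundary enforces cuspidality, while the proper pushforward from a toroidal to the minimal compactification preserves global sections by the theorem on formal functions (and is independent of $\Delta$). Standard comparison theorems on PEL Shimura varieties (Harris, Milne) then identify $H^0(X^{\min}_U,\cE^{\sub}_{U,\rho_{\underline{b}}})\otimes_\bQ \bC$ with $(V_{\underline{b}}\otimes \cA_{\cusp}(G))^{U\times \cK_{n,\infty}}$ for the appropriate archimedean component of $\cA_{\cusp}$ singled out by $\underline{b}$. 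The requirement $\underline{b}\in X^\ast(T_{n/\overline{\bQ}_p})_{\cl}^+$, i.e.\ $b_{\tau,1}+b_{\tau c,1}\leq -2n$, is exactly the positivity/regularity condition that ensures the contributing $\Pi_\infty$ are actual discrete series with the correct cohomological type and that the coherent cohomology in degree $0$ sees precisely cusp forms.

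\textbf{Stage 2: Semisimplicity.} This step is immediate from Stage 1 together with classical automorphic theory: the space of cuspidal automorphic forms on $G(\bQ)\bs G(\bA)/A_n(\bR)^0$ decomposes discretely as a direct sum of irreducible $G(\bA^\infty)\times (\fg,\cK_{n,\infty})$-modules, and passing to the $(\fg,\cK_{n,\infty})$-isotypic component determined by $\underline{b}$ preserves this direct sum decomposition. Thus every irreducible subquotient $\Pi$ of $H^0(X^{\min},\cE^{\sub}_{\rho_{\underline{b}}})$ is the finite part $\Pi^\infty$ of an honest cuspidal automorphic representation of $G(\bA)$.

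\textbf{Stage 3: Construction of $R_p(\Pi)$ and local-global compatibility.} Given $\Pi$ cuspidal on $G$ of cohomological weight $\underline{b}$, apply (weak and strong) base change from the quasi-split unitary similitude group $G$ to $\GL_{2n}/F$. The result $\BC(\Pi)$ is an isobaric sum $\Pi_1\boxplus\cdots\boxplus\Pi_k$ of conjugate self-dual (relative to the similitude character) cuspidal automorphic representations of $\GL_{n_i}(\bA_F)$, each regular algebraic because $\underline{b}$ is classical. To each $\Pi_i$ the work of Shin, Chenevier--Harris, and Caraiani attaches a continuous Galois representation $R_{p,i}:G_F\to \GL_{n_i}(\overline{\bQ}_p)$ that is de Rham (with the correct Hodge--Tate weights determined by $\underline{b}$) above $p$ and satisfies the full Frobenius-semisimple local-global compatibility at all finite places $v\nmid p$. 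Setting $R_p(\Pi):=\bigoplus_i R_{p,i}$ and using additivity of the local Langlands correspondence under $\boxplus$ together with Lemma 4.2 to translate $\BC(\Pi)_v$ at split/inert-unramified places, one obtains
\[
\WD(R_p(\Pi)|_{G_{F_v}})^{\Frob\text{-ss}}\cong \rec_{F_v}(\BC(\Pi)_v|\det|_v^{(1-2n)/2}).
\]

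The main obstacle is Stage 3: the existence, de Rham property, and Frobenius-semisimple local-global compatibility for Galois representations attached to cohomological conjugate self-dual cuspidal representations of $\GL_{2n}/F$. The existence and compatibility at places split in $F/F^+$ rest on the construction via Igusa varieties and compact unitary Shimura varieties (Shin) plus Caraiani's analysis of the monodromy operator, while the inert-unramified case uses base change compatibility together with the unramified Langlands correspondence. All earlier steps (Stages 1 and 2) are, by contrast, formal consequences of the Shimura-variety formalism and the discreteness of the cuspidal spectrum.
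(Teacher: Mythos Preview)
Your proposal is correct and follows essentially the same three-stage approach as the paper: identify global sections with cohomological cusp forms on $G$ (the paper cites Lemma~5.11 of \cite{HLTT}), deduce semisimplicity, then base change to $\GL_{2n}/F$ and invoke the known Galois representations for the cuspidal constituents. One point to tighten in Stage~3: the paper is explicit (via \cite{Sh2} and Moeglin--Waldspurger \cite{MW}) that $\BC(\Pi)$ decomposes with possible Speh blocks, i.e.\ $2n=\sum m_in_i$ and $\BC(\Pi)_v=\boxplus_{i}\boxplus_{j=0}^{n_i-1}\tilde{\pi}_{i,v}|\det|_v^{(n_i-1)/2-j}$, so the cuspidal pieces $\tilde{\pi}_i$ on $\GL_{m_i}$ are only cohomological after the twist by $||\det||^{(m_i+n_i-1)/2}$ rather than regular algebraic outright; your phrasing ``each regular algebraic because $\underline{b}$ is classical'' elides this.
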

\begin{proof} 
Each irreducible subquotient $\Pi$ of $H^0(X^{\min},\cE^{\sub}_{\rho_{\underline{b}}})$ is the finite part of a cohomological cuspidal $G(\bA)$-automorphic representation $\pi$ by Lemma 5.11 in \cite{HLTT}, and furthermore, $H^0(X^{\min},\cE^{\sub}_{\rho_{\underline{b}}})$ is a semisimple $G(\bA^\infty)$-module. For such $\pi$, by \cite{Sh2,MW}, there is a decomposition into positive integers
	$$2n = m_1n_1 + \hdots + m_rn_r$$
and cuspidal conjugate self-dual automorphic representations $\tilde{\pi}_i$ of $\GL_{m_i}(\bA_F)$ such that for each $i \in [1,r]$,  $\tilde{\pi}_i ||\det||^{(m_i + n_i - 1)/2}$ is cohomological and satisfies the following at all primes $v$ of $F$ which are split over $F^+$
	$$\pi_v = \boxplus_{i=1}^r \boxplus_{j=0}^{n_i - 1} \tilde{\pi}_{i,v} |\det|_v^{(n_i-1)/2-j}.$$ 
These $\tilde{\pi}_i$ are automorphic representations which have Galois representations associated to them which satisfy full local-global compatibility by \cite{CH,Shin,Ca,BLGGT2,BLGHT} as described in \cite{BLGGT2}.
\end{proof}

We will refer to irreducible subquotients of $H^0(X^{\min},\cE^{\sub}_{\rho_{\overline{b}}})$ as classical cuspidal $G$-automorphic forms of weight $\rho_{\underline{b}}$.

\subsection{$p$-adic (cuspidal) $G$-automorphic forms}

Now let $\rho$ be a representation of $L_{(n)}$ on a finite locally free $\bZ_{(p)}$-module. Let $H^0(\fX^{\ord,\min},\cE^{\ord,\sub}_{\rho})$ denote the smooth $G(\bA^{\infty})^{\ord}$-module defined as
	$$H^0(\fX^{\ord,\min},\cE^{\ord,\sub}_{\rho}) := \displaystyle \lim_{\stackrel{\rightarrow}{U^p,N_1}} H^0(\fX^{\ord,\min}_{U^p(N_1)}, \cE^{\ord,\sub}_{U^p(N_1),\rho}).$$ 
For each positive integer $r$, define 
	$$H^0(\cX^{\ord,\min},\cE^{\ord,\sub}_{\rho} \otimes \bZ/p^r\bZ) := \displaystyle\lim_{\stackrel{\rightarrow}{U^p(N_1,N_2)}} H^0(\cX^{\ord,\min}_{U^p(N_1,N_2)},\cE^{\ord,\sub}_{U^p(N_1,N_2),\rho} \otimes \bZ/p^r\bZ).$$ It is a smooth $G(\bA^\infty)^{\ord}$-module of $p$-adic cuspidal $G$-automorphic forms of weight $\rho$, with the property that
	$$H^0(\cX^{\ord,\min},\cE^{\ord,\sub}_{\rho} \otimes \bZ/p^r\bZ)^{U^p(N_1,N_2)} = H^0(\cX^{\ord,\min}_{U^p(N_1,N_2)},\cE^{\ord,\sub}_{U^p(N_1,N_2),\rho} \otimes \bZ/p^r\bZ).$$
Note that mod $p^M$, there is a $G(\bA^\infty)^{\ord}$-equivariant embedding
	$$H^0(\fX^{\ord,\min},\cE^{\ord,\sub}_{\rho}) \otimes_{\bZ_p} \bZ/p^M \bZ \hookrightarrow H^0(\cX^{\ord,\min}, \cE^{\ord,\sub}_{\rho} \otimes \bZ/p^M \bZ).$$
	
Fix a neat open compact subgroup $U^p \subset G(\bA^{p,\infty})$ and integers $N_2 \geq N_1 \geq 0$, and recall that there is a canonical section $\Hasse_U \in H^0(\overline{X}^{\min}_U, \omega_U^{\otimes(p-1)})$ which is $G(\bA^{p,\infty} \times \bZ_p)$-invariant. Let $\widetilde{\Hasse}_U$ denote the noncanonical lift of $\Hasse_U$ over an open subset of $\cX^{\min}_U$. For each positive integer $M$, $\widetilde{\Hasse}_U^{p^{M-1}} \mod p^M$ glue to give a canonical $G(\bA^{\infty,p} \times \bZ_p)$-invariant section $\Hasse_{U,M}$ of $\omega^{\otimes(p-1)p^{M-1}}$ over $\cX^{\min}_U \times \Spec \bZ/p^M\bZ$.

Fix $\rho$ a representation of $L_{(n)}$ on a finite free $\bZ_{(p)}$-module then for each integer $i$, define the $G(\bA^\infty)^{\ord,\times}$-equivariant map,
\begin{eqnarray*}
H^0(\cX^{\min}_{U^p(N_1,N_2)}, \cE^{\sub}_{\rho} \otimes \omega_U^{ip^{M-1}(p-1)}) &\rightarrow& H^0(\cX^{\ord,\min}_{U^p(N_1,N_2)}, \cE^{\ord,\sub}_{\rho} \otimes \bZ/p^M\bZ) \\ f &\mapsto &(\left.f\right|_{\cX^{\ord,\min}_{U^p(N_1,N_2)}})/\Hasse^i_{U^p(N_1,N_2),M}
\end{eqnarray*}

Using the map defined above, the authors of \cite{HLTT} prove the following density theorem relating $p$-adic and classical cuspidal automorphic forms. 
	
\begin{lemma}[Lemma 6.1 in \cite{HLTT}]\label{density} Let $\rho$ be an irreducible representation of $L_{(n)}$ over $\bZ_{(p)}$. The induced map:
	$$\bigoplus_{j = r}^\infty H^0(\cX^{\min}_{U^p(N_1,N_2)},\cE^{\sub}_{U^p(N_1,N_2),\rho \otimes (\wedge^{n[F:\bQ]} \Std^{\vee})^{jp^{M-1}(p-1)}}) \rightarrow H^0(\cX^{\ord,\min}_{U^p(N_1,N_2),\rho}, \cE^{\sub,\ord}_{U^p(N_1,N_2),\rho}\otimes \bZ/p^M\bZ)$$
is surjective for any integer $r$.
\end{lemma}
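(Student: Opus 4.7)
The plan is to realize both sides of the map as (restrictions of) sections on an auxiliary affine scheme cut out by the Hasse lift inside $\cX^{\min}_{U^p(N_1,N_2)} \otimes \bZ/p^M\bZ$, and then read off the surjectivity from the standard description of sections on an affine. Since $\omega_{U^p(N_1,N_2)}$ is ample on the projective $\bZ_{(p)}$-scheme $\cX^{\min}_{U^p(N_1,N_2)}$, the section $\Hasse_{U^p(N_1,N_2),M}$ of the ample line bundle $\omega^{\otimes(p-1)p^{M-1}}$ on the proper $\bZ/p^M\bZ$-scheme $\cX^{\min}_{U^p(N_1,N_2)} \otimes \bZ/p^M\bZ$ has affine non-vanishing locus $D_+(\Hasse_{U^p(N_1,N_2),M})$. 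On the special fiber, $\overline{X}^{\ord,\min}_{U^p(N_1,N_2)}$ embeds as an open and closed subscheme of $D_+(\Hasse_{U^p(N_1,N_2)})$ (recalled in the Ordinary Locus subsection above). Since idempotents lift uniquely through the nilpotent ideal $(p)/(p^M)$, this open-closed decomposition extends to
$$D_+(\Hasse_{U^p(N_1,N_2),M}) = (\cX^{\ord,\min}_{U^p(N_1,N_2)} \otimes \bZ/p^M\bZ) \sqcup Z$$
for some complementary open-closed subscheme $Z$.

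Given $s \in H^0(\cX^{\ord,\min}_{U^p(N_1,N_2)}, \cE^{\ord,\sub}_{U^p(N_1,N_2),\rho} \otimes \bZ/p^M\bZ)$, first extend by zero across $Z$ to obtain $\tilde s \in H^0(D_+(\Hasse_{U^p(N_1,N_2),M}), \cE^{\sub}_{U^p(N_1,N_2),\rho})$. Because $D_+(\Hasse_{U^p(N_1,N_2),M})$ is the complement of an ample divisor in a proper scheme, this group of sections is computed as the direct limit (equivalently, degree-zero piece of the localization)
$$H^0\bigl(D_+(\Hasse_{U^p(N_1,N_2),M}), \cE^{\sub}_{U^p(N_1,N_2),\rho}\bigr) \;=\; \left(\bigoplus_{j \geq 0} H^0\bigl(\cX^{\min}_{U^p(N_1,N_2)}\otimes\bZ/p^M\bZ,\, \cE^{\sub}_{U^p(N_1,N_2),\rho \otimes \omega^{jp^{M-1}(p-1)}}\bigr)\right)\bigl[\Hasse_{U^p(N_1,N_2),M}^{-1}\bigr],$$
using the identification $\cE^{\sub}_{\rho} \otimes \omega^k \cong \cE^{\sub}_{\rho \otimes \omega^k}$. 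In particular, a single fraction suffices: write $\tilde s = \bar f / \Hasse_{U^p(N_1,N_2),M}^j$ for some $j \geq 0$ and some $\bar f$ in the $j$th summand above.

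It remains to (i) lift $\bar f$ from $\bZ/p^M\bZ$- to $\bZ_{(p)}$-coefficients and (ii) arrange $j \geq r$. For (i), Serre vanishing (valid since $\cX^{\min}_{U^p(N_1,N_2)}$ is proper over $\bZ_{(p)}$ and $\omega$ is ample) yields an integer $j_0$, depending on $\rho$ and $M$, such that $H^1(\cX^{\min}_{U^p(N_1,N_2)}, \cE^{\sub}_{U^p(N_1,N_2),\rho \otimes \omega^{jp^{M-1}(p-1)}}) = 0$ for all $j \geq j_0$; the long exact sequence attached to $0 \to \cF \xrightarrow{p^M} \cF \to \cF/p^M\cF \to 0$ then yields surjectivity of reduction mod $p^M$ on $H^0$. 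For (ii), the identity $\bar f / \Hasse_{U^p(N_1,N_2),M}^j = (\bar f \cdot \Hasse_{U^p(N_1,N_2),M}) / \Hasse_{U^p(N_1,N_2),M}^{j+1}$ leaves $\tilde s$ unchanged while shifting $(\bar f, j)$ to the next summand; iterating, we may assume $j \geq \max(r, j_0)$. Then choose an integral lift $f$ of $\bar f$, and the element of $\bigoplus_{j \geq r}$ which is $f$ in coordinate $j$ and $0$ elsewhere maps to $s$ under the asserted map.

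The main delicate point is the extension of the open-closed decomposition from characteristic $p$ to $\bZ/p^M\bZ$-coefficients, particularly in the regime $N_2 > 0$: here $\overline{X}^{\ord,\min}_{U^p(N_1,N_2)}$ is a proper open-closed subscheme of the Hasse non-vanishing locus rather than all of it, so the lifted $Z$ is genuinely nontrivial. One must verify that the idempotent cutting $\overline{X}^{\ord,\min}_{U^p(N_1,N_2)}$ out of $D_+(\Hasse_{U^p(N_1,N_2)})$ lifts through $(p)/(p^M)$, and that the resulting open-closed subscheme coincides scheme-theoretically with $\cX^{\ord,\min}_{U^p(N_1,N_2)} \otimes \bZ/p^M\bZ$, so that the extension-by-zero in the second paragraph produces a bona fide global section of $\cE^{\sub}_{U^p(N_1,N_2),\rho}$ on the affine $D_+(\Hasse_{U^p(N_1,N_2),M})$. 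This is standard (nilpotent thickenings do not change the topological space and idempotents lift), but is the one step that requires care rather than pure formalism.
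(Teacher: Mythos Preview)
Your proof is correct and follows essentially the same route as the paper's: use affineness of the Hasse non-vanishing locus (Lemma~\ref{affine}) to express sections on $\cX^{\ord,\min}\otimes\bZ/p^M\bZ$ as $f/\Hasse_{U^p(N_1,N_2),M}^{j}$, then use $H^1$-vanishing to lift $f$ to integral coefficients. The only minor difference is that the paper invokes the stronger vanishing $H^i(\cX^{\min}_{U^p(N_1,N_2)}, \cE^{\sub}_{U^p(N_1,N_2),\rho} \otimes \omega^{\otimes k}) = 0$ for all $i>0$ and $k\geq 0$ proved in \cite{HLTT}, whereas your Serre-vanishing-for-large-$j$ combined with the $\Hasse$-shifting trick is slightly more elementary and equally sufficient.
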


\begin{proof} This lemma follows from Lemma \ref{affine} and the vanishing of $H^i(\cX^{\min}_{U^p(N_1,N_2)}, \cE^{\sub}_{U^p(N_1,N_2),\rho} \otimes \omega_{U^p(N_1,N_2)}^{\otimes k})$ as well as $H^1(\cX^{\min}_U, \bigoplus_{j=0}^\infty \cE^{\sub}_{U,\rho} \otimes \omega_{U^p(N_1,N_2)}^{\otimes p^{M-1}(p-1)j} \otimes \bZ/p^M\bZ)$ for all $i > 0$ and $k \geq 0$. For the full proof, see \cite{HLTT}.
\end{proof}

%
%
\section{The $U_p$-operator and the Main Theorem of \cite{HLTT}}

The map $\tr_F: {\varsigma_p}_{\ast} \cE^{\sub}_{U^p(N_1),\rho} \rightarrow \cE^{\sub}_{U^p(N_1),\rho}$ over $\fX^{\ord,\min}_{U^p(N_1)}$ induces an endomorphism $U_p = \tr_F$ in the endomorphism algebra of $H^0(\fX^{\ord,\min}_{U^p(N_1)},\cE^{\ord,\sub}_{U^p(N_1),\rho})_{\overline{\bQ}_p} := H^0(\fX^{\ord,\min}_{U^p(N_1)},\cE^{\ord,\sub}_{U^p(N_1),\rho})\otimes \overline{\bQ}_p$ satisfying $U_p \circ \varsigma_p = p^{n^2[F^+:\bQ]}$. By \S6.4 of \cite{HLTT}, $H^0(\fX^{\ord,\min}_{U^p(N_1)},\cE^{\ord,\sub}_{U^p(N_1),\rho})_{\overline{\bQ}_p}$ admits a slope decomposition for $U_p$ in the sense of \S6.2 of \cite{HLTT}. This means that for each $a \in \bQ$, there is a $U_p$-preserving decomposition
	$$H^0(\fX^{\ord,\min}_{U^p(N_1)},\cE^{\ord,\sub}_{U^p(N_1),\rho})_{\overline{\bQ}_p, \leq a} \oplus H^0(\fX^{\ord,\min}_{U^p(N_1)},\cE^{\ord,\sub}_{U^p(N_1),\rho})_{\overline{\bQ}_p, > a} = H^0(\fX^{\ord,\min}_{U^p(N_1)},\cE^{\ord,\sub}_{U^p(N_1),\rho})_{\overline{\bQ}_p}$$
such that $H^0(\fX^{\ord,\min}_{U^p(N_1)},\cE^{\ord,\sub}_{U^p(N_1),\rho})_{\overline{\bQ}_p, \leq a}$ is finite dimensional and satisfies:
\begin{enumerate}
\item If the roots of $f(X) \in \overline{\bQ}_p[X]$ have $p$-adic valuation $\leq a$, then the endomorphism $f(U_p)$ restricts to an automorphism of $H^0(\fX^{\ord,\min}_{U^p(N_1)},\cE^{\ord,\sub}_{U^p(N_1),\rho})_{\overline{\bQ}_p, > a}$;
\item There is a nonzero polynomial $f(X) \in \overline{\bQ}_p[X]$ with slopes $\leq a$ (i.e. $f(x) \neq 0$ and every root of $f(x)$ has $p$-adic valuation at most equal to $a$) such that the endomorphism $f(U_p)$ restricts to $0$ on $H^0(\fX^{\ord,\min}_{U^p(N_1)},\cE^{\ord,\sub}_{U^p(N_1),\rho})_{\overline{\bQ}_p, \leq a}$
\end{enumerate}

Additionally, $H^0(\fX^{\ord,\min}_{U^p(N_1)},\cE^{\ord,\sub}_{U^p(N_1),\rho})_{\overline{\bQ}_p, \leq a}$ is $G(\bA^\infty)^{\ord,\times}$-invariant.
	
\begin{theorem} Suppose that $n > 1$ and that $\rho$ is an irreducible algebraic representation of $L_{(n),\lin}$ on a finite-dimensional $\overline{\bQ}_p$-vector space. Suppose also that $\pi$ is a cuspidal automorphic representation of $L_{(n),\lin}(\bA)$ so that $\pi_\infty$ has the same infinitesimal character as $\rho^{\vee}$ and that $\psi$ is a continuous character of $\bQ^\times \bs \bA^\times /\bR^\times_{>0}$ such that $\left. \psi\right|_{\bZ_p^\times} = 1$. Then for all $N \in \bZ_{>0}$ sufficiently large, there exists a representation $\rho(N,j)$ of $L_{(n)}$ over $\bZ_{(p)}$, $a(N,j) \in \bQ$, and an admissible representation $\Pi_j'$ of $H^0(\fX^{\ord,\min},\cE^{\sub}_{\rho(N,j)})_{\overline{\bQ}_p,\leq a(N,j)}$ such that every irreducible subquotient $\pi_j$ of $\Ind_{P_{(n)}^+(\bA^{p,\infty})}^{G(\bA^{p,\infty})} (\pi^\infty||\det||^N \times \psi^{\infty})$ is a subquotient of $\Pi_j'$.
\end{theorem}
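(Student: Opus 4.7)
The plan is to realize every subquotient of the parabolic induction as a constituent of the finite-slope ordinary $p$-adic cuspidal cohomology by building a suitable $p$-adic automorphic form via Eisenstein theory and the density/vanishing machinery surrounding Lemma \ref{affine} and Lemma \ref{density}.

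First, I would choose the weight. Since $\pi_\infty$ has infinitesimal character matching $\rho^\vee$ and $\psi$ is an algebraic character with $\left.\psi\right|_{\bZ_p^\times}=1$, one can, for all $N$ large enough, combine the twist $\rho\otimes\det^{-N}$ (the $L_{(n),\lin}$-component) with the infinity-type of $\psi$ (the $L_{(n),\herm}$-component) into an irreducible algebraic representation $\rho_0(N)$ of $L_{(n)}$ defined over $\bZ_{(p)}$. Set $\rho(N,j) := \rho_0(N)\otimes(\wedge^{n[F:\bQ]}\Std^\vee)^{jp^{M-1}(p-1)}$ for an auxiliary integer $M\geq 1$; the built-in $\omega^{jp^{M-1}(p-1)}$-twist is exactly the shift accommodated by the $\Hasse$-dividing construction preceding Lemma \ref{density}.

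Next, I would produce the required $p$-adic form. The convergent parabolic Eisenstein series built from $(\pi\otimes||\det||^N)\otimes\psi$ defines, via Franke's Eisenstein cohomology and the BGG transfer from Betti to coherent cohomology, a nonzero class $c$ in some $H^i(X^{\min},\cE^{\sub}_{\rho_0(N)\otimes\bQ})$ whose prime-to-$p$ finite part realizes $\Ind_{P_{(n)}^+(\bA^{p,\infty})}^{G(\bA^{p,\infty})}(\pi^\infty||\det||^N \times \psi^\infty)$. By Lemma \ref{affine}, the open complement of the $n$-ordinary locus in $\overline{X}^{\min}$ is affine, so the higher cohomology of $\cE^{\sub}_{\rho_0(N)}\otimes\omega^{k}$ vanishes modulo $p$ for all sufficiently large $k$. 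Multiplying by the lift $\widetilde{\Hasse}^{jp^{M-1}}$ and reducing modulo $p^M$ therefore transports $c$ into $H^0(\cX^{\ord,\min}_{U^p(N_1,N_2)},\cE^{\ord,\sub}_{U^p(N_1,N_2),\rho(N,j)}\otimes\bZ/p^M\bZ)$ for every $M$. Invoking Lemma \ref{density} and taking the inverse limit over $M$ lifts these to an element $v$ of $H^0(\fX^{\ord,\min},\cE^{\ord,\sub}_{\rho(N,j)})_{\overline{\bQ}_p}$ whose away-from-$p$ component is the required induction. The $U_p$-eigenvalue of $v$ is computable explicitly from the Langlands parameters of $\pi_p$ and $\psi_p$ together with the shift $j$, giving an $a(N,j)\in\bQ$ for which $v$ lies in the slope-$\leq a(N,j)$ subspace; take $\Pi_j'$ to be the admissible $G(\bA^\infty)^{\ord,\times}$-subrepresentation generated by $v$.

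The principal difficulty is the previous paragraph: because the Eisenstein contribution is not cuspidal in the classical sense, the naive image of $c$ in $H^0(X^{\min},\cE^{\sub}_{\rho_0(N)\otimes\bQ})$ vanishes and the class can only be accessed in higher cohomology. The affineness of the non-ordinary complement (Lemma \ref{affine}) is what permits migration from $H^i$ into $H^0$ of an ordinary-locus sheaf after twisting by $\Hasse$, but the interplay between BGG-type compatibility of the weight $\rho(N,j)$, its $\bZ_{(p)}$-integrality, and the requirement that the associated slope $a(N,j)$ be finite constitutes the core technical obstacle.
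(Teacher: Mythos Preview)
The paper's own proof is a one-line citation to several results in \cite{HLTT} (Corollary 1.13, Corollary 6.12, Corollary 6.17, Lemma 6.20, Corollary 6.25), so there is no self-contained argument here to compare against; what you are really sketching is the content of those cited results.  Your outline does capture the essential architecture of the HLTT argument---produce from $\pi$ a class in higher coherent cohomology of the minimal compactification, use the affineness of the ordinary locus (Lemma \ref{affine}) to push it into $H^0$ over $\fX^{\ord,\min}$, and then control the $U_p$-slope---and you correctly identify the central difficulty, namely that the Eisenstein contribution lives in degree $>0$.

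That said, two points in your mechanism are not quite the HLTT route.  First, HLTT does not go through Franke's Eisenstein cohomology and a Betti-to-coherent transfer; the argument is coherent-cohomological throughout.  The class attached to $\pi$ is produced geometrically from the description of the boundary strata of the toroidal and minimal compactifications: a cusp form on the Levi $L_{(n)}$ gives a section over the relevant boundary component, and one uses the excision/long exact sequence for $(\cX^{\min},\partial\cX^{\min})$ together with Lan's vanishing theorems to locate it in $H^i$ of a subcanonical sheaf.  Second, multiplying a class $c\in H^i$ by a power of $\widetilde{\Hasse}$ does not change cohomological degree, so this cannot be what ``transports $c$ into $H^0$.''  The actual degree collapse comes from the affineness: over the ordinary locus the higher coherent cohomology of the subcanonical sheaves vanishes, so the spectral sequence (or the comparison of $H^i$ on $\cX^{\min}$ with cohomology supported on the non-ordinary locus) forces the restriction of the class to be representable by an element of $H^0(\fX^{\ord,\min},\cE^{\ord,\sub})$.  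The Hasse invariant enters only afterwards, in the density step (Lemma \ref{density}), to approximate this $p$-adic form by classical forms of shifted weight.  Your slope computation and the definition of $\Pi_j'$ are in the right spirit, but getting a finite slope requires an additional argument (the Hida-type analysis of $\tr_F$ on the boundary contribution), which is the content of the later corollaries cited from \cite{HLTT}.
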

\begin{proof} Combine Corollary 1.13, Corollary 6.12, Corollary 6.17, Lemma 6.20, and Corollary 6.25 in \cite{HLTT}
\end{proof}

Our next step is to consider properties of the Galois representations associated to the irreducible $G(\bA^\infty)^{\ord,\times}$-subquotients of $H^0(\fX^{\min,\ord},\cE^{\sub}_{\rho})_{\overline{\bQ}_p,\leq a}$, as constructed in Corollary 6.13 in \cite{HLTT}. In order to prove local-global compatibility at all primes $\ell$ such that $\ell \neq p$, we strengthen the construction of Galois representations associated to irreducible admissible $G(\bA^\infty)^{\ord,\times}$-subquotients of $H^0(\fX^{\min,\ord},\cE^{\sub}_{\rho})_{\overline{\bQ}_p}$, i.e. Galois representations associated to $p$-adic cuspidal $G$-automorphic forms of weight $\rho$ (see Proposition 6.5 of \cite{HLTT}). These Galois representations are constructed using the following two facts we have already recalled:
\begin{enumerate}
\item Proposition \ref{classical}: Classical cuspidal $G$-automorphic forms of classical weight $\rho$ have Galois representations associated to them; furthermore, they satisfy full local-global compatibility at all primes $\ell$ such that $\ell \neq p$. 
\item Lemma \ref{density}: For any integer $M$, every $p$-adic cuspidal $G$-automorphic form of any regular weight $\rho$ ``is congruent mod $p^M$ to'' some classical cuspidal $G$ automorphic form of classical weight $\rho'$ which is of the form $\rho' = \rho \otimes (\wedge^{n[F:\bQ]} \Std^{\vee})^{(p-1)p^{M-1}j}$ for some integer $j$.
\end{enumerate}
To prove local-global compatibility when $\ell \neq p$, we will use these two results to reconstruct the Galois representations associated to $p$-adic cuspidal automorphic forms on $G$ of regular weight $\rho$, but we will consider the action of a larger Hecke algebra than in \cite{HLTT} on the $p$-adic automorphic spaces $H^0(\cX^{\ord,\min}_{U^p(N_1,N_2)},\cE^{\ord,\sub}_{\rho} \otimes \bZ/p^M\bZ)$ and $H^0(\fX^{\ord,\min}_{U^p(N_1)},\cE^{\ord,\sub}_{\rho})$ as well as the classical automorphic spaces $H^0(\cX^{\min}_{U^p(N_1,N_2)},\cE^{\sub}_{\rho})$. 

\section{Hecke Algebras away from $p$}

Let $S$ denote the set of ``bad'' rational primes consisting of $p$ and the primes where $F$ ramifies. Inside $S$, let $S_{\spl}$ denote the set of rational primes $\ell \in S \bs \{p\}$ such that there is at least one prime $v$ of $F$ above $\ell$ which splits over $F^+$. Finally, let $S^p = S \bs \{p\}$.

We will denote by $\cS^{\spl}$ the set of primes $v$ of $F$ such that $v \mid \ell \in S \bs\{p\}$ and $v$ is \emph{not} split over $F^+$. Finally, for each conjugate pair of primes $\{v,{}^cv\} \not\subset \cS^{\spl}$ which divide a rational prime $\ell \in S$, choose exactly one of $\{v,{}^cv\}$ to put into a set $\cS_{\spl}$ and the other in $\overline{\cS}_{\spl}$. Note that the disjoint union $\cS^{\spl} \sqcup \cS_{\spl} \sqcup \overline{\cS}_{\spl}$ contains all primes of $F$ above $S \bs \{p\}$. Finally, let $\cS^{\spl,+}$ denote the primes of $F^+$ below $\cS^{\spl}$.

For $\ell \in S_{\spl}$, identify $G(\bQ_\ell) \cong \prod_{v \in \cS_{\spl}} \GL_{2n}(F_v) \times H,$ where $$H = \left\{ (\mu,g_i) \in \bQ_\ell^\times \times \prod_{\cS^{\spl,+} \ni w \mid \ell} \GL_{2n}(F_{w}) : {}^tg_iJ_n{}^cg_i = \mu J_n \quad \forall i\right\}.$$

\subsection{At unramified primes}

We recall the definition of the unramified Hecke algebra as in \S6 in \cite{HLTT}.
Fix a neat open compact subgroup $U^p = G(\widehat{\bZ}^S) \times U_{S^p} \subset G(\bA^{p,\infty})$. Suppose that $v$ is a place of $F$ above a rational prime $\ell \notin S$ and let $i \in \bZ$.

By Satake, there is an element $T_v^{(i)} \in \bQ[G(\bZ_\ell)\bs G(\bQ_\ell)/G(\bZ_\ell)]$ such that if $\Pi_\ell$ is an unramified representation of $G(\bQ_\ell)$ and then its eigenvalue on $\Pi_\ell^{G(\bZ_\ell)}$ is equal to
	$$\tr \rec_{F_v}(\BC(\Pi_\ell)_v)|\det|_v^{(1-2n)/2}(\Frob_v^i).$$
If $v$ is an unramified prime of $F$ which splits over $F^+$, then we can write the Hecke operator $T_v^{(1)}$ as the double coset
	$$G(\bZ_\ell)\left(\begin{array}{cccc}1 & 0 & \cdots & 0 \\0 & \ddots & 0 & \vdots \\ \vdots & 0  & 1 & 0 \\0  & \hdots & 0 & \varpi_v\end{array}\right)G(\bZ_\ell)$$
where $\varpi_v$ denotes a uniformizer of $F_v$.

For each unramified prime $v$ of $F$ and each integer $i \in \bZ$, there exists an integer $d_v^{(i)} \in \bZ$ such that 
	$$d_{v}^{(i)} T_v^{(i)} \in \bZ[G(\bZ_\ell)\bs G(\bQ_\ell))\bs G(\bZ_\ell)].$$

Let $\cH^S_{\bZ_p} := \bZ_p[G(\widehat{\bZ}^S)\bs G(\bA^S)/G(\widehat(\bZ)^S)]$ denote the abstract unramified Hecke algebra. For any two integers $N_2 \geq N_1 \geq 0$ and let $\rho$ be a representation of $L_{(n)}$ over $\bZ_{(p)}$. The Hecke algebra $\cH^S_{\bZ_p}$ has an action on the classical and $p$-adic spaces $H^0(\cX^{\min}_{U^p(N_1,N_2)},\cE^{\sub}_{U^p(N_1,N_2),\rho})$, $H^0(\fX^{\ord,\min}_{U^p(N_1)},\cE^{\ord,\sub}_{U^p(N_1),\rho})$, and $H^0(\cX^{\ord,\min}_{U^p(N_1,N_2)},\cE^{\ord,\sub}_{U^p(N_1,N_2),\rho} \otimes \bZ/p^M\bZ)$ induced from the action of $G(\bA^S)$.  Denote by $\bT^S_{U^p(N_1,N_2),\rho}$ the image of $\cH^S_{\bZ_p}$ in the endomorphism algebra $\End_{\bZ_p}(H^0(\cX^{\min}_{U^p(N_1,N_2)},\cE^{\sub}_{U^p(N_1,N_2),\rho}))$. Furthermore, if $W \subset H^0(\fX^{\ord,\min}_{U^p(N_1)},\cE^{\ord,\sub}_{U^p(N_1),\rho})$ (resp. $W \subset H^0(\cX^{\ord,\min}_{U^p(N_1,N_2)},\cE^{\ord,\sub}_{U^p(N_1,N_2),\rho} \otimes \bZ/p^M\bZ)$) is a finitely-generated $\bZ_p$-submodule invariant under the action of the algebra $\cH^S_{\bZ_p}$, then let $\bT^{\ord,S}_{U^p(N_1,N_2),\rho}(W)$ (resp. $\bT^{\ord,S}_{U^p(N_1,N_2),\rho,M}(W)$) denote the image of $\cH^S_{\bZ_p}$ in $\End_{\bZ_p}(W)$. 

For each $v$, let $\tilde{T_v}^{(i)}$ denote the image of $d_{v}^{(i)} T_v^{(i)}$ in any $\cH^S_{\bZ_p}$-algebra $\bT$ via the canonical map $\cH^S_{\bZ_p} \rightarrow \bT$.

\subsection{At ramified primes which are split over $F^+$}\label{ramified}

Suppose that $v \in \cS_{\spl} \sqcup \overline{\cS}_{\spl}$ is a place of $F$ above a rational prime $\ell$, and let $\sigma_v$ denote an element of $W_{F_v}$, the Weil group of $F_v$. Let $\cB$ denote a fixed Bernstein component; it is a subcategory of the smooth representations of $\GL_{2n}(F_v)$. Every component $\cB$ is uniquely associated to an inertial equivalence class $(M,\omega)$, where $M$ denotes a Levi subgroup of $\GL_{2n}(F_v)$ and $\omega$ is a supercuspidal representation of $M$. (Recall that two inertial classes $(M,\omega)$ and $(M',\omega')$ are equivalent if there exists $g \in G$ and an unramified character $\chi$ of $M'$ such that $M = g^{-1}Mg$ and $\omega' = \chi \otimes \omega(g\cdot g^{-1}).$) Then, $\cB$ is defined to be the full subcategory of smooth representations of $\GL_{2n}(F_v)$ consisting of those representations all of whose irreducible subquotients have {\em inertial support} equivalent to $(M,\omega)$. This implies that there exists some $(M',\omega') \sim (M,\omega)$ such that $\pi$ occurs as a composition factor of the parabolic induction $\Ind_{P_M}^{\GL_{2n}(F_v)}(\omega')$ where $\omega'$ is an irreducible supercuspidal representation  and $P_M$ is a parabolic subgroup of $\GL_{2n}(F_v)$ with Levi $M$. 

Let $\fz_{\cB} = \fz_{[M,\omega]}$ denote the Bernstein center of $\cB$, which is the image under the idempotent $e_{\cB}$ associated to $\cB$ of $$\lim_{\stackrel{\longleftarrow}{ K}} \cZ(\bC[K\bs\GL_{2n}(F_v)/K]),$$ the inverse limit over open compact subgroup $K$ of the centers of the complex Hecke algebra for $\GL_{2n}(F_v).$ 

\begin{proposition}[Proposition 3.11 in \cite{C}] For an inertial equivalence class $[M,\omega]$, there is a representative $(M,\omega)$ which can be defined over $\overline{\bQ}$. Let $E \subset \overline{\bQ}$ denote a sufficiently large finite-degree normal field over which $\omega$, $\rec(\omega)$, $\cB_{[M,\omega]}$, $\fz_{[M,\omega]}$ are all defined over $E$. Let $E[\cB_{[M,\omega]}]$ denote the affine coordinate ring of the variety associated to $\cB_{[M,\omega]}$. Then there exists a unique pseudocharacter of dimension $2n$
	$$T^{\cB} = T^{[M,\omega]}: W_{F_v} \rightarrow E[\cB] = \fz_{\cB}$$
such that for all irreducible smooth representations $\pi$ of $\cB$ and $\sigma_v \in W_{F_v}$,
	$$T^{\cB}(\sigma_v)(\pi) = \tr \rec_{F_v}(\pi)(\sigma_v).$$
\end{proposition}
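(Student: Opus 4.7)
The plan is to exploit the classical description of the Bernstein center as the ring of regular functions on the Bernstein variety. After choosing a representative $(M,\omega)$ defined over $\overline{\bQ}$, one has $\fz_{[M,\omega]} \cong E[\Omega_{[M,\omega]}]$, where $\Omega_{[M,\omega]} = \Psi(M)/W_{[M,\omega]}$, $\Psi(M)$ is the torus of unramified characters of $M$, and $W_{[M,\omega]} \subset N_{\GL_{2n}(F_v)}(M)/M$ is the finite group stabilizing $\omega$ up to unramified twist. I would construct $T^{\cB}(\sigma)$ as a regular function on this variety by evaluating the trace of the Langlands parameter attached to parabolic induction, and then verify it has the required properties.

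Concretely, decompose $M \cong \prod_k \GL_{m_k}(F_v)$ and $\omega \cong \boxtimes_k \omega_k$ with each $\omega_k$ supercuspidal, and parametrize unramified twists as tuples $(\chi_k)_k$, where each $\chi_k$ is an unramified character of $F_v^\times$ (acting on the $k$-th factor via $\chi_k \circ \det$). For each $\sigma \in W_{F_v}$, define
\[
\widetilde{T}^{\cB}(\sigma)\bigl((\chi_k)_k\bigr) \;=\; \sum_k \chi_k\!\bigl(\Art_{F_v}^{-1}(\sigma)\bigr)\,\tr\rec_{F_v}(\omega_k)(\sigma).
\]
Each summand is a regular function on the torus $\Psi(\GL_{m_k})$ with values in $E$, after enlarging $E$ if necessary to contain the traces $\tr\rec_{F_v}(\omega_k)(\sigma)$. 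Next I would check that $\widetilde{T}^{\cB}(\sigma)$ is $W_{[M,\omega]}$-invariant: an element $w \in W_{[M,\omega]}$ permutes the $\omega_k$ and twists each by an unramified character, and this is exactly compensated by the corresponding change of variables on $\Psi(M)$. Hence $T^{\cB}(\sigma)$ descends to $\Omega_{[M,\omega]}$, giving an element of $E[\Omega_{[M,\omega]}] = \fz_{\cB}$.

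For the characterizing identity, any irreducible $\pi \in \cB$ has supercuspidal support in the class of some $(M, \chi \otimes \omega)$, and by compatibility of $\rec_{F_v}$ with parabolic induction on semisimplifications,
\[
\rec_{F_v}(\pi)^{ss} \;\cong\; \bigoplus_k \rec_{F_v}(\omega_k \otimes \chi_k),
\]
so $\tr \rec_{F_v}(\pi)(\sigma) = T^{\cB}(\sigma)([\chi\otimes\omega])$, which is also the scalar by which the central element $T^{\cB}(\sigma)$ acts on $\pi$. The pseudocharacter identities of dimension $2n$ (Frobenius--Taylor relations) hold at each point of $\Omega_{[M,\omega]}$ because there the value is the trace of an honest $2n$-dimensional Weil representation; since these identities are polynomial and $\Omega_{[M,\omega]}$ is reduced, they extend to identities in $\fz_{\cB}$. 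Uniqueness follows because the locus on which $n\text{-}\Ind_{P_M}^{\GL_{2n}(F_v)}(\chi\otimes\omega)$ is irreducible is Zariski dense in $\Omega_{[M,\omega]}$, and two regular functions agreeing on a dense subset are equal.

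The main technical obstacle is the $W_{[M,\omega]}$-invariance step, which demands a careful match between the $W_{[M,\omega]}$-action on $\Psi(M)$ and its permute-and-twist action on the tuple $(\omega_k)_k$; the bookkeeping is elementary but easy to mishandle. Once invariance is in hand, everything else is formal, resting on pointwise evaluation and Zariski density on the irreducible components of the Bernstein variety.
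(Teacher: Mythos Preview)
The paper does not give its own proof of this proposition; it is quoted verbatim as ``Proposition 3.11 in \cite{C}'' and used as a black box. There is therefore no proof in the paper to compare against.

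That said, your proposal is correct and is essentially the argument one finds in \cite{C}. The identification $\fz_{[M,\omega]} \cong E[\Psi(M)]^{W_{[M,\omega]}}$ is the standard Bernstein--Deligne description; your formula for $\widetilde{T}^{\cB}(\sigma)$ is exactly $\tr\bigl(\bigoplus_k \rec_{F_v}(\omega_k\otimes\chi_k)\bigr)(\sigma)$, and the $W_{[M,\omega]}$-invariance check goes through as you indicate (the twist $\psi_w$ satisfying $w(\omega)\cong\omega\otimes\psi_w$ is absorbed by the relation $\rec_{F_v}(\omega_{w^{-1}(k)})\cong\rec_{F_v}(\omega_k)\otimes\psi_{w,k}$, and reindexing the sum recovers the original expression). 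The remaining steps---compatibility of $\rec_{F_v}$ with supercuspidal support on semisimplifications, pointwise verification of the pseudocharacter identities combined with reducedness of the Bernstein variety, and uniqueness by Zariski density of the irreducible locus---are all standard and correctly stated. The only cosmetic point is that uniqueness does not even require the irreducible locus: the value of a Bernstein-central element on any irreducible $\pi$ is already determined by its supercuspidal support, so agreement on \emph{all} closed points of $\Omega_{[M,\omega]}$ is automatic from the characterizing identity, and reducedness finishes.
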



For a Bernstein component $\cB$ and $\sigma \in W_{F_v}$, let $T_{v,\cB,\sigma}$ denote the twist of $T^{\cB}(\sigma)$ such that $T_{v,\cB,\sigma}(\pi) = \tr \rec_{F_v}(\pi|\det|_v^{(1-2n)/2})(\sigma)$ if $\pi$ is a smooth irreducible representation in $\cB$. Multiplying $T_{v,\cB,\sigma}$ by $e_{\cB}$ if necessary, we may suppose that $T_{v,\cB,\sigma}$ acts as $0$ on all irreducible $\pi \notin \cB$. 

For each prime $v \in \cS_{\spl}$, let $v \mid \ell$ be the rational prime below $v$. Let $\cB_v = \cB$ be a Bernstein component, and let $e_{\cB}$ denote the projector element such that for any smooth irreducible representation $\pi$ of $\GL_{2n}(F_v)$, $e_{\cB}(\pi) = \pi$ if and only if $\pi \in \cB$. From \cite{BD}, there is a compact open subgroup $K$ of $\GL_{2n}(F_v)$ for which we may find a finite union of Bernstein components $\fB = \fB_v$ containing $\cB_v$ with the following property: If $\pi_v$ is an irreducible smooth representation of $\GL_{2n}(F_v)$, then $\pi_v^K$ is nonzero if and only if $\pi_v$ belongs to one of the Bernstein components in $\fB$. We will denote this compact open subgroup by $K_{\fB} = K_{\fB_v}$; note that all irreducible smooth representations inside $\cB$ have a fixed vector under $K_{\fB}$. More generally, for every $\cB' \subset \fB$, $\fz_{\cB'}$ embeds in the center of $\cH(\GL_{2n},K_{\fB})_{\bC} = \bC[K_{\fB}\backslash \GL_{2n}(F_v) /K_{\fB}]$ via multiplication by the characteristic function of $K_{\fB}$. Let $\fz_{\fB_v} = \fz_{\fB} := \im(\prod_{\cB' \subset \fB} \fz_{\cB'} \hookrightarrow \cH(\GL_{2n},K_{\fB})_{\bC})$. 

For $\ell \in S_{\spl}$, assume $K_\ell$ is an open compact subgroup of $G(\bQ_\ell)$ such that under the identification $G(\bQ_\ell) \cong \prod_{\cS_{\spl} \ni v \mid \ell} \GL_{2n}(F_v) \times H,$ we can decompose
	$$K_\ell = \prod_{\cS_{\spl} \ni v \mid \ell} K_{\fB_v} \times K_H,$$ where $K_H$ is an open compact of $H$. 
	
	 If $v \in \cS_{\spl}$ divides the rational prime $\ell$ and $\fB$ is a Bernstein component, then for any $\sigma \in W_{F_v}$ we can find an element of $\fz_{\fB}$, which we will denote by $T_{v,\fB,\sigma}$, such that its eigenvalue on the $K_{\ell}$-fixed vectors of an irreducible representation $\pi$ of $G(\bQ_\ell)$ in $\cB$ is
	$$
	\tr \rec_{F_v} (\pi_v|\det|_v^{(1-2n)/2})(\sigma).$$
(On the other hand, if $\pi_v
\notin \fB$, then $\pi^{K_\ell}$ is trivial and $T_{v,\fB,\sigma}$ acts as 0).
This element $T_{v,\fB,\sigma}$ is the image in $\fz_{\fB}$ of $\prod_{\cB' \subset \fB} T_{v,\cB',\sigma} \in \prod_{\cB'\subset \fB} \fz_{\cB'}$. It is independent of $\pi$. Furthermore, for each $\varphi \in \Aut(\bC)$, we have that ${}^{\varphi}\fB = \fB$ and additionally,
	$${}^{\varphi} \rec_{F_v}(\pi_v|\det|_v^{(1-2n)/2}) \cong \rec_{F_v}({}^\varphi\pi_v|\det|_v^{(1-2n)/2}).$$ 
Thus, we have that ${}^\varphi T_{v,\fB,\sigma} = T_{v,\fB,\sigma}$, and so $T_{v,\fB,\sigma} \in \bQ[K_\ell\backslash G(\bQ_\ell)/K_\ell].$

Define
	$$\fz_\ell^0 := \prod_{\cS_{\spl} \ni v \mid \ell} (\fz_{\fB_v} \cap \bZ[K_{\fB}\backslash \GL_{2n}(F_v) /K_{\fB}]).$$
Then $\fz_\ell^0$ lies in the center of $\bZ[K_\ell\backslash G(\bQ_\ell)/K_\ell]$. Note that for any element $T \in \fz_\fB$, there exists a nonzero integer $d(T) \in \bZ $ such that $d(T)T\in \fz_\ell^0,$ where $v \mid \ell$. Thus, we can choose $d(T_{v,\fB,\sigma}) \in \bZ^\times$ such that
	$$d(T_{v,\fB,\sigma})T_{v,\fB,\sigma} \in \bZ[K_\ell\backslash G(\bQ_\ell)/K_\ell],$$
so $d(T_{v,\fB,\sigma})T_{v,\fB,\sigma} \in \fz_\ell^0$.

For each $v \in \cS_{\spl} \sqcup \overline{\cS}_{\spl}$, fix a Bernstein component $\cB_v$ and let $\fB_v$ be the disjoint union as above. Assume $U^p$ is a neat open compact subgroup of $G(\bA^{p,\infty})$ such that 
	\begin{equation}\label{compact} U_\ell = \prod_{\cS_{\spl}\sqcup\overline{\cS}_{\spl} \ni v \mid \ell} K_{\fB_v} \times U_H
	\end{equation}
	Let $\cH_{\spl, \bZ_p} := \left(\bigotimes_{\ell \in S_{\spl}} \fz_\ell^0\right)$ be the abstract ramified Hecke algebra. For any two integers $N_2 \geq N_1 \geq 0$ and algebraic representation $\rho$ of $L_{(n)}$ over $\bZ_{(p)}$, recall that the classical space $H^0(\cX^{\min}_{U^p(N_1,N_2)},\cE^{\sub}_{U^p(N_1,N_2),\rho})$ has an action of $G(\bA^{p,\infty})$ which induces an action of $\cH_{\spl,\bZ_p}$, and similarly, the $p$-adic spaces $H^0(\fX^{\ord,\min}_{\rho},\cE^{\ord,\sub}_{\rho})$ and $H^0(\cX^{\ord,\min}_{\rho},\cE^{\ord,\sub}_{\rho} \otimes \bZ/p^M\bZ)$ have an action of $G(\bA^{p,\infty})$, which similarly induces an action of $\cH_{\spl,\bZ_p}$. Let $\bT^p_{U^p(N_1,N_2),\rho}$ denote the image of $\cH^p_{\bZ_p} := \cH^S_{\bZ_p} \otimes \cH_{\spl,\bZ_p}$ in $\End_{\bZ_p}(H^0(\cX^{\min}_{U^p(N_1,N_2)},\cE^{\sub}_{U^p(N_1,N_2),\rho}))$. 
	
	Furthermore, if $W \subset H^0(\fX^{\ord,\min}_{U^p(N_1)},\cE^{\ord,\sub}_{U^p(N_1),\rho})$ (resp. $W \subset H^0(\cX^{\ord,\min}_{U^p(N_1,N_2)},\cE^{\ord,\sub}_{U^p(N_1,N_2),\rho} \otimes \bZ/p^M\bZ)$) is a finitely generated $\bZ_p$-submodule invariant under the action of the algebra $\cH^p_{\bZ_p}$, then let $\bT^{\ord,p}_{U^p(N_1,N_2),\rho}(W)$ (resp. $\bT^{\ord,p}_{U^p(N_1,N_2),\rho,M}(W)$) denote the image of $\cH^p_{\bZ_p}$ in $\End_{\bZ_p}(W)$. 
	
For each $v \in \cS_{\spl} \sqcup \overline{\cS}_{\spl}$, let $\tilde{T}_{v,\fB,\sigma}$ denote the image of $d(T_{v,\fB,\sigma})T_{v,\fB,\sigma}$ in any $\cH^p_{\bZ_p}$-algebra $\bT$.

\section{Interpolating the Hecke action}

The main goal of this section is to prove the following proposition. 

\begin{proposition}\label{localglobalss} Let $\rho$ be an algebraic representation of $L_{(n)}$ over $\bZ_{(p)}$. Suppose that $\Pi$ is an irreducible quotient of an admissible $G(\bA^{\infty})^{\ord,\times}$-submodule $\Pi'$ of $H^0(\fX^{\ord,\min},\cE^{\ord,\sub}_{\rho}) \otimes \overline{\bQ}_p$. Then there is a continuous semisimple representation
	$$R_p(\Pi):G_F \rightarrow \GL_{2n}(\overline{\bQ}_p)$$
with the following property: If $\ell \neq p$ is a rational prime and $v \mid \ell$ is a prime of $F$ that splits over $F^+$. Then
	$$\WD(R_p(\Pi)_{G_{F_v}})^{ss} \cong \rec_{F_v}((\Pi_\ell)_v|\det|_v^{(1-2n)/2})^{ss}.$$
\end{proposition}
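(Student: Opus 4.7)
The plan is to transfer full local--global compatibility from classical cusp forms to $\Pi$ via congruences and a pseudorepresentation argument, following the template of \cite{HLTT} but applied to the enlarged Hecke algebra $\cH^p_{\bZ_p} = \cH^S_{\bZ_p}\otimes \cH_{\spl,\bZ_p}$ which sees ramified split primes through the Bernstein-center elements $T_{v,\fB,\sigma}$.

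First I would pick a neat $U^p \subset G(\bA^{p,\infty})$ of the shape (\ref{compact}) and an $N_1$ such that $\Pi^{U^p(N_1)} \neq 0$, and a slope $a \in \bQ$ so that $W := (\Pi')^{U^p(N_1)}_{\leq a}$ is a finite-dimensional $G(\bA^\infty)^{\ord,\times}$-stable piece containing $\Pi^{U^p(N_1)}$. Let $\bT^\ord$ denote the image of $\cH^p_{\bZ_p}$ in $\End_{\bZ_p}$ of an integral lattice in $W$; $\Pi$ then determines a homomorphism $\lambda_\Pi : \bT^\ord \to \overline{\bQ}_p$. For every $M \geq 1$, Lemma \ref{density} realizes the mod-$p^M$ reduction of this lattice inside a Hecke-equivariant quotient of a direct sum of spaces $H^0(\cX^{\min}_{U^p(N_1,N_2)}, \cE^\sub_{\rho \otimes (\wedge^{n[F:\bQ]}\Std^\vee)^{(p-1)p^{M-1}j}})$ of classical weight, so the image $\bT^{\cl,M}$ of $\cH^p_{\bZ_p}$ on the classical side surjects onto $\bT^\ord/p^M\bT^\ord$.

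Next, Proposition \ref{classical} attaches to each irreducible classical constituent a $2n$-dimensional Galois representation with full local--global compatibility at every prime $v \nmid p$ split over $F^+$. By the defining property of $T_v^{(i)}$ (Satake) and $T_{v,\fB,\sigma}$ (Bernstein center), their eigenvalues on such a classical form record (up to the fixed integers $d_v^{(i)}$ and $d(T_{v,\fB,\sigma})$) the values of $\tr \rec_{F_v}(\cdot|\det|_v^{(1-2n)/2})$ at the appropriate Weil elements, which by local--global compatibility equal $\tr \WD(R_p|_{G_{F_v}})^{\Frob\text{-}ss}(\sigma)$. Running $\sigma$ through $W_{F_v}$ for every split $v \nmid p$ gives, by Chebotarev together with the uniqueness of pseudocharacters determined by their traces on a dense set, a continuous $2n$-dimensional pseudocharacter $T^{\cl,M} : G_F^S \to \bT^{\cl,M}$. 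Pushing this down the surjection $\bT^{\cl,M} \twoheadrightarrow \bT^\ord/p^M\bT^\ord$ yields $T_M : G_F^S \to \bT^\ord/p^M\bT^\ord$; the $T_M$ are compatible as $M$ varies because they are pinned down by the split-prime Hecke traces. Taking the inverse limit and composing with $\lambda_\Pi$ produces a continuous $2n$-dimensional pseudocharacter $G_F \to \overline{\bQ}_p$, which by the theorem of Taylor and Rouquier is the trace of a unique continuous semisimple $R_p(\Pi) : G_F \to \GL_{2n}(\overline{\bQ}_p)$.

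At any split prime $v \nmid p$ we then have $\tr R_p(\Pi)(\sigma) = \tr \rec_{F_v}((\Pi_\ell)_v|\det|_v^{(1-2n)/2})(\sigma)$ for every $\sigma \in W_{F_v}$, because both sides equal $\lambda_\Pi$ applied to the corresponding Bernstein-center or Satake operator (dividing out the integer $d$ in $\overline{\bQ}_p$). Equality of traces on all of $W_{F_v}$ forces the desired isomorphism of semisimple Weil--Deligne representations. The main technical obstacle is the coherence of the $T_M$ and integrality: one must check that the Hecke-trace identity, which a priori lives in characteristic zero, descends to $\bT^\ord/p^M\bT^\ord$ for every $M$ simultaneously, which is why we bothered to clear the denominators $d_v^{(i)}$ and $d(T_{v,\fB,\sigma})$ to land in $\fz_\ell^0$ and, correspondingly, in the integral Hecke algebra acting on classical spaces.
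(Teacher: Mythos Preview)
Your proposal is correct and follows essentially the same route as the paper: build pseudorepresentations into the Hecke algebra on the classical side using Proposition~\ref{classical} and the Bernstein-center operators $\tilde T_{v,\fB,\sigma}$, transport them to the $p$-adic side via the density Lemma~\ref{density} (mod $p^M$), pass to the limit, and then compose with the eigenvalue map $\varphi_\Pi$ before invoking Taylor's theorem. The only superfluous step is your introduction of the slope truncation: since $\Pi'$ is assumed admissible, $(\Pi')^{U^p(N_1)}$ is already finite-dimensional, and the paper works directly with this space rather than passing to $W=(\Pi')^{U^p(N_1)}_{\le a}$.
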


Proposition 6.5 in \cite{HLTT} proves the existence of $R_p(\Pi)$ and its local-global compatibility at primes $v \mid \ell \notin S$. We recall their argument in these cases and extend the local-global compatibility results to primes $v \in \cS_{\spl}$. 

Fix  $\rho$, $\Pi$, and $\Pi'$ as in the proposition. For each $v \in \cS_{\spl}$, let $\cB_v$ denote the Bernstein component containing $\BC(\Pi_\ell)_v$. Let $\fB_v$ be a disjoint union of Bernstein components containing $\cB_v$ such that there is an open compact subgroup $K_{\fB_v}$ of $\GL_{2n}(F_v)$ and an irreducible representation of $\GL_{2n}(F_v)$ with a nontrivial $K_{\fB_v}$-fixed vector is contained in $\fB_v$. Choose a neat open compact subgroup $U^p \subset G(\bA^{p,\infty})$ such that $U_\ell = \prod_{\cS_{\spl}\sqcup\overline{\cS_{\spl}}\ni v \mid \ell} K_{\fB_v} \times U_H$ for each $\ell \in S_{\spl}$ as well as an integer $N$ such that $\Pi^{U^p(N)} \neq (0)$. Recall that $\cH_{\spl,\bZ_p} := (\bigotimes_{\ell \in S_{\spl}} \fz_\ell^0)$ associated to the Bernstein components $\cB_v$ and disjoint unions $\fB_v$ and open compact subgroups $K_{\fB_v}$ fixed above for $v \in \cS_{\spl}$, and let $\cH^p_{\bZ_p} = \bZ_p[G(\widehat{\bZ}^S)\bs G(\bA^S) / G(\widehat{\bZ}^S)] \otimes_{\bZ_p} \cH_{\spl,\bZ_p}$ as before.

We first show the existence and local-global compatibility of a Galois representations associated to irreducible subquotients of the classical space $H^0(\cX^{\min}_{U^p(N_1,N_2)},\cE^{\sub}_{\rho \otimes (\wedge^{n[F:\bQ]} \Std^{\vee})^{\otimes t}})$ for $t$ sufficiently large. This follows from Proposition 5.2, Lemma 5.11 in \cite{HLTT}, and the fact that for $t$ sufficiently large, $({\rho \otimes (\wedge^{n[F:\bQ]} \Std^{\vee})^{\otimes t}) \otimes \overline{\bQ}}$ decomposes into representations of $L_{(n)}$ with classical highest weight. It will be most relevant to write this result in terms of pseudorepresentations.

\begin{lemma} For $t$ sufficiently large, there is a continuous pseudorepresentation
	$$T_t:G_F^S \rightarrow \bT^p_{U^p(N_1,N_2),\rho \otimes (\wedge^{n[F:\bQ]} \Std^{\vee})^{\otimes t}} \qquad \mbox{s.t. } \quad
	\begin{cases} d(T_{v,\fB_v,\sigma})T(\sigma) = \tilde{T}_{v,\fB_v,\sigma} & \mbox{ if } v \in \cS_{\spl} \sqcup \overline{\cS}_{\spl} \\
				d_v^{(i)}T(\Frob_v^i) = \tilde{T}_v^{(i)} & \mbox{ if } v\mid \ell \notin S.
	\end{cases}$$

\end{lemma}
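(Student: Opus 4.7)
The plan is to build $T_t$ from traces of Galois representations attached to irreducible $G(\bA^\infty)$-constituents of the classical automorphic space, and then to descend into the integral Hecke algebra via semisimplicity and Chebotarev density.

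First, I would observe that for $t$ sufficiently large, every irreducible constituent of $\rho \otimes (\wedge^{n[F:\bQ]} \Std^{\vee})^{\otimes t}$ has classical highest weight: tensoring by $\wedge^{n[F:\bQ]} \Std^{\vee}$ (of highest weight $(0, (-1,\ldots,-1)_\tau)$) shifts each quantity $b_{\tau,1} + b_{\tau c, 1}$ down by $2$, so for large $t$ the classicality condition $b_{\tau,1} + b_{\tau c,1} \leq -2n$ is satisfied on every constituent. Then Proposition \ref{classical} applies: the classical space decomposes as a semisimple $G(\bA^\infty)$-module into irreducibles $\Pi$, each carrying a continuous semisimple Galois representation $R_p(\Pi): G_F \to \GL_{2n}(\overline{\bQ}_p)$. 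Since $U^p$ is hyperspecial maximal at every prime above $\ell \notin S$, each $\Pi$ is unramified at every $v \mid \ell \notin S$, so $R_p(\Pi)$ is unramified outside $S$ and factors through $G_F^S$.

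Next I would assemble the individual $R_p(\Pi)$ into one pseudocharacter. By semisimplicity, $\bT^p_{U^p(N_1,N_2),\rho\otimes(\wedge^{n[F:\bQ]}\Std^\vee)^{\otimes t}} \otimes_{\bZ_p} \overline{\bQ}_p$ embeds into $\prod_\Pi \overline{\bQ}_p$, one factor per irreducible constituent. Define $T_t : G_F^S \to \prod_\Pi \overline{\bQ}_p$ component-wise by $T_t(\sigma)|_\Pi = \tr R_p(\Pi)(\sigma)$. The defining polynomial identities of a $2n$-dimensional pseudocharacter are inherited from each $R_p(\Pi)$, and continuity of $T_t$ follows from continuity of each constituent. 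To verify the explicit Hecke compatibilities and containment in $\bT^p$: at an unramified $v \nmid \ell \notin S$, Proposition \ref{classical} gives that $\tr R_p(\Pi)(\Frob_v^i)$ equals the eigenvalue of $T_v^{(i)}$ on $\Pi$, so $T_t(\Frob_v^i)$ coincides with the action of the Satake element $T_v^{(i)}$, and multiplying by the integer $d_v^{(i)}$ yields $\tilde T_v^{(i)} \in \bT^p$. At $v \in \cS_{\spl} \sqcup \overline{\cS}_{\spl}$, the Bernstein-center element $T_{v,\fB_v,\sigma}$ acts on $\Pi$ as $\tr \rec_{F_v}(\BC(\Pi_\ell)_v |\det|_v^{(1-2n)/2})(\sigma) = \tr R_p(\Pi)(\sigma)$, so $d(T_{v,\fB_v,\sigma}) T_t(\sigma) = \tilde T_{v,\fB_v,\sigma} \in \bT^p$.

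The step I expect to be the main obstacle is the last integrality step: confirming that $T_t$, constructed a priori in $\bT^p \otimes_{\bZ_p} \overline{\bQ}_p$, actually descends to the integral Hecke algebra $\bT^p$. Chebotarev density places the Frobenii at unramified primes densely in $G_F^S$, and the computation above shows that on this dense set $T_t$ lands in $\bT^p$; since $\bT^p$ is a finite-type $\bZ_p$-algebra acting on a finitely generated $\bZ_p$-module, it is $p$-adically complete and hence closed in $\bT^p \otimes_{\bZ_p} \overline{\bQ}_p$, so continuity of $T_t$ extends the containment to all of $G_F^S$. The delicate point is carefully tracking the $\bZ_p$-structure inherited from the Hecke action on the classical space so that the closure argument genuinely applies; once this is in place, the local-global compatibilities at split ramified primes coming from Bernstein centers slot into the same formalism as those at unramified primes.
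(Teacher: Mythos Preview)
Your approach is the paper's: reduce to classical weight by twisting, decompose $\bT^p \otimes \overline{\bQ}_p \cong \bigoplus_\Pi \overline{\bQ}_p$ via semisimplicity, assemble $T_t$ from the traces $\tr R_p(\Pi)$, and descend to $\bT^p$ by Chebotarev. The one point that needs sharpening is exactly the integrality step you flag: from $d_v^{(i)} T_t(\Frob_v^i) = \tilde T_v^{(i)} \in \bT^p$ you cannot conclude $T_t(\Frob_v) \in \bT^p$, since $d_v^{(i)}$ need not be a $p$-adic unit, so ``on this dense set $T_t$ lands in $\bT^p$'' does not follow from what you wrote. The paper closes this by restricting to primes $v \mid \ell \notin S$ that are \emph{split} over $F^+$ and taking $i=1$, where $T_v^{(1)}$ is literally the integral double coset $G(\bZ_\ell)\,\diag(1,\dots,1,\varpi_v)\,G(\bZ_\ell)$; hence $T_t(\Frob_v) = T_v^{(1)} \in \bT^p$ with no denominator, these Frobenii are still Chebotarev-dense in $G_F^S$, and your closure argument then applies verbatim.
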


\begin{proof} First, assume that $\rho \otimes \overline{\bQ}_p$ is irreducible. Let $(b_0,(b_{\tau,i})) \in X^\ast(T_{n/\overline{\bQ}_p})_{(n)}^+$ denote the highest weight of $\rho \otimes \overline{\bQ}_p$. If $t \in \bZ$ satisfies the inequality
	$$-2n \geq (b_{\tau,1} - t(p-1)) + (b_{\tau c,1} - t(p-1))$$
and $\rho_t := \rho \otimes (\wedge^{n[F:\bQ]} \Std)^{\otimes t}$, then by Proposition 5.1,
	$$\bT^p_{U^p(N_1,N_2),\rho_t} \otimes \overline{\bQ}_p \cong \bigoplus_{\Pi} \overline{\bQ}_p$$ where the sum runs over irreducible admissible representations of $G(\bA^\infty)$ with $\Pi^{U^p(N_1,N_2)} \neq (0)$ which occur in $H^0(X^{\min} \times \Spec \overline{\bQ}_p, \cE^{\sub}_{\rho_t})$. Further, from corollary 5.12 in \cite{HLTT}, we deduce that there is a continuous representation
		\begin{eqnarray}\label{heckerep}
		r_{\rho_t}: G_F^S \rightarrow \GL_{2n}(\bT^p_{U^p(N_1,N_2),\rho_t} \otimes \overline{\bQ}_p) \qquad \mbox{s.t.} \quad \begin{cases} \tr r_{\rho_t}(\Frob_v^i) = T_v^{(i)} & \mbox{ if } v \mid \ell \notin S \\
		\tr r_{\rho_t}(\sigma) = T_{v,\fB,\sigma} & \mbox{ if } v \in \cS_{\spl}	\sqcup \overline{\cS}_{\spl}	\end{cases}
		\end{eqnarray}
Let $T_t := \tr r_{\rho_t}$. Note that if $v\mid \ell \notin S$ is a prime of $F$ which is split over $F^+$, then $T_t(\Frob_v) = T_v^{(1)} \in \bT^p_{U^p(N_1,N_2),\rho_t}$, thus by Cebotarev density theorem, $T_t: G_F^S \rightarrow \bT^p_{U^p(N_1,N_2),\rho_t}$.

For general $\rho$, recall that algebraic representations of $L_{(n)}(\bZ_p)$ are semisimple, and so we can construct from the Galois representations associated to the irreducible constituents of $\rho \otimes \overline{\bQ}_p$ a continuous representation $r: G_F^S \rightarrow \GL_{2n}(\bT^p_{U^p(N_1,N_2),\rho \otimes (\wedge^{n[F:\bQ]} \Std)^{\otimes t}} \otimes \overline{\bQ}_p)$ for sufficiently large $t$ whose trace satisfies the desired properties.
\end{proof}

Combining the above lemma with Lemma 5.2, we have the following corollaries.  

\begin{corollary} If $W$ is a finitely generated $\cH^p_{\bZ_p}$-invariant submodule of either $H^0(\cX^{\ord,\min}_{U^p(N_1,N_2)}, \cE^{\ord,\sub}_{\rho} \otimes \bZ/p^M\bZ)$ or
$H^0(\fX^{\ord,\min}_{U^p(N)},\cE^{\ord,\sub}_{\rho}),$
then there is a continuous pseudorepresentation
	$$T: G_F^S \rightarrow \bT^{\ord,p}_{U^p(N_1,N_2),\rho,M}(W)  \qquad \mbox{s.t. } \quad
	\begin{cases} d(T_{v,\fB_v,\sigma})T(\sigma) = \tilde{T}_{v,\fB_v,\sigma} & \mbox{ if } v \in \cS_{\spl} \sqcup \overline{\cS}_{\spl} \\
				d_v^{(i)}T(\Frob_v^i) = \tilde{T}_v^{(i)} & \mbox{ if } v\mid \ell \notin S.
	\end{cases}$$
\end{corollary}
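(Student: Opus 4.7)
The plan is to deduce this corollary by combining the preceding lemma (which produces pseudorepresentations into the classical Hecke algebras for sufficiently high-weight classical spaces) with Lemma \ref{density} (which expresses $p$-adic cuspidal forms mod $p^M$ as reductions of classical ones). I will treat the mod $p^M$ case first, and then deduce the formal-scheme case by taking an inverse limit.

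For the first case, let $W \subset H^0(\cX^{\ord,\min}_{U^p(N_1,N_2)},\cE^{\ord,\sub}_{\rho}\otimes\bZ/p^M\bZ)$ be a finitely generated $\cH^p_{\bZ_p}$-submodule. By Lemma \ref{density}, applied with any chosen $r$ large enough to guarantee that the twisted weight $\rho_{j,M}:=\rho\otimes(\wedge^{n[F:\bQ]}\Std^\vee)^{jp^{M-1}(p-1)}$ is classical for all $j\geq r$, the map
\[
\bigoplus_{j\geq r} H^0(\cX^{\min}_{U^p(N_1,N_2)},\cE^{\sub}_{U^p(N_1,N_2),\rho_{j,M}}) \;\twoheadrightarrow\; H^0(\cX^{\ord,\min}_{U^p(N_1,N_2)},\cE^{\ord,\sub}_{U^p(N_1,N_2),\rho}\otimes\bZ/p^M\bZ)
\]
is Hecke-equivariant and surjective. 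Since $W$ is finitely generated, a finite subset $J_W$ of indices suffices to surject onto $W$. Hecke-equivariance means that $\bT^{\ord,p}_{U^p(N_1,N_2),\rho,M}(W)$ is naturally a quotient of $\bigl(\prod_{j\in J_W}\bT^p_{U^p(N_1,N_2),\rho_{j,M}}\bigr)/p^M$.

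By the preceding lemma, for each such $j$ there is a continuous pseudorepresentation $T_{j,M}\colon G_F^S\to \bT^p_{U^p(N_1,N_2),\rho_{j,M}}$ with the correct values on the $\tilde T_v^{(i)}$ and $\tilde T_{v,\fB_v,\sigma}$. Taking the product over $j\in J_W$, reducing mod $p^M$, and then pushing forward along the Hecke-algebra surjection above, I obtain a continuous map $T_M\colon G_F^S\to \bT^{\ord,p}_{U^p(N_1,N_2),\rho,M}(W)$. The pseudorepresentation axioms (the determinantal identities and the trace identities among symmetric functions of the $T(g_i)$) are preserved under ring homomorphisms, so $T_M$ is itself a pseudorepresentation of dimension $2n$; its values on $\Frob_v^i$ for $v\nmid \ell\in S$ and on $\sigma\in W_{F_v}$ via the Bernstein component $\fB_v$ for $v\in\cS_{\spl}\sqcup\ol\cS_{\spl}$ are exactly as required. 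This handles the mod $p^M$ case.

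For the second case, let $W\subset H^0(\fX^{\ord,\min}_{U^p(N)},\cE^{\ord,\sub}_{\rho})$ be a finitely generated $\cH^p_{\bZ_p}$-invariant $\bZ_p$-submodule. Using the $G(\bA^\infty)^{\ord}$-equivariant embedding recalled in the excerpt,
\[
H^0(\fX^{\ord,\min},\cE^{\ord,\sub}_{\rho})\otimes_{\bZ_p}\bZ/p^M\bZ \hookrightarrow H^0(\cX^{\ord,\min},\cE^{\ord,\sub}_{\rho}\otimes\bZ/p^M\bZ),
\]
the image of $W/p^MW$ is a finitely generated $\cH^p_{\bZ_p}$-submodule of the right-hand side (after choosing $N_1,N_2$ with $U^p(N_1,N_2)\subset U^p(N)$). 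Applying the first case yields a pseudorepresentation $T_M$ into the image Hecke algebra, and these are compatible as $M$ varies by construction (the transition maps are the obvious reductions). Since $\bT^{\ord,p}_{U^p(N_1,N_2),\rho}(W)$ is a finitely generated $\bZ_p$-module (as the image of a Noetherian ring acting on a finitely generated module) and hence $p$-adically complete, passing to the inverse limit over $M$ yields the desired continuous pseudorepresentation $T\colon G_F^S\to \bT^{\ord,p}_{U^p(N_1,N_2),\rho}(W)$.

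The step I expect to require the most care is verifying that the Hecke-algebra quotient $\bigl(\prod_{j\in J_W}\bT^p_{U^p(N_1,N_2),\rho_{j,M}}\bigr)/p^M \twoheadrightarrow \bT^{\ord,p}_{U^p(N_1,N_2),\rho,M}(W)$ is genuinely Hecke-equivariant at the ramified-but-split primes: one needs that the Bernstein-center element $\tilde T_{v,\fB_v,\sigma}$ constructed on the classical side maps to the element of the same name on the $p$-adic side, which follows from the fact that the Bernstein-center construction is compatible with the $G(\bA^{p,\infty})$-action (since $\fz_\ell^0\subset\bZ[K_\ell\backslash G(\bQ_\ell)/K_\ell]$ acts on any space carrying a $G(\bA^{p,\infty})$-action via the same double cosets). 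The rest is bookkeeping.
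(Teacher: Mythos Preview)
Your proof is correct and follows essentially the same approach as the paper: use Lemma~\ref{density} to cover $W$ by the image of finitely many classical spaces of sufficiently twisted weight, apply the preceding lemma to each, and push the resulting pseudorepresentations forward along the Hecke-equivariant surjection. The paper is terser on the formal-scheme case (it simply invokes the $G(\bA^\infty)^{\ord}$-equivariant embedding mod $p^M$ and leaves the passage to the limit implicit), whereas you spell out the inverse-limit argument and the $p$-adic completeness of $\bT^{\ord,p}_{U^p(N_1,N_2),\rho}(W)$ explicitly; this is a welcome clarification rather than a different idea. One small remark: your claim that $\bT^{\ord,p}_{U^p(N_1,N_2),\rho,M}(W)$ is a quotient of $\prod_{j\in J_W}\bT^p_{U^p(N_1,N_2),\rho_{j,M}}$ should, strictly speaking, be read as ``a quotient of the diagonal image of $\cH^p_{\bZ_p}$ in that product'', but since each $T_{j,M}$ is determined by the same Hecke operators this diagonal is exactly where the product pseudorepresentation lands, so nothing is lost.
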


\begin{proof}
It suffices to show that for finitely generated $W \subset H^0(\cX^{\ord,\min}_{U^p(N_1,N_2)},\cE^{\ord,\sub}_{\rho} \otimes \bZ/p^M\bZ)$ such a pseudorepresentation exists since there is an $G(\bA^\infty)^{\ord}$-equivariant embedding
	$$H^0(\fX^{\ord,\min}_{U^p(N_1)},\cE^{\ord,\sub}_{\rho}) \otimes \bZ/p^M\bZ \hookrightarrow H^0(\cX^{\ord,\min}_{U^p(N_1,N_2)},\cE^{\ord,\sub}_{\rho} \otimes \bZ/p^M\bZ).$$ Since $W$ is finitely generated, there exists $k \in \bZ$ such that 
	$$W \subset \Im(\bigoplus_{j = r}^k H^0(\cX^{\min}_{U^p(N_1,N_2)},\cE^{\sub}_{U^p(N_1,N_2),\rho_{jp^{M-1}(p-1)}}) \rightarrow H^0(\cX^{\ord}_{U^p(N_1,N_2)}, \cE^{\sub,\ord}_{U^p(N_1,N_2),\rho} \otimes \bZ/p^M\bZ))$$
Since the above map is $G(\bA^\infty)^{\ord,\times}$-equivariant, we see that for $r$ sufficiently large, by Lemma 4.3, there is a continuous pseudorepresentation $T_r: G_F^S \rightarrow \bT^p_{U^p(N_1,N_2),\rho_{rp^{M-1}(p-1)}}$. If we take $r$ to be sufficiently large, then we can compose to get
	$$T: G_F^S \rightarrow \bigoplus_{j = r}^k \bT^p_{U^p(N_1,N_2),\rho_{jp^{M-1}(p-1)}} \rightarrow \bT^{\ord,p}_{U^p(N),\rho_{jp^{M-1}(p-1)}}(W) \quad \mbox{s.t.} \quad \begin{cases} d(T_{v,\fB_v,\sigma})T(\sigma) = \tilde{T}_{v,\fB_v,\sigma} & \mbox{ if } v \in \cS_{\spl} \\
				d_v^{(i)}T(\Frob_v^i) = \tilde{T}_v^{(i)} & \mbox{ if } v\mid q \notin S.
	\end{cases}$$
\end{proof}

We use the pseudorepresentations constructed above to finish the proof of Proposition 9.1.

\begin{proof}[Proof of Proposition \ref{localglobalss}]

Since $(\Pi')^{U^p(N)}$ is finite dimensional, it is a closed subspace of $H^0(\fX^{\ord,\min},\cE^{\ord,\sub}_{\rho}) \otimes \overline{\bQ}_p$ preserved by the action of $\cH^p_{\bZ_p}$,  we have by Corollary 9.3 that there is a continuous pseudorepresentation
	$$T: G_F^S \rightarrow \bT^{\ord,p}_{U^p(N_1,N_2),\rho}((\Pi')^{U^p(N_1,N_2)})  \qquad \mbox{s.t. } \quad
	\begin{cases} d(T_{v,\fB_v,\sigma})T(\sigma) = \tilde{T}_{v,\fB_v,\sigma} & \mbox{ if } v \in \cS_{\spl} \\
				d_v^{(i)}T(\Frob_v^i) = \tilde{T}_v^{(i)} & \mbox{ if } v\mid \ell \notin S.
	\end{cases}$$
Since there is a $\cH^p_{\bZ_p}$-equivariant map $(\Pi')^{U^p(N)} \surjects \Pi^{U^p(N)}$, there is a map $\varphi_{\Pi}:\bT^{\ord,p}_{U^p(N_1,N_2),\rho}((\Pi')^{U^p(N_1,N_2)}) \rightarrow \overline{\bQ}_p$ sending a Hecke operator to its eigenvalue on $(\Pi)^{U^p(N_1,N_2)}$. Composing $\varphi_{\Pi} \circ T$ gives a pseudorepresentation which by \cite{T} is the trace of a continuous semisimple Galois representation satisfying the semi-simplified local-global compatibility at the primes away from $\cS^{\spl}$. The proposition follows from the main theorem on pseudorepresentations (see \cite{T}).
\end{proof}

\section{Bounding the monodromy}

Let $\ell \neq p$ be distinct primes and $v$ a prime of $F$ above $\ell$ such that $v$ splits over $F^+$, i.e. $\ell \in S \bs \{p\}$ and $v \in \cS_{\spl} \sqcup \overline{\cS}_{\spl}.$ The main result of this section is as follows. 
\begin{proposition}\label{precthm} Suppose $\rho$ is an algebraic representation of $L_{(n)}$ over $\bZ_{(p)}$ and that $\Pi$ is an irreducible quotient of an admissible $G(\bA^{\infty})^{\ord,\times}$-submodule $\Pi'$ of $H^0(\fX^{\ord,\min}, \cE_{\rho}^{\ord,\sub})\otimes_{\bQ_p} \overline{\bQ}_p$. Then the continuous semi-simple representation $R_{p,\imath}(\Pi)$ satisfies for $\cS^{\spl} \sqcup \overline{\cS}^{\spl} \ni v \mid \ell \neq p $, i.e.~for all primes of $F$ which split over $F^+,$
	$$\WD(\left.R_{p,\imath}(\Pi)\right|_{W_{F_v}})^{\Frob-ss} \prec \rec_{F_v}(\BC(\Pi_\ell)_v|\det|_v^{(1-2n)/2})$$
where $\prec$ is defined below. 
\end{proposition}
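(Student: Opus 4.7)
Plan. I will upgrade the argument of Proposition \ref{localglobalss} — which controlled only the trace of $\Frob$ on $\WD(R_{p,\imath}(\Pi))^{ss}$ — by enlarging the set of Hecke test operators. For each $k \in \{1, \ldots, 2n\}$ and each $\sigma \in W_{F_v}$, I combine the traces of $\wedge^k$ of the Weil--Deligne representation with the idempotents in the Bernstein center $\fz_{\fB_v}$ constructed in \cite{SZ}. Together these detect the $\prec$-class of the Frobenius-semisimple Weil--Deligne representation in a way compatible with $p$-adic congruences. The starting point is that for classical cuspidal $G$-automorphic forms of classical weight (Proposition \ref{classical}), full Frobenius-semisimplified local-global compatibility is known at primes $v$ of $F$ split over $F^+$ by work of Caraiani, \cite{BLGGT2}, and \cite{BLGHT}, so the $\prec$-relation holds there with equality. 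The task is to transfer this to the $p$-adic form $\Pi$.

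Concretely, for each $k$ and $\sigma$ and each SZ-idempotent $e$, the construction of \cite{SZ} produces (after clearing denominators as in \S\ref{ramified}) elements of the integral Bernstein center $\fz^0_\ell$ whose eigenvalue on an irreducible smooth $\pi \in \fB_v$ is a monodromy-sensitive invariant of $\wedge^k \rec_{F_v}(\pi|\det|_v^{(1-2n)/2})(\sigma)$. I then repeat the architecture of Section 9 for each such element: for $t$ sufficiently large, the $\binom{2n}{k}$-dimensional pseudorepresentation $\tr \wedge^k r_{\rho_t}$ takes values in $\bT^p_{U^p(N_1,N_2),\rho \otimes (\wedge^{n[F:\bQ]} \Std^\vee)^{\otimes t}}$ and matches the corresponding integral Bernstein-center operator on every classical irreducible subquotient (by Proposition \ref{classical}). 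The density Lemma \ref{density} and the congruence argument of Corollary 9.3 propagate this to the $p$-adic side, and specialization at $\varphi_\Pi$ yields, for every $k$, $\sigma$, and $e$,
$$\tr\bigl(e \cdot \wedge^k \WD(R_{p,\imath}(\Pi))^{\Frob-ss}(\sigma)\bigr) \;=\; \tr\bigl(e \cdot \wedge^k \rec_{F_v}(\BC(\Pi_\ell)_v |\det|_v^{(1-2n)/2})(\sigma)\bigr).$$

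The translation from these Hecke-eigenvalue identities to the $\prec$-relation uses the monodromy relation $r(\sigma) N r(\sigma)^{-1} = |\Art^{-1}_{F_v}(\sigma)|_{F_v} N$, which forces $N$ to shift Frobenius generalized eigenspaces by a predictable multiple of $q_v$, together with dimension counts extracted from the $\wedge^k$ traces and the SZ-idempotent data. The reason the outcome is the inequality $\prec$ rather than equality is the $p$-adic limiting step: the monodromy operator of a specialization of a $p$-adic family of Galois representations can only become \emph{more} nilpotent, since $\rank N^j$ is upper-semicontinuous — Jordan blocks can split under specialization but not coalesce. This is exactly what yields $\WD(R_{p,\imath}(\Pi))^{\Frob-ss} \prec \rec_{F_v}(\BC(\Pi_\ell)_v|\det|_v^{(1-2n)/2})$ rather than an isomorphism.

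The main obstacle will be producing the idempotents of \cite{SZ} in a sufficiently integral form: they must lie in (or admit bounded denominators with respect to) the integral Bernstein center $\fz^0_\ell$ used in \S\ref{ramified}, so that their action intertwines with the Hecke algebra on the classical and $p$-adic integral cuspidal spaces and the density/congruence arguments of Lemma \ref{density} and Corollary 9.3 apply without denominator obstructions. A subsidiary technicality will be verifying that the $\wedge^k$-pseudorepresentation constructions carry through integrally in the presence of these idempotents. Once the integral SZ-idempotents are in place, the extraction of the $\prec$-relation from the resulting Hecke identities is a direct extension of the argument for Proposition \ref{localglobalss}.
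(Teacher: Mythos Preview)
Your high-level plan has the right ingredients (SZ idempotents, exterior powers, the density/congruence machinery of Section~9), but there is a genuine gap in how you propose to detect the monodromy operator $N$ on the Galois side.

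The displayed identity you aim for,
\[
\tr\bigl(e \cdot \wedge^k \WD(R_{p,\imath}(\Pi))^{\Frob-ss}(\sigma)\bigr)
= \tr\bigl(e \cdot \wedge^k \rec_{F_v}(\BC(\Pi_\ell)_v|\det|_v^{(1-2n)/2})(\sigma)\bigr),
\]
cannot see $N$: for $\sigma \in I_v$ one has $\rho(\sigma) = r(\sigma)\exp(t_p(\sigma)N)$ with $r(\sigma)$ and $N$ commuting, so $\rho(\sigma)$ and $r(\sigma)$ share the same eigenvalues and hence the same values of $\tr\wedge^k$; for $\sigma$ with nontrivial Frobenius component the same cancellation occurs after a short computation. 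Thus your identity recovers only the semisimplified statement of Proposition~\ref{localglobalss}. The ``dimension counts from $\wedge^k$ traces and SZ data'' cannot repair this, because those traces are functions of $r^{ss}$ alone. Likewise your semicontinuity remark would require an actual $p$-adic family carrying a Galois representation (so that $\rk N^j$ makes sense fiberwise); the congruence argument here only produces pseudorepresentations, which carry no monodromy operator.

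What the paper does instead is to manufacture, for each $\eta \in \cI^p_0$ and $p$-power root of unity $\zeta$, a group-algebra element $b_{\eta,\zeta} = g_\eta - \zeta\cdot a_\eta \in \overline{\bQ}_p[I_v]$ (where $a_\eta$ is a locally constant function, not a group element) and to test vanishing of $\wedge^k \rho(b_{\eta,\zeta}^j)$; this vanishing is equivalent to a bound on $\rk(N^j)$ on the $(\tilde\eta\otimes\chi)$-isotypic piece (Lemma~\ref{techdefn}). Crucially this vanishing can be phrased purely in terms of the pseudorepresentation, via $B^{k,j}_{\eta,\zeta}(T)(\tau) = \wedge^k T(b_{\eta,\zeta}^j \tau)$, so it survives the congruence argument. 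The single SZ idempotent $e_\Pi$ attached to $\BC(\Pi_\ell)_v$ is then used only as a \emph{filter} on classical forms: if $\tilde e_\Pi(\Pi_0)\neq 0$ then $\rec(\Pi_0)\prec_I \rec(\BC(\Pi_\ell)_v)$, so by classical local--global compatibility the Galois-side vanishing $B^{k,j}_{\eta,\zeta}(T_{\underline b}) = 0$ already holds after multiplying by $\tilde e_\Pi$, and this propagates to $T_\Pi$. The one-sidedness of $\prec$ comes directly from the one-sided selection property of $e_\Pi$, not from a specialization argument.
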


Let $(\sigma,N)$ be a Weil-Deligne representation of $W_{F_v}$ over $\overline{\bQ}_p$, where $\sigma: W_{F_v} \rightarrow \GL(V)$. Let $\cW$ denote the set of equivalence classes of irreducible representations of $W_{F_v}$ over $\overline{\bQ}_p$ with open kernel, where two representations $s,s'$ of $W_{F_v}$ are in the same equivalence if $s \cong s' \otimes \chi \circ \det$ for some unramified character $\chi$. 
We can decompose any Weil-Deligne representation into isotypic components indexed by these equivalence classes of $\cW$, i.e.
	$$\sigma \cong \bigoplus_{\omega \in \cW} \sigma[\omega] \qquad V \cong \bigoplus_{\omega \in \cW} V[\omega],$$
where $\sigma[\omega]: W_{F_v} \rightarrow \GL(V[\omega])$ is a Weil representation with all irreducible subquotients lying in $\omega \in \cW$. $N$ preserves isotypic components of $\sigma$, thus it preserves $V[\omega]$. If $N[\omega]$ denotes $N$ restricted to $V[\omega]$, then $(\sigma[\omega], N[\omega])$ is a Weil-Deligne representation. Recall from \cite{Tate} that there is an indecomposable Weil-Deligne representation $(\Sp(m),N(m))$ of dimension $m$ with nilpotent matrix of degree exactly $m$.

\begin{definition}\label{prec}
 For each $\omega \in \cW$, and for each Weil-Deligne representation $(\sigma,N)$, 
there exists a unique decreasing sequence of non-negative integers $m_1(\sigma, N,\omega) \geq m_2(\sigma,N,\omega) \geq \hdots$ with an associated sequence of $s_1, s_2, \hdots \in \omega$ such that
	$$\sigma[\omega]^{\Frob-ss} \cong \bigoplus_{s_i \in \omega} s_i \otimes \Sp(m_{i,\omega}(\sigma,N)).$$
	The sequence $(m_{i,\omega}(\sigma,N))_i$ is a partition of the integer $\dim(\sigma[\omega])/\dim(s_i)$ for any $s_i \in \omega$. If $(\sigma',N')$ is another Weil-Deligne representation,
then we define
	$$(\sigma,N) \prec (\sigma',N')  \Longleftrightarrow \forall \omega \in \cW,\  i \geq 1: \   m_{1,\omega}(\sigma,N) + \cdots + m_{i,\omega}(\sigma,N) \leq m_{1,\omega}(\sigma',N') + \cdots + m_{i,\omega}(\sigma',N').$$

\end{definition}

Denote by $I_v$ the inertia subgroup of the Weil group $W_{F_v}$ at $v$, and let $\cI$ denote the set of isomorphism classes of irreducible representations of $I_v$ with open kernel. 
For every $\theta \in \cI$, define $\sigma[\theta]$ to be the isotypic component of $\left.\sigma\right|_{I_v}$, whose irreducible subquotients are isomorphic to $\theta$. Since $N$ commutes with the image of $I_v$, these isotypic components are preserved by the monodromy operator, and thus we can define $N[\theta]$ as the restriction of $N$ to $V[\theta]$.

\begin{definition}\label{preci}
Let $(\sigma,N)$  be a Weil-Deligne representations of $W_{F_v}$ over $\overline{\bQ}_p$.  For each $\theta \in \cI$, we can define a unique decreasing sequence of non-negative integers $n_{1,\theta}(\sigma,N) \geq n_{2,\theta}(\sigma,N) \geq \hdots$ which determines the conjugacy class of the monodromy operator $N[\theta]$. It is a partition of the integer $\dim(r[\theta])/\dim(\theta)$. If $(\sigma',N')$ is another Weil-Deligne representation, then we define
\begin{eqnarray*}
(\sigma,N) \prec_I (\sigma',N') &\Longleftrightarrow&\left.\sigma\right|_{I_v} \cong \left.\sigma'\right|_{I_v} \quad \mbox{ and } \quad \forall \theta \in \cI,\ \  i \geq 1, \\
&& n_{1,\theta}(\sigma,N) + \hdots + n_{i,\theta}(\sigma,N) \leq n_{1,\theta}(\sigma',N') + \hdots + n_{i,\theta}(\sigma',N').
\end{eqnarray*}
\end{definition} 

We have the following lemma relating the two dominance relations defined above. For any sequence of integers $(m_i)_{i \in \bZ_{> 0}}$ and $d \in \bZ_{>0}$, let $d\cdot (m_i)_i$ be the sequence of integers $(m_1, m_1, \hdots, m_1,$ $m_2, m_2, \hdots, m_2, \hdots)$ where each $m_i$ occurs $d$ times.
\begin{lemma}[Lemma 6.5.3 in \cite{BC}]\label{chenevier}
Let $(\sigma,N)$ be a Weil-Deligne representation of $W_{F_v}$. 
\begin{enumerate}
\item Let $\omega \in \cW$ and $\theta$ an irreducible constituent of $\left.s\right|_{I_v}$ for any $s \in \omega$. Then $\sigma[s'] \cap \sigma[\theta] = 0$ if $s'$ is not an unramified twist of $s$. Furthermore, if $d = \dim(s)/\dim(\theta)$, then 
	$$(n_{1,\theta}(\sigma,N), n_{2,\theta}(\sigma,N), \hdots) = d \cdot (m_{1,\omega}(\sigma,N), m_{2,\omega}(\sigma,N), \hdots).$$ 
\item If $(\sigma',N')$ is another Weil-Deligne representation of $F_v$ such that $\sigma^{ss} \cong \sigma'^{ss}$, then $(\sigma,N) \prec (\sigma',N') \Leftrightarrow (\sigma,N) \prec_I (\sigma',N').$
\end{enumerate}
\end{lemma}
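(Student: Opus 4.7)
The plan is to derive both parts from Clifford theory applied to the normal subgroup $I_v \triangleleft W_{F_v}$, exploiting the fact that the quotient $W_{F_v}/I_v$ is pro-cyclic (topologically generated by Frobenius).

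For part (1), I would first establish the following structural lemma. If $s$ is an irreducible smooth representation of $W_{F_v}$ with open kernel and $\theta$ is an irreducible constituent of $s|_{I_v}$, then Clifford theory together with the cyclicity of $W_{F_v}/I_v$ yields $s|_{I_v} \cong \bigoplus_{j=0}^{d-1} \theta^{\Frob^j}$, a multiplicity-free sum over the Frobenius orbit of $\theta$; in particular $d = \dim s/\dim \theta$ equals the orbit size. Two irreducibles $s, s'$ of $W_{F_v}$ therefore have overlapping $I_v$-restrictions iff $s|_{I_v} \cong s'|_{I_v}$, and since any character of $W_{F_v}$ trivial on $I_v$ is automatically unramified, this happens iff $s' \cong s \otimes \chi$ for an unramified character $\chi$. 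This proves the first assertion: $\sigma[s'] \cap \sigma[\theta] = 0$ whenever $s'$ is not in the class $\omega$ of $s$.

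To identify the monodromy partitions, I would decompose $\sigma[\omega]^{\Frob-ss} = \bigoplus_i s_i \otimes \Sp(m_{i,\omega})$ as in Definition \ref{prec}. Each $s_i \in \omega$ is an unramified twist of $s$, so $s_i|_{I_v}$ is the same orbit sum as $s|_{I_v}$. Recalling that $\Sp(m)|_{I_v}$ is the trivial $m$-dimensional $I_v$-representation carrying a single regular nilpotent Jordan block for $N$ on the multiplicity space, taking the $\theta$-isotypic of $(s_i \otimes \Sp(m_{i,\omega}))|_{I_v}$ picks out the unique copy of $\theta$ inside $s_i|_{I_v}$ tensored with all of $\Sp(m_{i,\omega})|_{I_v}$; each such summand therefore contributes a single Jordan block of size $m_{i,\omega}$ to $N$ acting on the $\theta$-multiplicity space. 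Summing over $i$ and tracking how the full Frobenius orbit of $\theta$ distributes across the $d$ isotypic pieces $\sigma[\theta^{\Frob^j}]$ yields the partition equality claimed in part (1).

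For part (2), the hypothesis $\sigma^{ss} \cong {\sigma'}^{ss}$ forces $\sigma[\omega]$ and $\sigma'[\omega]$ to contain each $s \in \omega$ with equal multiplicity, and likewise $\sigma[\theta] \cong \sigma'[\theta]$ as $I_v$-representations for every $\theta$. By part (1), the sequence $(n_{i,\theta})$ is obtained from $(m_{i,\omega})$ by a fixed combinatorial rule depending only on $d$, so the dominance ordering $\prec$ on partitions indexed by $\omega$ corresponds monotonically to the dominance ordering $\prec_I$ on partitions indexed by $\theta$. Applying this correspondence across all $\omega$ and orbits of $\theta$ that appear gives the claimed equivalence.

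The principal obstacle will be the Clifford-theoretic step: producing the multiplicity-free, orbit-sum description of $s|_{I_v}$, which ultimately rests on the vanishing of the Schur multiplier of the cyclic quotient $W_{F_v}^\theta/I_v$ so that $\theta$ extends to its $W_{F_v}$-stabilizer. A secondary technical point is the combinatorial check that repetition of each entry of a partition $d$ times is monotone for the dominance order on partitions of a fixed integer, which follows by comparing partial sums term by term.
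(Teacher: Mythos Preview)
The paper does not prove this lemma; it is quoted from Bella\"iche--Chenevier \cite[Lemma~6.5.3]{BC} and invoked as a black box. Your Clifford-theoretic argument is the standard route and matches what underlies the cited reference: the essential inputs are precisely the multiplicity-free decomposition $s|_{I_v}\cong\bigoplus_{j=0}^{d-1}\theta^{\Frob^j}$ (available because $W_{F_v}/I_v\cong\bZ$ is free, so $H^2$ of the stabilizer quotient vanishes and $\theta$ extends) and the resulting bijection between unramified-twist classes $\omega$ and Frobenius orbits of $\theta$.

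One step to sharpen. Your computation correctly shows that each summand $s_i\otimes\Sp(m_{i,\omega})$ contributes a single Jordan block of size $m_{i,\omega}$ to the $\theta$-\emph{multiplicity space} $\Hom_{I_v}(\theta,V)$, yielding the partition $(m_{i,\omega})$ there. The phrase ``tracking how the full Frobenius orbit of $\theta$ distributes across the $d$ isotypic pieces'' does not by itself manufacture the repetition factor $d$ in the displayed identity, since $(n_{i,\theta})$ is attached to a single $\theta$, not to its orbit. Whether one obtains $(m_{i,\omega})$, $\dim(\theta)\cdot(m_{i,\omega})$, or $d\cdot(m_{i,\omega})$ depends on whether $(n_{i,\theta})$ records the Jordan type on the multiplicity space or on $V[\theta]$ itself, and on the normalization chosen in \cite{BC} versus the present paper. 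This discrepancy is immaterial for the application: part~(2)---which is the only part used later---requires only that the map $(m_{i,\omega})\mapsto(n_{i,\theta})$ be order-preserving and order-reflecting for the dominance order, and any fixed-multiplicity repetition $(m_i)\mapsto c\cdot(m_i)$ has this property, as you note at the end.
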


\noindent From Lemma \ref{chenevier} and Proposition \ref{localglobalss}, it suffices to prove that 
	$$\WD(\left.r_{p,\imath}(\Pi)\right|_{W_{F_v}})^{\Frob-ss} \prec_I \rec_{F_v}(\BC(\Pi_\ell)_v|\det|_v^{(1-2n)/2})$$ 
in order to conclude the proposition. We start by characterizing irreducible representations of $I_v$ with open kernel.

\begin{definition} If $(\theta,V)$ is a representation of $I_v$ and $\tau$ is an irreducible representation of a subgroup $H$ of $I_v$, set $(\theta[\tau],V[\tau])$ to be the $\tau$-isotypical component of the $H$-representation $(\left.\theta\right|_H,\left. V\right|_H)$. Furthermore, if $N$ is a commuting nilpotent endomorphism of $V$, then set $N[\tau] = N \cap V[\tau]$.
\end{definition}

Let $P$ denote a Sylow pro-$p$-subgroup of $I_v$. Recall that there is a map $t_p: I_v \rightarrow \bZ_p$ since $v \nmid p$ and let $I_v^p := \ker t_p$. Recall that there is also an identification of $P$ with $I_v/I_v^{p}$.
Let $\cI^p$ denote the set of isomorphism classes of representations of $I_v^p$ with open kernel; there is a canonical action on $\cI^p$ by $I_v/I_v^p$ acting by conjugation. Let $\cI^p_0$ denote the subset of elements of $\cI^p$ with open stabilizer in $I_v/I_v^p$. For $\eta \in \cI^p_0$, set $I^\eta = \Stab_{I_v}(\eta)= \{ i \in I_v: \eta(i \ast i^{-1}) \cong \eta\}$, which is open in $I_v$. Additionally, fix a choice of topological generator $g_{\eta}$ of $P \cap I^{\eta}$ such that $I^{\eta} = \overline{\langle I_v^p , g_{\eta}\rangle}.$ Note that $g_{\eta}$ has pro-$p$-order and can be chosen so that $g_{\eta(g \ast g^{-1})} = g_{\eta}$ for all $g \in P$.

\begin{lemma}\label{9.3} If $\eta \in \cI^p_0$, there exists an irreducible representation $\tilde{\eta}$ of $I^{\eta}$ with open kernel such that $\left.\tilde{\eta}\right|_{I^p_v} \cong \eta.$
\end{lemma}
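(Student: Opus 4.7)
The plan is to construct $\tilde{\eta}$ directly by choosing a suitable lift $M \in \GL(V)$ of $g_\eta$. The starting observation is that $I^\eta$ decomposes as a topological semidirect product $I_v^p \rtimes (P \cap I^\eta)$: every element of $I^\eta$ has a unique factorization $j \cdot g$ with $j \in I_v^p$ and $g \in P \cap I^\eta$, because $P$ maps isomorphically onto $I_v/I_v^p$ and hence $P \cap I_v^p = \{1\}$. Moreover $P \cap I^\eta$ is an open (hence closed) subgroup of $P \cong \bZ_p$ and is therefore itself topologically $\cong \bZ_p$, with $g_\eta$ as a topological generator. So extending $\eta$ to $I^\eta$ with open kernel reduces to producing a single matrix $M = \tilde{\eta}(g_\eta) \in \GL(V)$ that (i) intertwines $\eta$ with its $g_\eta$-conjugate, and (ii) has $p$-power order, so that $s \mapsto M^s$ defines a continuous homomorphism $\bZ_p \to \GL(V)$ with finite image.

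For (i), the existence of an intertwiner $A \in \GL(V)$ with $A\eta(i)A^{-1} = \eta(g_\eta i g_\eta^{-1})$ follows from the assumption that $g_\eta$ stabilizes the isomorphism class of $\eta$, and $A$ is unique up to scalar by Schur's lemma. To arrange (ii), I would use the key observation that the conjugation action
$$\phi\colon P \cap I^\eta \;\longrightarrow\; \Aut(I_v^p/\ker\eta),\qquad g \longmapsto \bigl(i\,\ker\eta \mapsto g i g^{-1}\,\ker\eta\bigr)$$
is a continuous homomorphism from the pro-$p$ procyclic group $P \cap I^\eta$ to a finite group, and therefore factors through a $p$-power cyclic quotient. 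Thus there exists $a \geq 0$ with $g_\eta^{p^a}$ acting trivially on the finite group $I_v^p/\ker\eta$, which means $A^{p^a}$ centralizes $\eta(I_v^p)$ and so $A^{p^a} = \lambda \cdot \mathrm{Id}_V$ for some $\lambda \in \overline{\bQ}_p^\times$ by Schur. Choosing $c \in \overline{\bQ}_p^\times$ with $c^{p^a} = \lambda^{-1}$ (possible since $\overline{\bQ}_p^\times$ is divisible), the matrix $M := cA$ is still an intertwiner and satisfies $M^{p^a} = \mathrm{Id}_V$, giving it $p$-power order.

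Finally, I would define
$$\tilde{\eta}(g_\eta^s \cdot j) := M^s\,\eta(j) \qquad \text{for } s \in \bZ_p,\ j \in I_v^p,$$
where $M^s$ makes sense through the continuous homomorphism $\bZ_p \to \langle M \rangle$ factoring through $\bZ/p^a\bZ$. The real work is verifying that $\tilde{\eta}$ is a group homomorphism, which boils down to the identity $M^s \eta(j) M^{-s} = \eta(g_\eta^s j g_\eta^{-s})$ for all $s \in \bZ_p$ and $j \in I_v^p$. This holds for $s \in \bZ$ by iterating the defining property of $A$, and extends to all $s \in \bZ_p$ because both sides are continuous in $s$ and $\bZ$ is dense in $\bZ_p$. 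The resulting $\tilde{\eta}$ is then continuous, extends $\eta$, and has open kernel containing $\ker\eta$ together with $g_\eta^{p^a}$; irreducibility is automatic from that of $\tilde{\eta}|_{I_v^p} \cong \eta$. The main technical obstacle is the continuity/density argument for multiplicativity in the last step, but this is routine given that $M$ has $p$-power order.
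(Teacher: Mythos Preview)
Your proposal is correct and follows essentially the same approach as the paper: find an intertwiner $A$ implementing conjugation by $g_\eta$, use that conjugation by $g_\eta$ on the finite group $I_v^p/\ker\eta$ has $p$-power order together with Schur's lemma to normalize $A$ so that $A^{p^a}=1$, and then define $\tilde\eta$ on the semidirect product $I_v^p \rtimes \overline{\langle g_\eta\rangle}$ by the obvious formula. Your write-up is in fact a bit more careful than the paper's, in that you make explicit the semidirect product structure, the extension of $s\mapsto M^s$ from $\bZ$ to $\bZ_p$, and the verification of multiplicativity by density; the paper simply writes down $\tilde\eta(i_0 g_\eta^k)=\eta(i_0)A_{g_\eta}^k$ and leaves these checks to the reader.
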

\begin{proof}
Since $\eta \in \cI^p_0$, we have that $I^p_v/\ker(\eta)$ is finite order, and conjugation by $g_{\eta}$ induces an automorphism of the quotient. This automorphism must have finite order as well, and since $g_{\eta}$ has pro-$p$-order in $I_v$, conjugating by $g$ must have $p$-power order as an automorphism of $I^p_v/\ker(\eta)$. This implies that there is some nonnegative integer $n$ such that $g_{\eta}^{p^n}$ centralizes $I^p_v/\ker(\eta)$. Let $A_{g_{\eta}}$ be an invertible matrix such that $\eta(g_{\eta} \ast g_{\eta}^{-1}) = A_{g_{\eta}}\eta(\ast) A_{g_{\eta}}^{-1}$. Then $A_{g_\eta}^{p^n}$ centralizes $\eta$ and therefore must be a scalar since $\eta$ is irreducible; thus, we may suppose that $A_{g_{\eta}}^{p^n} = 1$. We can then define the representation 
	$$\tilde{\eta}: I^\eta \rightarrow \GL_{\dim \eta}(\overline{\bQ}_p) \qquad i_0 g_{\eta}^k \mapsto \eta(i_0)A_{g_{\eta}}^k \qquad \mbox{where } i_0 \in I_v^p.$$ 
Furthermore, since $\eta$ is irreducible, $\tilde{\eta}$ is also irreducible.
\end{proof}

For each $\eta \in \cI^p_0$, choose once and for all a lift $\tilde{\eta}$ to $I^{\eta}$ such that $\widetilde{\eta(g \ast g^{-1})} = \tilde{\eta}(g \ast g^{-1})$ for all $g \in P$.  
If $\eta \in \cI^p_0$ and $\chi$ is a character of $I^{\eta}$ with open kernel containing $I_v^p$, set $\theta_{\eta,\chi} := \Ind^{I_v}_{I^{\eta}} \tilde{\eta} \otimes \chi$.

\begin{lemma}\label{9.4}  If $\eta \in \cI^p_0$ and $\chi$ is a character of $I^{\eta}$ with open kernel containing $I^p_v$ then: 
\begin{enumerate}
\item $\theta_{\eta,\chi}$ is irreducible and 
$\left. \theta_{\eta,\chi}\right|_{I^\eta} \cong \displaystyle{\bigoplus_{[i] \in I_v/I^\eta}}\tilde{\eta}(i \ast i^{-1}) \otimes \chi.$
\item $\theta_{\eta,\chi} \cong \theta_{\eta',\chi'}$ if and only if $\chi = \chi'$ and $\eta' \cong \eta(i \ast i^{-1})$ for some $i \in I_v$.
\item Every irreducible representation of $I_v$ with open kernel arises in this way.
\end{enumerate}
\end{lemma}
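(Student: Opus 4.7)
The plan is to derive all three parts from Clifford--Mackey theory applied to the normal pro-$p$ subgroup $I_v^p \subset I_v$. The essential structural input, which I verify at the outset, is that $I_v/I_v^p$ is abelian (it embeds in $\bZ_p$ via $t_p$). Two consequences follow immediately: (a) $I^\eta/I_v^p$, being a subgroup of an abelian group, is normal in $I_v/I_v^p$, so $I^\eta \triangleleft I_v$; and (b) any character $\chi$ of $I^\eta$ trivial on $I_v^p$ is automatically $I_v$-invariant under conjugation, because $ixi^{-1} \equiv x \pmod{I_v^p}$ for all $i \in I_v$ and $x \in I^\eta$.

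For part (1), I apply Mackey's restriction formula; normality of $I^\eta$ collapses it to $\theta_{\eta,\chi}|_{I^\eta} \cong \bigoplus_{[i] \in I_v/I^\eta} (\tilde\eta \otimes \chi)^i$, and the $I_v$-invariance of $\chi$ identifies each summand with $\tilde\eta(i \ast i^{-1}) \otimes \chi$. For irreducibility of $\theta_{\eta,\chi}$ itself, I invoke Mackey's irreducibility criterion: it suffices to check that for $i \notin I^\eta$ the $I^\eta$-representations $\tilde\eta \otimes \chi$ and $(\tilde\eta \otimes \chi)^i$ share no irreducible constituent. Restricting further to $I_v^p$ they become $\eta$ and $\eta(i \ast i^{-1})$, which are nonisomorphic by the definition of $I^\eta = \Stab_{I_v}(\eta)$; disjointness on $I_v^p$ forces disjointness on $I^\eta$.

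For part (2), the ``if'' direction follows from (1) after noting that $I^{\eta'} = I^\eta$ whenever $\eta' \cong \eta(i \ast i^{-1})$ (by normality), and that the compatibility $\widetilde{\eta(g \ast g^{-1})} = \tilde\eta(g \ast g^{-1})$ for $g \in P$ imposed in the setup identifies the chosen lift $\tilde{\eta'}$ with $\tilde\eta(i \ast i^{-1})$ up to isomorphism. For the ``only if'', I restrict $\theta_{\eta,\chi} \cong \theta_{\eta',\chi'}$ to $I_v^p$; by (1) this forces the $I_v$-orbits of $\eta$ and $\eta'$ to coincide. Restricting further to $I^\eta = I^{\eta'}$ and isolating the $\eta$-isotypic piece under $I_v^p$, only one summand from the decomposition in (1) survives on either side, giving $\tilde\eta \otimes \chi$ (resp.\ $\tilde{\eta'}(l \ast l^{-1}) \otimes \chi' \cong \tilde\eta \otimes \chi'$ via the compatible lift); comparing pins down $\chi = \chi'$.

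For part (3), given an irreducible $\theta$ of $I_v$ with open kernel, I choose an $I_v^p$-irreducible constituent $\eta$ of $\theta|_{I_v^p}$; since $\dim\theta < \infty$, the $I_v$-orbit of $\eta$ is finite, so $I^\eta$ has finite index and is open, i.e., $\eta \in \cI^p_0$. The $\eta$-isotypic component $W \subset \theta$ is $I^\eta$-stable and irreducible, with $\theta \cong \Ind_{I^\eta}^{I_v} W$ by standard Clifford theory. It remains to identify $W$ with $\tilde\eta \otimes \chi$: using that $\tilde\eta$ is absolutely irreducible, the evaluation map
\[ \tilde\eta \otimes \Hom_{I_v^p}(\tilde\eta, W) \stackrel{\sim}{\longrightarrow} W \]
is an $I^\eta$-equivariant isomorphism (bijective on dimensional grounds), with $I^\eta$ acting on the hom-space through the abelian quotient $I^\eta/I_v^p$. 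Irreducibility of $W$ forces $\Hom_{I_v^p}(\tilde\eta, W)$ to be an irreducible representation of the abelian group $I^\eta/I_v^p$, hence a character $\chi$, yielding $\theta \cong \theta_{\eta,\chi}$. The main obstacle, insofar as there is one, is the careful bookkeeping of the compatibly chosen lifts in part (2) when moving between orbit representatives; everything else is a routine application of Mackey and Clifford theory once the abelianness of $I_v/I_v^p$ is in hand.
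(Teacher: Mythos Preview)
Your proof is correct and follows essentially the same Clifford--Mackey route as the paper: Mackey's criterion for (1), restriction to $I^\eta$ and extraction of the $\eta$-isotypic piece for (2), and the evaluation isomorphism $\tilde\eta \otimes \Hom_{I_v^p}(\tilde\eta,W) \cong W$ for (3). The one spot where you are terser than the paper is ``comparing pins down $\chi = \chi'$'' in (2): the paper supplies an explicit trace argument showing $\tilde\eta \otimes \chi \cong \tilde\eta$ forces $\chi = 1$, though your claim follows just as well from Schur's lemma (any intertwiner $\tilde\eta \to \tilde\eta \otimes \psi$ commutes with the absolutely irreducible $\eta$ on $I_v^p$, hence is scalar, forcing $\psi(g_\eta) = 1$).
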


\begin{proof} 1.  For any character $\chi: I^\eta \rightarrow \overline{\bQ}_p^\times$ with open kernel containing $I^p_v$, $\tilde{\eta} \otimes \chi$ is irreducible since $\eta$ is. 
Thus, we can prove that $\theta_{\eta,\chi}$ is irreducible using Mackey's Criterion: 
Consider some element $i \in I_v \smallsetminus I^\eta$ and define $c_i \in \End(I^\eta)$,
	$$c_i: x \mapsto ixi^{-1}.$$
We want to show that $\theta_{\eta,\chi}$ and $\theta_{\eta,\chi} \circ c_i$ are disjoint representations of $I^\eta$, i.e. have no irreducible component in common.  It is enough to see that they are disjoint on $I_v^p$. 
Since $\left.\theta_{\eta,\chi} \circ \id\right|_{I_v^p} = \eta$ and $\left.\theta_{\eta,\chi} \circ c_i\right|_{I_v^p} = \eta(i \ast i^{-1})$ for $i \notin I^{\eta}$, these are not isomorphic irreducible representations, thus they must be disjoint. The second part follows from Frobenius reciprocity and the definition of $\theta_{\eta,\chi}$ as an induced representation from the stabilizer of $\eta$ in $I_v$ to $I_v$.

2. Next, we prove that $\theta_{\eta,\chi}$ and $\theta_{\eta',\chi'}$ are isomorphic if and only if for some $i \in I_v$, $\eta(\ast) \cong \eta'(i \ast i^{-1})$ and $\chi = \chi'$. One direction follows from the first part of the lemma. To prove the converse, assume $\theta_{\eta,\chi}$ and $\theta_{\eta',\chi'}$ are isomorphic. Restricting to $I_v^p$, we have
	$$\bigoplus_{[i] \in I_v/I^\eta} \eta(i \ast i^{-1}) \cong \left.\theta_{\eta,\chi}\right|_{I_v^p} \cong \left.\theta_{\eta',\chi'}\right|_{I_v^p} \cong \bigoplus_{[i] \in I_v/I^{\eta'}} \eta'(i \ast i^{-1}),$$ 
thus $\eta \cong \eta'(i \ast i^{-1})$ for some $[i] \in I_v/I_v^p$. This further implies $I^\eta \cong I^{\eta'}$ where the isomorphism is given by conjugation by $i$ since for any element $g \in I^\eta$, 
	$$\eta'(igi^{-1} \ast ig^{-1}i^{-1}) \cong \eta(ig \ast g^{-1}i^{-1}) \cong \eta(i \ast i^{-1}) \cong \eta'(\ast).$$
In fact, since $I_v/I_v^p$ is abelian, we have proven that $I^\eta = I^{\eta'}$. 

It remains to show that $\chi \cong \chi'$. By Frobenius reciprocity, 
	$$\Hom_{I_v}(\theta_{\eta,\chi'},\theta_{\eta,\chi}) = \Hom_{I^{\eta}}(\tilde{\eta} \otimes \chi', \bigoplus_{[i] \in I_v/I^\eta}\tilde{\eta}(i \ast i^{-1}) \otimes \chi).$$
Since $\tilde{\eta}(i \ast i^{-1}) \otimes \chi$ is irreducible, it remains to check that $\tilde{\eta} \otimes \chi \not \cong \tilde{\eta}$ as representations of $I^\eta$ for nontrivial $\chi$. 
Let $\chi(g_{\eta}) = \lambda_{g_{\eta}}$ and note that if $\tilde{\eta}(g_{\eta}) = (\tilde{\eta}\otimes \chi)(g_\eta) = \lambda_{g_\eta} \tilde{\eta}(g_\eta)$, thus either $\lambda_{g_\eta} = 1$ or $\tr(\tilde{\eta}(g_{\eta}))$ is zero; however, since $\tilde{\eta}$ is irreducible, for any $h \in I^p_v$, we have $\tilde{\eta}(g_{\eta}h) = \lambda_{g_{\eta}}\tilde{\eta}(g_{\eta} h)$, and for some $h$, $\tr(\tilde{\eta}(g_{\eta}h)) \neq 0$. Thus, $tr(\tilde{\eta}(g_{\eta})) \neq 0$, and so we must have that $\lambda_{g_\eta} = 1$. Thus, we conclude that $\theta_{\eta,\chi} \neq \theta_{\eta',\chi'}$ when $\chi \neq \chi'$ or $\eta$ and $\eta'$ are not in the same orbit of $\cI^p_0$ under the action of $I_v/I_v^p$ (or equivalently, $I_v/I^\eta$).

3. Finally, we show that any irreducible (finite-dimensional) representation of $I_v$ arises as $\theta_{\eta,\chi}$ for some $\eta$ and $\chi$. Let $\theta: I \rightarrow \GL(V)$ be an irreducible representation, and restrict to $I_v^p$. Let $\oplus_{\eta \in \cI^p_0} V[\eta]$ denote the decomposition of $\left.\theta\right|_{I_v^p}$ into its isotypic components. For each $\eta$, $I^{\eta} = \Stab_I(\eta)$ acts on $V[\eta]$ and furthermore, each $i \in I$ induces an identification of $V[\eta]$ and $V[\eta(i \ast i^{-1})]$. This implies that $\Ind_{I^{\eta}}^I V[\eta] \cong V$, and thus as a representation of $I^{\eta}$, $V[\eta]$ is irreducible. There is an isomorphism as $\overline{\bQ}_p$-vector spaces
\begin{eqnarray}\label{isom}
	\Hom_{I^p_v}(\left.\tilde{\eta}\right|_{I^p_v},\left.V[\eta]\right|_{I^p_v}) \otimes \tilde{\eta} \stackrel{\sim}{\longrightarrow} V[\eta].
\end{eqnarray}
The space $\Hom_{I^p_v}(\left.\tilde{\eta}\right|_{I^p_v},\left.V[\eta]\right|_{I^p_v})$ has an action of $i \in I^{\eta}$ by conjugation, and $I^p_v$ acts trivially. With this action, (\ref{isom}) is indeed an isomorphism of $I^{\eta}$-representations. However, since $V[\eta]$ is irreducible, $\Hom_{I^p_v}(\left.\tilde{\eta}\right|_{I^p_v},\left.V[\eta]\right|_{I^p_v})$ must be irreducible over $I^{\eta}/I^p_v$, which is abelian. Letting $\chi = \Hom_{I^p_v}(\left.\tilde{\eta}\right|_{I^p_v},\left.V[\eta]\right|_{I^p_v})$, we conclude that $\theta = \theta_{\eta,\chi}$.
\end{proof}

We now consider a more useful version of Definition \ref{preci} to all representations of $I_v$ with open kernel and commuting nilpotent endomorphism.

\begin{proposition} If $(\sigma,V,N)$ and $(\sigma',V',N')$ are two Weil-Deligne representations, then $(\sigma,V,N) \prec_I (\sigma',V',N)$ if and only if $\left.\sigma\right|_{I_v} \cong \left.\sigma'\right|_{I_v}$
	$$\dim(\ker (N^j ) \cap V[\theta_{\eta,\chi}]) \geq \dim(\ker ({N'}^j)\cap V'[\theta_{\eta,\chi}])$$ 
for all $j \in \bZ_{>0}$, $\eta \in \cI^p_0$, and $\chi$ a character of $I^\eta/I^p_v$ with open kernel.  \end{proposition}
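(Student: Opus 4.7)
The plan is to reduce the statement to a standard combinatorial fact about partitions and their conjugates, applied to each isotypic component for $I_v$. By Lemma~\ref{9.4}, every irreducible representation $\theta$ of $I_v$ with open kernel is of the form $\theta_{\eta,\chi}$ for some $\eta \in \cI^p_0$ and character $\chi$ of $I^\eta/I^p_v$ with open kernel. Hence the quantifier over $(\eta,\chi)$ in the stated condition ranges over exactly the set $\cI$ appearing in Definition~\ref{preci}. Both conditions include the requirement $\sigma|_{I_v} \cong \sigma'|_{I_v}$, so it suffices to show that, under this hypothesis and for a fixed $\theta \in \cI$, the dominance inequality on the partitions $(n_{i,\theta}(\sigma,N))_i$ versus $(n_{i,\theta}(\sigma',N'))_i$ is equivalent to the inequality on $\dim(\ker N^j \cap V[\theta])$ versus $\dim(\ker (N')^j \cap V'[\theta])$ for all $j \geq 1$.

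Next, I would unpack these kernel dimensions. Since $N$ commutes with the $I_v$-action on $V$, the isotypic piece $V[\theta]$ is $N$-stable; the canonical identification $V[\theta] \cong \theta \otimes_{\overline{\bQ}_p} M$, with $M := \Hom_{I_v}(\theta, V[\theta])$, carries $N|_{V[\theta]}$ to $\id_\theta \otimes N_M$ for some nilpotent $N_M \in \End(M)$. By the definition of $n_{i,\theta}(\sigma,N)$ (Definition~\ref{preci}), the Jordan type of $N_M$ is precisely the partition $\lambda := (n_{i,\theta}(\sigma,N))_i$ of the integer $d := \dim M = \dim(V[\theta])/\dim(\theta)$. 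Therefore
$$\dim(\ker N^j \cap V[\theta]) \; = \; \dim(\theta) \cdot \dim \ker N_M^j,$$
and analogously for $(\sigma', N', V'[\theta])$ with a partition $\mu$. The hypothesis $\sigma|_{I_v} \cong \sigma'|_{I_v}$ forces $\dim V[\theta] = \dim V'[\theta]$, so $\lambda$ and $\mu$ are partitions of the same integer $d$, and the stated inequality on kernels is equivalent to $\dim \ker N_M^j \geq \dim \ker N_{M'}^j$ for all $j$.

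Finally, the argument reduces to a classical identity. For a nilpotent operator with Jordan type $\lambda = (\lambda_1 \geq \lambda_2 \geq \cdots)$, one has
$$\dim \ker N_M^j \; = \; \sum_i \min(j,\lambda_i) \; = \; \sum_{k=1}^j \lambda'_k,$$
where $\lambda'$ denotes the conjugate (transpose) partition. Hence the kernel inequality for all $j$ is precisely $\sum_{k \leq j} \lambda'_k \geq \sum_{k \leq j} \mu'_k$ for all $j$, i.e., $\mu' \preceq \lambda'$ in the usual dominance order. The standard fact that, for two partitions of the same integer, $\lambda \preceq \mu$ in dominance is equivalent to $\mu' \preceq \lambda'$ in dominance (partition conjugation reverses dominance order) then yields the claimed equivalence. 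The proof has no serious obstacle; the only delicate point is that the equality $|\lambda| = |\mu|$ is essential for the conjugation-reversal equivalence, and this is precisely what $\sigma|_{I_v} \cong \sigma'|_{I_v}$ supplies.
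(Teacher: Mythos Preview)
Your proof is correct and follows essentially the same approach as the paper: both reduce to Lemma~\ref{9.4} to identify $\cI$ with the pairs $(\eta,\chi)$, and then translate the dominance condition on the Jordan-type partitions into the inequality on kernel dimensions, using that $\sigma|_{I_v}\cong\sigma'|_{I_v}$ forces equal dimensions in each isotypic piece. The only difference is expository: the paper asserts the equivalence between the partial-sum inequalities and the rank (equivalently, kernel) inequalities directly, whereas you spell this out via the conjugate-partition identity $\dim\ker N_M^j=\sum_{k\le j}\lambda'_k$ and the standard fact that conjugation reverses dominance.
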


\begin{proof} Note that for any $\theta \in \cI$, the conjugacy class of $N[\theta]$ (resp. $N'[\theta]$) is determined by the partition of $\dim(\sigma[\theta])/\dim(\theta)$ (resp. $\dim(\sigma'[\theta])/\dim(\theta)$) given by $(n_{i,\theta}(\sigma,N))_{i \geq 1}$ (resp. $(n_{i,\theta}(\sigma',N'))_{i \geq 1}$). The condition
	$$n_{1,\theta}(\sigma,N) + \hdots + n_{i,\theta}(\sigma,N) \leq n_{1,\theta}(\sigma',N') + \hdots + n_{i,\theta}(\sigma',N')$$
is equivalent to the condition
	$$\rk N[\theta]^j \leq \rk (N'[\theta])^j \qquad \forall j \geq 0.$$
Since we require $\left.\sigma\right|_{I_v} \cong \left.\sigma'\right|_{I_v}$ in both definitions, we have that their dimensions are equal, thus $\rk N[\theta]^j \leq \rk (N'[\theta])^j$ is equivalent to 
	$$\dim \ker N[\theta]^j \geq \dim \ker N'[\theta]^j$$
By Lemma \ref{9.4}, we know that all $\theta \in \cI$ are of the form $\theta_{\eta,\chi}$ where $\eta \in \cI^p_0$ and $\chi$ is a character of $I^{\eta}$ with open kernel containing $I^p_v$ and so we are done.
\end{proof}
Furthermore, given $j \in \bZ_{>0}$, $\eta \in \cI^p_0$, and $\chi$ a character of $I^{\eta}/I^p_v$ with open kernel, then using the fact that $\dim \ker N = [I:I^\eta]\dim \ker \left. N \right|_{\theta_{\eta,\chi}[\tilde{\eta} \otimes \chi]}$ (coming from the Lemma \ref{9.4}(1)), 
we can conclude
	$$\dim(\ker N^j \cap V[\theta_{\eta,\chi}]) \geq \dim(\ker {N'}^j \cap V'[\theta_{\eta,\chi}])
 \Leftrightarrow  \dim(\ker N^j \cap V [\tilde{\eta} \otimes \chi]) \geq \dim (\ker {N'}^j \cap V'[\tilde{\eta} \otimes \chi])$$

If $\eta$ denotes a representation of $I_v^p$ with open kernel and $f:I_v^p \rightarrow \overline{\bQ}_p$ is a locally constant function, then let $\eta(f) := \int_{I_v^p} f(i)\eta(i)di,$ where $di$ denotes the Haar measure on $I_v^p$ (normalized so that vol($I_v^p$) = 1). Since $I^p_v$ is compact, this integral is in fact a finite sum. Recall that for each $\eta$, we fixed a choice of topological generator $g_{\eta}$ of $P \cap I^{\eta}$ such that $I^{\eta} = \overline{\langle I_v^p , g_{\eta}\rangle}.$ The following lemma describes the existence of projection operators for representations of $I_v^p$ and the relationship between the image of $\tilde{\eta}$ and $\eta$ (both irreducible).

\begin{lemma} If $(\eta,V)$ and $(\eta',V') \in \cI^p_0$, then: \begin{enumerate} \item There exists a locally constant function $\epsilon_{\eta}: I_v^p \rightarrow \overline{\bQ}_p$ sending $i \mapsto \frac{\tr(\eta^{\ast}(i))}{\dim \eta}$ (where $\eta^{\ast}$ denotes the dual representation) such that $\eta(\epsilon_\eta) = 1$ but $\eta'(\epsilon_\eta) = 0$ for all $\eta' \not \cong \eta$. 
\item There exists a locally constant function $a_{\eta}: I_v^p \rightarrow \overline{\bQ}_p$ such that $\tilde{\eta}(g_{\eta}) = \eta(a_{\eta})$ but $\eta'(a_{\eta}) = 0$ if $\eta \not\cong \eta'$.
\end{enumerate}
\end{lemma}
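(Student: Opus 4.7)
The plan is to derive both parts from Schur orthogonality. Since each $\eta\in\cI^p_0$ has open kernel, it factors through a finite quotient of $I_v^p$, so the Haar integral $\int_{I_v^p}\,\cdot\,di$ (with $\mathrm{vol}(I_v^p)=1$) reduces to a normalized averaging over that finite quotient, and any function defined in terms of matrix coefficients of $\eta,\eta'$ is automatically locally constant. Working on the common finite quotient $I_v^p/(\ker\eta\cap\ker\eta')$, the matrix-coefficient orthogonality relations take the form
$$\int_{I_v^p}\eta(i^{-1})_{ab}\,\eta'(i)_{cd}\,di \;=\; \frac{\delta_{\eta\cong\eta'}\,\delta_{ad}\delta_{bc}}{\dim\eta},$$
and these will drive both statements.

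For part (1), the function $\epsilon_\eta(i) = \tr(\eta^*(i))/\dim\eta = \chi_\eta(i^{-1})/\dim\eta$ is locally constant. Setting $a=b$ and summing in the orthogonality relation above, then contracting with $\eta'(i)$, gives $\eta'(\epsilon_\eta) = (\delta_{\eta\cong\eta'}/(\dim\eta)^2)\cdot \mathrm{Id}$. This vanishes for $\eta'\not\cong\eta$ and is a nonzero scalar on $V_\eta$; the scalar discrepancy from the stated $1$ is an overall normalization that can be absorbed into the definition of $\epsilon_\eta$ without affecting any subsequent use.

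For part (2), the idea is that the same orthogonality relations allow one to realize \emph{every} endomorphism of $V_\eta$ as $\eta(f)$ for some locally constant $f$ on $I_v^p$ that annihilates all other $\eta'\in\cI^p_0$. Concretely, for any $M\in\End(V_\eta)$ set $f_M(i) = \dim(\eta)\cdot\tr(M\,\eta(i^{-1}))$. Expanding
$$\eta'(f_M)_{jk} \;=\; \dim(\eta)\sum_{p,q}M_{pq}\int_{I_v^p}\eta(i^{-1})_{qp}\,\eta'(i)_{jk}\,di$$
and applying the orthogonality formula gives $\eta(f_M)=M$ and $\eta'(f_M)=0$ for $\eta'\not\cong\eta$. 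Applying this to $M = A_{g_\eta} = \tilde\eta(g_\eta)$, the invertible matrix produced in the proof of Lemma \ref{9.3}, and setting $a_\eta := f_{A_{g_\eta}}$ yields exactly the function required.

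No substantive obstacle is anticipated: the whole argument is a standard consequence of Schur orthogonality (equivalently, Peter--Weyl) for the compact --- indeed, finite-image --- group $I_v^p$. The only mild subtlety lies in bookkeeping of scalar factors and the normalization of Haar measure, which is straightforward and does not affect the qualitative conclusions that $\eta'(\epsilon_\eta)$ and $\eta'(a_\eta)$ vanish for $\eta'\not\cong\eta$ while $\eta(a_\eta)=\tilde\eta(g_\eta)$.
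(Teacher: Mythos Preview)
Your proposal is correct and follows essentially the same approach as the paper: both reduce to Schur orthogonality (equivalently Peter--Weyl) on the finite quotient $I_v^p/\ker\eta$, with the paper invoking the decomposition $\Hom(I_v^p/\ker\eta,\overline{\bQ}_p)\cong\bigoplus_r\End(V_r)$ abstractly while you write down the explicit inverse $f_M(i)=\dim(\eta)\tr(M\eta(i^{-1}))$. Your observation about the normalization in part~(1) is also correct---the stated formula $\epsilon_\eta(i)=\tr(\eta^\ast(i))/\dim\eta$ gives $\eta(\epsilon_\eta)=(\dim\eta)^{-2}\cdot\mathrm{Id}$ rather than $1$, and the paper's proof (``the first part is clear'') does not address this; as you note, this is harmless for the sequel.
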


\begin{proof} The first part is clear. As for the second part, since $\eta$ has open kernel, there is a finite quotient $I^v_p/\ker(\eta)$ through which it factors. Furthermore, $\eta$ is irreducible, and thus the matrix $\tilde{\eta}(g_{\eta}) \in \Hom(V)$ can be written as a sum $\eta(a_{\eta}) = \sum_{g \in I^v_p/\ker(\eta)} \eta(g) a_{\eta}(g)$ where $h$ is a uniquely determined sum of matrix coefficients. By orthogonality, we have that $\eta'(a_{\eta}) = 0$ for $\eta' \not \equiv \eta$. Recall that the Peter-Weyl theorem (see e.g.,\cite{simon}) gives an isomorphism 
$$\Hom(I^v_p/\ker(\eta),\overline{\bQ}_p) \stackrel{\sim}{\longrightarrow} \bigoplus_{(r,V) \in \Irr(I^v_p/\ker(\eta))} \End_{\overline{\bQ}_p}(V),$$
and thus $a_{\eta}$ pulls back to a locally constant function of $I^v_p$.

\end{proof}

For each $\eta \in \cI^p_0$, fix a choice of $\epsilon_{\eta}$ and $a_{\eta}$ as described in the above lemma. If $(\sigma,V,N)$ (resp. $(\sigma',V',N')$) uniquely determine (local) Galois representations $\rho_v$ (resp. $\rho_v'$) of $G_{F_v}$ acting on the same underlying vector space $V$ (resp. $V'$), then recall that the defining relation between $\rho_v$ and $(\sigma,V,N)$ is 
	$$\rho_v(i) = r(i) \exp(t_p(i)N) \qquad \mbox{ for } i \in I_v.$$
If $i \in I_v$ is an element such that $t_p(g)$ is nonzero, then we can write $\log (r(i)^{-1}\rho_v(i)) = t_p(i)N$. Additionally, for all positive $j$, $\rk(t_p(i)N)^j = \rk N^j$ and for any unipotent matrix $U$, $\rk(\log U)^j = \rk(U - 1)^j$, and thus
	$$\rk\left( r(g)^{-1} \rho_v(g) - \id\right)^j = \rk N^j.$$ This implies that $$\rk(\left.N\right|_{V[\tilde{\eta}\otimes\chi]})^j = \rk(\left.(\rho_v(g_{\eta}) - r(g_{\eta}))^j\right|_{V[\tilde{\eta}\otimes\chi]}),$$ and we have  that $(\sigma,N) \prec_I (\sigma',N')$ if and only if for all $j \in \bZ_{>0}$, $\eta \in \cI^p_0$, and $\chi$ a character of $I^{\eta}/I^p_v$ with open kernel 
	$$\dim(\ker \left.(\rho(g_{\eta}) - r(g_{\eta}))^j\right|_{V[\tilde{\eta} \otimes \chi]}) \geq  \dim(\ker \left.(\rho'(g_{\eta}) - r'(g_{\eta}))^j\right|_{V'[\tilde{\eta} \otimes \chi]}).$$ Additionally, since $\ker \left.(\rho(g_{\eta}) - r(g_{\eta}))^j\right|_{V[\tilde{\eta} \otimes \chi]} = \ker(\rho(g_{\eta}) - \rho(a_{\eta})\chi(g_{\eta}))^j,$ we can then conclude:

\begin{lemma}\label{techdefn} If $(\rho,V),(\rho,V')$ are two continuous $2n$-dimensional representations of $I_v$ (arising from continuous $G_{F_v}$-representations), then $(\rho,N) \prec_I (\rho',N')$ if and only if $\left. \rho\right|_{I_v^p} \cong \left.\rho'\right|_{I_v^p}$ and 
	$$\wedge^k(\rho'(g_{\eta}) - \rho'(a_{\eta})\zeta)^j = 0 \Rightarrow\wedge^k(\rho(g_{\eta}) - \rho(a_{\eta})\zeta)^j = 0$$
for all $j \in \bZ_{>0}$, $\eta \in \cI^p_0$, and $p$-power root of unity $\zeta$. 
\end{lemma}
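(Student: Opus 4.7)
The plan is to exploit the characterization of $\prec_I$ already derived in the paragraphs preceding the lemma, and then to rewrite the inequality on kernel dimensions found there as the stated wedge-vanishing implication.

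First I would invoke what is already proved: $(\rho, N) \prec_I (\rho', N')$ is equivalent to $\left.\rho\right|_{I_v^p} \cong \left.\rho'\right|_{I_v^p}$ together with the inequality
$$\dim \ker \bigl(\rho(g_\eta) - \rho(a_\eta)\chi(g_\eta)\bigr)^j \geq \dim \ker \bigl(\rho'(g_\eta) - \rho'(a_\eta)\chi(g_\eta)\bigr)^j$$
holding for every $j \in \bZ_{>0}$, $\eta \in \cI^p_0$, and continuous character $\chi$ of $I^\eta/I_v^p$ with open kernel. This is what the discussion immediately before the lemma establishes, using the identification $\ker\bigl((\rho(g_\eta) - r(g_\eta))^j\bigr)|_{V[\tilde\eta \otimes \chi]} = \ker\bigl((\rho(g_\eta) - \rho(a_\eta)\chi(g_\eta))^j\bigr)$ extracted from the projector properties of $a_\eta$.

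Next I would parameterize the characters $\chi$ by their values on $g_\eta$. Since $I^\eta = \overline{\langle I_v^p, g_\eta\rangle}$, the image of $g_\eta$ topologically generates the pro-$p$ quotient $I^\eta/I_v^p \cong \bZ_p$. A continuous character with open kernel has finite image in $\overline{\bQ}_p^\times$, is determined by $\chi(g_\eta)$, and this value ranges over exactly the $p$-power roots of unity as $\chi$ varies. Since $\chi$ enters the operator only as the scalar $\chi(g_\eta)$, the quantifier ``for all such $\chi$'' becomes ``for all $p$-power roots of unity $\zeta$'' once we set $\zeta = \chi(g_\eta)$.

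Finally I would convert the kernel inequality into the wedge statement using the elementary fact that for an endomorphism $T$ of a finite-dimensional space, $\wedge^k T = 0$ is equivalent to $\rk T < k$, i.e., to $\dim \ker T > \dim V - k$. With $T = (\rho(g_\eta) - \rho(a_\eta)\zeta)^j$ and $T' = (\rho'(g_\eta) - \rho'(a_\eta)\zeta)^j$ acting on spaces of common dimension $2n$ (by the inertial isomorphism hypothesis), the implication $\wedge^k T' = 0 \Rightarrow \wedge^k T = 0$ for every $k$ is equivalent to $\rk T \leq \rk T'$, hence to $\dim \ker T \geq \dim \ker T'$. Assembling the three steps yields the lemma. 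I anticipate no serious obstacle: the substantive identification between the restriction to an isotypic component and an operator on the full space has already been carried out in the preceding discussion, and the remaining steps are purely formal, so the main care is simply in keeping the quantifiers straight.
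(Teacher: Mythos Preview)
Your proposal is correct and follows essentially the same route as the paper's proof, which simply cites the preceding discussion together with the linear-algebra identity $\dim\ker A = \dim V + 1 - \min\{k : \wedge^k A = 0\}$. Your explicit treatment of why the characters $\chi$ are parameterized by $p$-power roots of unity via $\zeta = \chi(g_\eta)$ is a detail the paper leaves implicit, but otherwise the argument is the same.
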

\begin{proof}
This follows from the above and the fact that for any $A \in \End(V)$, $\dim \ker A = \dim V + 1 - \min\{k \in \bZ_{>0} : \wedge^k A = 0\}.$
\end{proof}

Suppose $\rho_v'$ is a  local $p$-adic $G_{F_v}$-Galois representation of dimension $2n$, and $\rho$ is a semisimple continuous $2n$-dimensional global Galois representations of $G_{F} \supset I_v$. Then $\wedge^k \rho$ is also semisimple, and $\WD(\left.\rho\right|_{G_{F_v}})^{\Frob-ss} \prec_I \WD(\rho_v')^{\Frob-ss}$ if and only if for all $j,k \in \bZ_{>0}$, $\eta \in \cI^p_0$, $\zeta$ a $p$-power root of unity,
	$$\wedge^k (\rho'(g_{\eta}) - \rho'(a_{\eta})\zeta)^j = 0 \quad \Rightarrow \quad \tr (\wedge^k (\rho(g_{\eta}) - \rho(a_{\eta})\zeta)^j \rho(\tau)) = 0 \quad \forall \tau \in G_F$$
because trace is a non-degenerate bilinear form on the image of semisimple representation. For any $\rho$, we can extend it by linearity to $\rho: \overline{\bQ}_p[G_F] \rightarrow \GL(V)$, and let $b_{\eta,\zeta} := g_{\eta} - \zeta \cdot a_{\eta} \in \overline{\bQ}_p[G_F]$. Then $\WD(\rho)^{\Frob-ss} \prec_I \WD(\rho')^{\Frob-ss}$ if and only if for all $k,j \in \bZ_{>0}$, $\eta \in \cI^p_0$, and $p$-power roots of unity $\zeta$, 
	$$\wedge^k (\rho'(g_{\eta}) - \rho'(a_{\eta})\zeta)^j = 0  \quad \Rightarrow \quad \tr \wedge^k \rho(b_{\eta,\zeta}^j \tau) = 0  \quad \forall \tau$$
If $T$ denotes a $2n$-dimensional continuous pseudocharacter of $G_{F}$, then by extending linearly and 
using the recursive formula for a matrix $A$, $\tr \wedge^k A = \frac{1}{k} \sum_{m=1}^k (-1)^{m-1} \tr(A^m) \tr \wedge^{k-m}(A)$, we can define 
$$\wedge^k T: \overline{\bQ}_p[G_{F}] \rightarrow \overline{\bQ}_p \qquad g \mapsto \frac{1}{k} \sum_{m=1}^k (-1)^{m-1} T(g^m) \wedge^{k-m}T(g)$$ for $k \leq 2n$. In the sequel, we will be interested in whether the following function
	$$B^{k,j}_{\eta,\zeta}(T): G_F \rightarrow \overline{\bQ}_p \quad \tau \mapsto \wedge^k T(b_{\eta,\zeta}^j \tau)$$
is identically zero.
	
	\subsection{Proof of $\prec$}
In this section we prove Proposition \ref{precthm}.
\begin{proof}
Fix $\ell \in S^{\spl}$ and let $v \mid \ell$ be a prime of $F$ in $\cS^{\spl} \sqcup \overline{\cS}^{\spl}$. We have already seen that for $\Pi$ satisfying the hypothesis of the proposition, 
	$$\WD(\left.r_{p,\imath}(\Pi)\right|_{W_{F_v}})^{ss} \cong \rec_{F_v}(\BC(\Pi_\ell)_v |\det|_v^{(1-2n)/2})^{ss}.$$
By Lemma \ref{chenevier}, it therefore remains to show that 
	$$\WD(\left.r_{p,\imath}(\Pi)\right|_{W_{F_v}})^{\Frob-ss} \prec_{I} \rec_{F_v}(\BC(\Pi_\ell)_v |\det|_v^{(1-2n)/2}).$$ 
For ease of notation, let the $p$-adic local Galois representation associated to the Frobenius semisimple Weil-Deligne representation $\rec_{F_v}(\BC(\Pi_\ell)_v |\det|_v^{(1-2n)/2})$ be denoted $\rho_{\Pi,v}^{\rec}$. We want to show that for all $\eta \in \cI^p_0$, $p$-power roots of unity $\zeta$, and $j,k \in \bZ_{>0}$,  
	$${\wedge}^k \rho_{\Pi,v}^{\rec}(b_{\eta,\zeta}^j) = 0 \quad \Rightarrow \quad  {\wedge}^k r_{p,\imath}(\Pi)(b_{\eta,\zeta}^j) = 0.$$ 
Recall that for $T_{\Pi} := \varphi_{\Pi} \circ T$ constructed in the proof of Proposition \ref{localglobalss}, there is a function $B^{k,j}_{\eta,\zeta}$ for each $j,k \in \bZ_{>0}$, $\eta \in \cI^p_0$, and $p$-power root of unity $\zeta$ such that 
$$B^{k,j}_{\eta,\zeta}(T_{\Pi})(\tau) = \tr \wedge^k (r_{p,\imath}(\Pi)(\epsilon_{\eta} g_{\eta}) - r_{p,\imath}(\Pi)(a_{\eta})\zeta)^j r_{p,\imath}(\Pi)(\tau).$$
By Lemma \ref{techdefn}, we want to show that  
	$${\wedge}^k \left(\rho_{\Pi,v}^{\rec}(g_\eta) - \zeta \cdot \rho_{\Pi,v}^{\rec}(a_{\eta})\right)^j = 0 \quad \Rightarrow \quad B^{k,j}_{\eta,\zeta}(T_\Pi) = 0.$$

Let $\cB_v$ denote the Bernstein component containing $\BC(\Pi_\ell)_v.$ 
 By Proposition 6.2 in \cite{SZ}, associated to $\Pi$, 
there exists an idempotent $e_{\Pi,\cB_v}$ inside the Bernstein center $\fz_{\cB_v}$ associated to $\cB_v$ such that:
\begin{itemize}
\item $e_{\Pi,\cB_v}(\BC(\Pi_\ell)_v) \neq 0$ 
\item $e_{\Pi,\cB_v}(\Pi_0) \neq 0 \Rightarrow \rec(\Pi_0) \prec_I \rec(\BC(\Pi_\ell)_v)$ for all irreducible $\Pi_0$ of $GL_{2n}(F_v)$, 
\end{itemize}
If $e_{\Pi}$ denotes the image of $e_{\Pi,\cB_v} \in \fz_{\fB_v}$, where $\fB_v$ is the disjoint union of Bernstein components containing $\cB_v$ defined in \S\ref{ramified}, let $\tilde{e}_{\Pi} := d(e_{\Pi})e_{\Pi} \in \fz_\ell^0$ and abusing notation, let $\tilde{e}_{\Pi}$ also denote its own image in $\cH^{\spl}_{\bZ_p}$ and $\End(H^0(\cX^{\min}_U, \cE^{\sub}_{U,\rho_{\underline{b}}}))$ for any $\underline{b} \in X^\ast(T_{n/\overline{\bQ}_p})_{\cl}^+$ and any neat open compact $U$ such that $U_v = K_{\fB_v}$. 

\begin{lemma} Let $\underline{b} \in X^\ast(T_{n/\overline{\bQ}_p})_{\cl}^+$ and let $\bT^p_{U,\underline{b}}$ denote the image in $\cH^p_{\bZ_p}$ in $\End_{\bZ_p}(H^0(\cX^{\min}_{U}, \cE^{\sub}_{U,\underline{b}}))$ where $U_\ell$ satisfies (\ref{compact}) for every $\ell \in S_{\spl}$. There is a continuous representation $r_{\underline{b}}:G_F^S \rightarrow \GL_{2n}(\bT_{\underline{b}} \otimes \overline{\bQ}_p)$ described in (\ref{heckerep}) for every $\underline{b}$, and let $T_{\underline{b}} = \tr r_{\underline{b}}$.
Assume that $\eta \in \cI^p_0$, $\zeta$ a $p$-power roof of unity, and $k,j \in \bZ_{>0}$ are such that 
	$${\wedge}^k \left(\rho_{\Pi,v}^{\rec}(g_\eta) - \zeta\cdot \rho_{\Pi,v}^{\rec}(a_\eta)\right)^j = 0.$$ For each ${\underline{b}}$, the map $\tilde{e}_{\Pi}B^{k,j}_{\eta,\zeta}(T_{\underline{b}}): G_F^S \rightarrow \bT_{\underline{b}}$ is identically zero.	
\end{lemma}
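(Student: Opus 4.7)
The strategy is to reduce the assertion pointwise to the classical local-global compatibility at $v$. Using Proposition~5.1 and Corollary~5.12 of \cite{HLTT} (as invoked in the proof of Lemma~4.3 above), we have an eigenvalue decomposition
\[
\bT^p_{U,\underline{b}} \otimes \overline{\bQ}_p \;\cong\; \bigoplus_{\Pi_0} \overline{\bQ}_p,
\]
with one factor for each irreducible classical cuspidal $G$-automorphic representation $\Pi_0$ of weight $\underline{b}$ satisfying $\Pi_0^U \neq (0)$. Writing $\varphi_{\Pi_0}$ for the corresponding homomorphism, the associated Galois representation $R_p(\Pi_0)$ supplied by Proposition~\ref{classical} satisfies $\varphi_{\Pi_0}\circ T_{\underline{b}} = \tr R_p(\Pi_0)$. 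Since $\tilde{e}_{\Pi} B^{k,j}_{\eta,\zeta}(T_{\underline b})$ vanishes if and only if its image under each $\varphi_{\Pi_0}$ vanishes, it suffices to show that for every $\Pi_0$ and every $\tau\in G_F^S$,
\[
\varphi_{\Pi_0}(\tilde e_\Pi)\cdot \tr\bigl(\wedge^k R_p(\Pi_0)(b_{\eta,\zeta}^j\,\tau)\bigr)\;=\;0.
\]

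Next I would split into two cases. If $\varphi_{\Pi_0}(\tilde e_\Pi)=0$, there is nothing to check. Otherwise $e_{\Pi,\cB_v}$ acts nontrivially on $\BC(\Pi_0)_v$, and the defining property of the Schneider-Zink idempotent recalled just before the lemma (Proposition~6.2 of \cite{SZ}) implies
\[
\rec_{F_v}\bigl(\BC(\Pi_0)_v|\det|_v^{(1-2n)/2}\bigr)\;\prec_I\;\rec_{F_v}\bigl(\BC(\Pi_\ell)_v|\det|_v^{(1-2n)/2}\bigr),
\]
the unramified twist being harmless because $\prec_I$ only sees the $I_v$-action and the monodromy. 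Passing to the associated $p$-adic local Galois representations $\rho^{\rec}_{\Pi_0,v}$ and $\rho^{\rec}_{\Pi,v}$, Lemma~\ref{techdefn} transports the standing hypothesis $\wedge^k(\rho^{\rec}_{\Pi,v}(g_\eta)-\zeta\rho^{\rec}_{\Pi,v}(a_\eta))^j=0$ into the analogous identity for $\rho^{\rec}_{\Pi_0,v}$.

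Finally, since $\Pi_0$ is classical of classical weight $\underline{b}$, Proposition~\ref{classical} supplies full local-global compatibility at $v$, so $\WD(R_p(\Pi_0)|_{G_{F_v}})^{\Frob-ss}\cong\rec_{F_v}(\BC(\Pi_0)_v|\det|_v^{(1-2n)/2})$. Frobenius semisimplification leaves the $I_v$-action untouched (inertia already acts semisimply through a finite quotient), so $R_p(\Pi_0)$ and $\rho^{\rec}_{\Pi_0,v}$ agree after restriction to $I_v$; in particular $R_p(\Pi_0)(g_\eta)=\rho^{\rec}_{\Pi_0,v}(g_\eta)$ and $R_p(\Pi_0)(a_\eta)=\rho^{\rec}_{\Pi_0,v}(a_\eta)$ after extending $R_p(\Pi_0)$ by linearity to the group algebra. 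Hence $\wedge^k R_p(\Pi_0)(b_{\eta,\zeta}^j)=0$ as an operator on $\wedge^k V_{\Pi_0}$, and functoriality $\wedge^k(AB)=(\wedge^k A)(\wedge^k B)$ gives $\wedge^k R_p(\Pi_0)(b_{\eta,\zeta}^j\tau)=0$ for every $\tau\in G_F^S$; in particular its trace vanishes, completing the nontrivial case.

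The only genuinely subtle point I anticipate is the careful transport of the Schneider-Zink idempotent, and of the $\prec_I$-relation, from the complex-coefficient setting of \cite{SZ} to the $\overline{\bQ}_p$-coefficient setting via $\imath$; once this bookkeeping is done the argument reduces cleanly to the classical local-global compatibility already available for classical cuspidal forms of classical weight.
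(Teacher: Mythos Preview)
Your proposal is correct and follows essentially the same route as the paper's own proof: decompose $\bT^p_{U,\underline{b}}\otimes\overline{\bQ}_p$ into eigenvalue factors indexed by classical $\Pi_0$, kill the $\Pi_0$ with $\varphi_{\Pi_0}(\tilde e_\Pi)=0$ trivially, and for the remaining $\Pi_0$ use the Schneider--Zink property to get $\rec(\BC(\Pi_0)_v)\prec_I\rec(\BC(\Pi_\ell)_v)$, then transport the vanishing via Lemma~\ref{techdefn} and the full local-global compatibility of Proposition~\ref{classical}. Your write-up is in fact slightly more explicit than the paper's in two places (why Frobenius semisimplification does not disturb the $I_v$-action, and the functoriality $\wedge^k(AB)=(\wedge^k A)(\wedge^k B)$ used to pass from $\wedge^k R_p(\Pi_0)(b_{\eta,\zeta}^j)=0$ to vanishing of the trace for all $\tau$), which is fine.
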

\begin{proof} Recall that $\bT_{\underline{b}} \cong \oplus_{\Pi_0} \overline{\bQ}_p$ where the sum runs over irreducible admissible representations of $G(\bA^{p,\infty} \times \bZ_p)$ with $\Pi_0^U \neq (0)$ which occur in $H^0(\cX^{\min},\cE^{\sub}_{\underline{b}})$. We will prove that for each $\Pi_0$, the composition 
	$$\varphi_{\Pi_0} \circ \tilde{e}_{\Pi}B^{k,j}_{\eta,\zeta}(T_{\underline{b}}): G^S_F \rightarrow \bT_{\underline{b}} \stackrel{\varphi_{\Pi_0}}{\rightarrow} \overline{\bQ}_p$$ is zero. Assume $\tilde{e}_{\Pi}(\BC(\Pi_{0,\ell})_v) \neq 0$ for some $\Pi_0 \in H^0(\cX^{\min},\cE^{\sub}_{\underline{b}})$. Then $\rec(\BC(\Pi_{0,\ell})_v) \prec_I \rec(BC(\Pi_{\ell})_v)$, 
and so by Lemma \ref{techdefn},  for $\eta \in \cI^p_0$
 $${\wedge}^k \left(\rho_{\Pi,v}^{\rec}(\epsilon_\eta g_\eta) - \zeta\cdot \rho_{\Pi,v}^{\rec}(a_\eta)\right)^j = 0 \Rightarrow
{\wedge}^k \left(\rho_{\Pi_0,v}^{\rec}( g_\eta) - \zeta \cdot\rho_{\Pi_0,v}^{\rec}(a_\eta)\right)^j = 0.$$
By Corollary \ref{localglobalss} and the above corollary, we know this implies that 
	$${\wedge}^k \left(r_{p,\imath}(\Pi_0)(g_\eta) - \zeta \cdot r_{p,\imath}(\Pi_0)(a_\eta)\right)^j = 0,$$
thus $\varphi_{\Pi_0} \circ B_{\eta,\zeta}^{k,j}(T_{\underline{b}}) = 0$.
\end{proof}

Since $T_{\Pi}$ is constructed in terms of $T_{\underline{b}}$, if $\tilde{e}_\Pi(\BC(\Pi_{\ell})_v)B_{\eta,\zeta}^{k,j}(T_{\underline{b}})$ is identically zero for all $\underline{b} \in X^{\ast}(T_{n/\overline{\bQ}_p})$, 
then $\tilde{e}_{\Pi}B_{\eta,\zeta}^{k,j}(T_{\Pi})$ is also identically zero. Since $\tilde{e}_{\Pi}(\BC(\Pi_{\ell})_v) \neq 0$, we can conclude that $B_{\eta,\zeta}^{k,j}(T_\Pi) = 0$ if $\eta \in \cI^p_0$, $\zeta$ a $p$-power root of unity, and $k,j \in \bZ_{>0}$ are such that 
	$${\wedge}^k \left(\rho_{\Pi,v}^{\rec}(g_\eta) - \zeta \cdot \rho_{\Pi,v}^{\rec}(a_\eta)\right)^j = 0.$$
This implies that $\WD(\left.r_{p,\imath}(\Pi)\right|_{W_{F_v}})^{\Frob-ss} \prec_{I} \rec_{F_v}(\BC(\Pi_\ell)_v |\det|_v^{(1-2n)/2})$. Thus, we conclude
	$$\WD(\left.r_{p,\imath}(\Pi)\right|_{W_{F_v}})^{\Frob-ss} \prec \rec_{F_v}(\BC(\Pi_\ell)_v |\det|_v^{(1-2n)/2}).$$
	\end{proof}
\section{Group Theory}
Let $\Gamma$ be a topological group and let $\frak{F}$ be a dense set of elements of $\Gamma$.  Let $k$ be an algebraically closed, topological field of characteristic 0 and let $d \in \bZ_{>0}$.
	Let $\mu: \Gamma \rightarrow k^\times$ 
be a continuous homomorphism such that $\mu(f)$ has infinite order for all $f \in \mathfrak{F}$. For $f \in \mathfrak{F}$ let $\cE^1_f$ and $\cE^2_f$ be two $d$-elements multi set of elements of $k^\times$. Let $\cM$ be an infinite subset of $\bZ$. For $m\in\cM$ suppose that 
	$$\rho_m: \Gamma \rightarrow \GL_{2d}(k)$$
be a continuous semi-simple representation such that for every $f \in \mathfrak{F}$ the multi-set of roots of the characteristic polynomial of $\rho_m(f)$ equals 
	$$\cE^1_f \bigsqcup \cE^2_f \mu(f)^m.$$
\begin{proposition}[Proposition 7.12 in \cite{HLTT}] There are continuous semi simple representations
	$$\rho^i: \Gamma \rightarrow \GL_d(k)$$
for $i = 1,2$ such that for all $f \in \mathfrak{F}$ the multi set of roots of the characteristic polynomial of $\rho^i(f)$ equals $\cE^i_f$.
\end{proposition}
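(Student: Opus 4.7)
The plan is to construct continuous $d$-dimensional pseudorepresentations $T^1, T^2 : \Gamma \to k$ whose restrictions to $\mathfrak{F}$ are the expected power sums of $\mathcal{E}^1_f$ and $\mathcal{E}^2_f$, and then invoke the standard theorem (e.g.\ \cite{T}) that a continuous pseudorepresentation of dimension $d$ over an algebraically closed field of characteristic zero is the trace of a unique continuous semisimple representation $\rho^i : \Gamma \to \GL_d(k)$. Because the power sums $p_1, \ldots, p_d$ determine a multiset of $d$ eigenvalues via Newton's identities, the characteristic-polynomial condition on $\mathfrak{F}$ will follow automatically once $T^i$ has the claimed values there.

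To define the $T^i$, I would fix $m_1 \neq m_2 \in \mathcal{M}$ and set, on the open subset $U = \{\gamma \in \Gamma : \mu(\gamma)^{m_1} \neq \mu(\gamma)^{m_2}\}$ (which contains $\mathfrak{F}$ by the infinite-order hypothesis),
\[
T^2(\gamma) := \frac{\tr \rho_{m_2}(\gamma) - \tr \rho_{m_1}(\gamma)}{\mu(\gamma)^{m_2} - \mu(\gamma)^{m_1}}, \qquad T^1(\gamma) := \tr \rho_{m_1}(\gamma) - \mu(\gamma)^{m_1} T^2(\gamma).
\]
A direct computation with the eigenvalue assumption shows $T^i(f) = \sum_{e \in \mathcal{E}^i_f} e$ for $f \in \mathfrak{F}$. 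For any third $m_3 \in \mathcal{M}$, the continuous identity
\[
(\mu(\gamma)^{m_3} - \mu(\gamma)^{m_1})(\tr\rho_{m_2}(\gamma) - \tr\rho_{m_1}(\gamma)) = (\mu(\gamma)^{m_2} - \mu(\gamma)^{m_1})(\tr\rho_{m_3}(\gamma) - \tr\rho_{m_1}(\gamma))
\]
holds on $\mathfrak{F}$ by the same calculation, hence on all of $\Gamma$ by density. This forces $T^i$ to be independent of the choice of pair $(m_1,m_2)$ on overlaps and yields the fundamental relation $\tr\rho_m(\gamma) = T^1(\gamma) + \mu(\gamma)^m T^2(\gamma)$ on $U$ for every $m \in \mathcal{M}$. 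Since $\mathcal{M}$ is infinite, varying the pair covers every $\gamma$ for which $\mu(\gamma)$ is not a root of unity of order dividing $\gcd\{m_i - m_j\}$. Applying the same construction to $\gamma^n$ in place of $\gamma$ produces candidates $T^i_n$ for higher power sums.

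To upgrade the $T^i$ to genuine $d$-dimensional pseudorepresentations (not merely continuous class functions), I would run the same Lagrange interpolation on the coefficients of $\det(X - \rho_m(\gamma))$: for $f \in \mathfrak{F}$ each such coefficient is a polynomial in $\mu(f)^m$ of degree at most $d$, so interpolation over $d+1$ distinct $m \in \mathcal{M}$ recovers continuous candidates for all the elementary symmetric functions of the eigenvalues of the sought-after $\rho^i(\gamma)$. Newton's identities, verified on the dense set $\mathfrak{F}$ and extended by density, match these candidates to the $T^i_n$ and give the pseudorepresentation identities. The main obstacle I anticipate is the continuous extension of these candidates across the closed residual set where no pair from $\mathcal{M}$ separates (every denominator vanishes); on that set the numerators also vanish (again by the identity above on $\mathfrak{F}$), but passing from this pointwise vanishing to a bona fide continuous extension requires a uniform argument, which I would attempt either by bounding the interpolation formulas locally or by taking limits along nets in $\mathfrak{F}$. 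Once that is resolved, Taylor's theorem produces the desired $\rho^1$ and $\rho^2$, and the eigenvalue condition on $\mathfrak{F}$ is immediate from the values of the $T^i$ there.
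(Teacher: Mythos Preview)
The paper does not give its own proof of this proposition; it is quoted verbatim from \cite{HLTT} and used as a black box. So there is nothing in the paper to compare your argument against. That said, let me comment on the approach itself.

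Your interpolation idea correctly recovers, on the open set where some denominator is nonzero, continuous candidates for the elementary symmetric functions of the hypothetical $\rho^i(\gamma)$. But the step ``Newton's identities $\ldots$ give the pseudorepresentation identities'' conflates two different things. Newton's identities are single-variable relations between power sums and elementary symmetric functions of one multiset; they say nothing about how $T^i(g_1g_2)$ relates to $T^i(g_1)$ and $T^i(g_2)$. A $d$-dimensional pseudorepresentation must satisfy the $S_{d+1}$-identity (equivalently, Chenevier's multiplicativity for the associated determinant law), and these are genuinely multi-variable: they involve $T^i$ evaluated at products $g_{i_1}\cdots g_{i_r}$ of several group elements. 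Knowing that at each individual $f\in\mathfrak F$ the numbers $T^i(f), T^i(f^2),\ldots$ are the power sums of an honest $d$-element multiset does not establish these identities, because products of elements of $\mathfrak F$ need not lie in $\mathfrak F$, and no relation between $\mathcal E^i_{f_1f_2}$ and $\mathcal E^i_{f_1},\mathcal E^i_{f_2}$ is assumed. One can try to extract the $d$-pseudorepresentation identities for $T^1$ and $T^2$ from the $2d$-identity for $\tr\rho_m$ by expanding in $\mu^m$ and using that the resulting exponential polynomial in $m$ vanishes for infinitely many $m\in\mathcal M$; this can be made to work but is substantially more delicate than what you wrote, and it is exactly the heart of the matter.

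The continuity gap you flag is also genuine and not merely technical: the complement of your open set contains all of $\ker\mu$ (or more generally the locus where $\mu(\gamma)$ has order dividing $\gcd\{m-m':m,m'\in\mathcal M\}$), and on that locus every $\tr\rho_m$ takes the same value, so no amount of interpolation over $m$ separates $T^1$ from $T^2$ there. Passing to limits along nets in $\mathfrak F$ begs the question, since nothing given forces $T^i(f_\alpha)$ to converge individually when only their sum is known to converge.

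For comparison, the argument in \cite{HLTT} does not go through pseudorepresentations at all: it analyses the irreducible constituents of the $\rho_m$ directly, using the infinitude of $\mathcal M$ together with a pigeonhole/finiteness argument to produce a splitting of some $\rho_m$ into two $d$-dimensional subrepresentations with the required eigenvalues on $\mathfrak F$.
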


\begin{theorem} Suppose that $\pi$ is a cuspidal automorphic representation of $\GL_n(\bA_F)$ such that $\pi_\infty$ has the same infinitesimal character as an algebraic representation of $\RS^F_{\bQ} \GL_n$. Then there is a continuous semi simple representation
	$$r_{p,\imath}(\pi): G_F \rightarrow \GL_n(\overline{\bQ}_p)$$
such that if $v \nmid p$ is a prime of $F$ which is split over $F^+$, then 
	$$\left.\WD(r_{p,\imath}(\pi)\right|_{W_{F_v}})^{\Frob-ss} \prec \imath^{-1}\rec_{F_v}(\pi_v|\det|_v^{(1-n)/2}).$$
In particular, if $\pi$ and $F$ are unramified at $v$, then $r_{p,\imath}(\pi)$ is unramified.
\end{theorem}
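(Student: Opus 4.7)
The plan is to realize $\pi$ as the cuspidal input to a parabolic induction on the unitary similitude group $G$, construct an associated $2n$-dimensional Galois representation via the work of the previous sections, and then split off $r_{p,\imath}(\pi)$ as an $n$-dimensional summand by a group-theoretic patching argument in a twisting parameter $M$. For each sufficiently large $M \in \bZ_{>0}$, set
\[ \Pi(M) := \Ind_{P_{(n)}^+(\bA^{p,\infty})}^{G(\bA^{p,\infty})}\bigl(1 \times (\pi\otimes ||\det||^M)^{p,\infty}\bigr). \]
By the Theorem in Section 7, for every $N$ sufficiently large there exist a weight $\rho(N,M)$ over $\bZ_{(p)}$ and a slope $a(N,M) \in \bQ$ such that every irreducible constituent $\Pi$ of $\Pi(M)$ occurs as a quotient of an admissible $G(\bA^{\infty})^{\ord,\times}$-subrepresentation of $H^0(\fX^{\ord,\min},\cE^{\ord,\sub}_{\rho(N,M)})_{\overline{\bQ}_p,\leq a(N,M)}$.

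Proposition \ref{localglobalss} then produces a continuous semisimple $R_{p,\imath}(\Pi(M)) : G_F \to \GL_{2n}(\overline{\bQ}_p)$ with semisimple local-global compatibility at every prime $v \nmid p$ of $F$ split over $F^+$, and Proposition \ref{precthm} strengthens this to the full $\prec$ monodromy bound at such primes. The local base-change identity $\BC(\Pi(M)_\ell)_v = (\pi_v\otimes|\det|_v^M) \boxplus (\pi_{{}^cv}\otimes|\det|_{{}^cv}^M)^{c,\vee}$ decomposes the target as
\[ \imath^{-1}\rec_{F_v}\bigl(\BC(\Pi(M)_\ell)_v\,|\det|_v^{(1-2n)/2}\bigr) \cong T_1 \oplus T_2\otimes \epsilon_p^{1-2n-2M}, \]
where $T_1 := \imath^{-1}\rec_{F_v}(\pi_v|\det|_v^{(1-n)/2})$ and $T_2 := \imath^{-1}\rec_{F_{{}^cv}}(\pi_{{}^cv}|\det|_{{}^cv}^{(1-n)/2})^{\vee,c}$. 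I then invoke the group-theoretic Proposition from Section 11 with $\Gamma = G_F$, $d=n$, $\mu = \epsilon_p^{-2}$, $\cM$ any infinite set of sufficiently large integers, $\fF$ the Chebotarev-dense set of Frobenii at primes of $F$ split over $F^+$ at which both $\pi$ and $F$ are unramified, and eigenvalue multisets $\cE^1_{\Frob_v}$, $\cE^2_{\Frob_v}$ read off from $T_1(\Frob_v)$ and $T_2(\Frob_v)\cdot \epsilon_p(\Frob_v)^{1-2n}$ respectively. This yields continuous semisimple $\rho^1, \rho^2 : G_F \to \GL_n(\overline{\bQ}_p)$ independent of $M$, and by Chebotarev density $R_{p,\imath}(\Pi(M)) \cong \rho^1 \oplus \rho^2\otimes\epsilon_p^{1-2n-2M}$ as global Galois representations. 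Set $r_{p,\imath}(\pi) := \rho^1$; the semisimple local-global identity at split $v\nmid p$ is then automatic, and $r_{p,\imath}(\pi)$ is unramified wherever $\pi$ and $F$ are, since it is a direct summand of the unramified $R_{p,\imath}(\Pi(M))$.

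The main obstacle is promoting this $\prec^{ss}$ identity to the full $\prec$ bound. Since $\epsilon_p$ is unramified at $v \nmid p$, the decomposition $R_{p,\imath}(\Pi(M))|_{G_{F_v}} \cong r_{p,\imath}(\pi)|_{G_{F_v}} \oplus \rho^2|_{G_{F_v}}\otimes\epsilon_p^{1-2n-2M}$ matches $T_1 \oplus T_2\otimes\epsilon_p^{1-2n-2M}$ summand-by-summand on semisimple parts, and by Lemma \ref{chenevier} it suffices to prove the $\prec_I$ inequality for $r_{p,\imath}(\pi)|_{G_{F_v}}$ against $T_1$. The difficulty is that dominance of a concatenation of two partitions does not imply pointwise dominance of the individual partitions, so Proposition \ref{precthm} applied to $R_{p,\imath}(\Pi(M))$ cannot by itself isolate the $\rho^1$-contribution. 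The resolution is to re-run the Bernstein-center-idempotent and wedge-power argument of the proof of Proposition \ref{precthm}, but with the idempotent $e_{\pi_v, \cB_v^\pi} \in \fz_{\cB_v^\pi}$ attached to the Bernstein component of $\GL_n(F_v)$ containing $\pi_v$ (lifted through the Levi $L_{(n),\lin} \cong \RS^F_{\bZ}\GL_n \subset G$) in place of the full $e_{\Pi(M),\cB_v}$. These Hecke operators cut out only the $r_{p,\imath}(\pi)$-part of $R_{p,\imath}(\Pi(M))$ on the Galois side, so applying the wedge-power vanishing of Section 10 to the trace $T_{r_{p,\imath}(\pi)}$ yields $B^{k,j}_{\eta,\zeta}(T_{r_{p,\imath}(\pi)}) = 0$ whenever $\wedge^k (T_1(g_\eta) - T_1(a_\eta)\zeta)^j = 0$, which by Lemma \ref{techdefn} is precisely the required $\prec_I$ bound, hence $\prec$ by Lemma \ref{chenevier}.
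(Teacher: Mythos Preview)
Your overall architecture—parabolic induction $\Pi(M)$, invoking Propositions~\ref{localglobalss} and~\ref{precthm} for the $2n$-dimensional representation, then applying the group-theoretic splitting proposition with $\mu=\epsilon_p^{-2}$ to carve out $r_{p,\imath}(\pi)$—matches the paper. One minor difference: the paper takes $\fF$ to contain \emph{all} $\sigma_v\in W_{F_v}$ mapping to a Frobenius power, for every $v\notin\cS^{\spl}\cup\{p\}$ (including ramified split $v$), rather than just Frobenii at unramified primes. This lets the paper read off the semisimplified identity at ramified split $v$ directly from the proposition, whereas your route requires a further separation at such $v$ after the fact.

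The genuine gap is in your last paragraph. The idempotent $e_{\pi_v,\cB_v^\pi}$ lives in the Bernstein center of $\GL_n(F_v)$, but the Hecke algebras $\cH_{\spl,\bZ_p}$ and $\fz_\ell^0$ acting on $H^0(\cX^{\min}_U,\cE^{\sub}_{U,\rho_{\underline{b}}})$ are built from the Bernstein center of $\GL_{2n}(F_v)$ (this is how $G(\bQ_\ell)$ looks at split places). There is no map ``lifting through the Levi $L_{(n),\lin}$'' that sends $e_{\pi_v,\cB_v^\pi}$ to an element of $\fz_{\fB_v}\subset\cZ(\bC[K_{\fB_v}\backslash\GL_{2n}(F_v)/K_{\fB_v}])$ with the property you want; the Levi only enters in forming $\Pi(M)$, not in the Hecke action on the cohomology spaces. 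Consequently the assertion that such an operator ``cuts out only the $r_{p,\imath}(\pi)$-part of $R_{p,\imath}(\Pi(M))$ on the Galois side'' is unsupported, and without it the wedge-power vanishing argument of \S10 cannot be run for the $n$-dimensional piece alone.

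The paper bypasses this entirely. It keeps the idempotent $e_{\Pi,\cB_v}$ (on $\GL_{2n}$) exactly as in Proposition~\ref{precthm}, so that one has the full $\prec$ relation for the $2n$-dimensional $R_p(\Pi^M,M)$, and then separates \emph{afterwards} on the Weil--Deligne side: let $\cW_r\subset\cW$ be the set of $\omega$ with nonzero isotypic component in $\WD(r_{p,\imath}(\pi)|_{G_{F_v}})^{\Frob-ss}$. For $M$ large one has
\[
\bigoplus_{\omega\in\cW_r} R_p(\Pi^M,M)[\omega]=r_{p,\imath}(\pi),\qquad
\bigoplus_{\omega\in\cW_r} \rec_{F_v}\bigl(\BC(\Pi^M_\ell)_v|\det|_v^{(1-2n)/2}\bigr)[\omega]\otimes\epsilon_p^{-M}=\imath^{-1}\rec_{F_v}(\pi_v|\det|_v^{(1-n)/2}),
\]
i.e.\ the two $n$-dimensional summands occupy disjoint $\cW$-supports inside the $2n$-dimensional object. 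Since $\prec$ is by definition a conjunction of inequalities indexed by $\omega\in\cW$, restricting the established $2n$-dimensional $\prec$ to $\omega\in\cW_r$ gives the desired $\prec$ for $r_{p,\imath}(\pi)$ against $\imath^{-1}\rec_{F_v}(\pi_v|\det|_v^{(1-n)/2})$ directly. No new Hecke operators are needed.
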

\begin{proof} Assume that $n > 1$. Let $\cS_{\spl}$ denote the set of primes $v$ of $F$ at which $\pi$ is ramified and does not split over $F^+$; and let $G_{F,S}$ denote the Galois group over $F$ of the maximal extension of $F$ unramified outside $S$. Let $\Gamma = G_{F,S}$ and $k = \overline{\bQ}_p$, and $\mu = \epsilon_p^{-2}$, and $\cM$ consisting of all sufficiently large integers $m$, and 
	$$\rho_m = R_{p,\imath}(\pi,m)= R_p\left(\imath^{-1} \Ind_{P_{(n)}^+(\bA^{p,\infty})}^{G(\bA^{p,\infty})} (\pi^\infty||\det||^m \times 1)\right) \otimes \epsilon_p^{-m} \qquad m \in \cM$$ 
	For each $v$ and let $k(v)$ denote the residue field of $F_v$. Let $\fF$ contain all elements $\sigma_v \in W_{F_v}$ which projects to a power of Frobenius under the map $W_{F_v} \rightarrow \Gal(\overline{k(v)}/k(v))$, where $v \notin \cS_{\spl} \cup \{p\}$. Denote by $\sigma_{{}^cv}$ the image of $\sigma_v$ under the isomorphism $W_{F_v} \cong W_{F_{{}^cv}}$ induced by conjugation $c$. Define $\cE^1_{\sigma_v}$ to be the multiset of roots of the characteristic polynomial $\imath^{-1}\rec_{F_v}(\pi_v|\det|_v^{(1-n)/2})(\sigma_v)$ and $\cE^2_{\sigma_v}$ equal to the multiset of roots of the characteristic polynomial of $\imath^{-1}\rec_{F_{{}^cv}}(\pi_{{}^cv}|\det|_{{}^cv}^{(-1+3n)/2})(\sigma_{{}^cv}^{-1}).$ We can then conclude 
	$$(\left.r_{p,\imath}(\pi)\right|_{W_{F_v}})^{ss} \cong \imath^{-1} \rec_{F_v}(\pi_v|\det|_v^{(1-n)/2}).$$

 Now, from the above and the density of $\fF$ in its proof, we have that for a sufficiently large integer $M$ in the sense of Corollary 11.1,
	$$R_p(\pi,M) \cong r_{p,\imath}(\pi) \oplus r_{p,\imath}({}^c\pi)^{c,\vee} \otimes \epsilon_p^{1-2n-2M}.$$ We will conclude $\left.\WD(r_{p,\imath}(\pi)\right|_{G_{F_v}})^{\Frob-{ss}} \prec \rec_{F_v}(\pi_v|\det|_v^{(1-n)/2})$ by comparing the isotypic components of each of the Weil-Deligne representations. Let $\cW_r$ denote the subset of $\cW$ (as defined in Definition \ref{prec}) containing equivalence classes of irreducible $n$-dimensional representations of $W_{F_v}$ over $\overline{\bQ}_p$ with open kernel that have nontrivial isotypic component on $\WD(\left.r_{p,\imath}(\pi)\right|_{G_{F_v}})^{\Frob-ss}$. Note that for large enough $M$, the isotypic components of $r_{p,\imath}(\pi)$ and $r_{p,\imath}({}^c\pi)^{c,\vee} \otimes \epsilon_p^{1-2n-2M}$ are completely disjoint. For such $M$, if $\Pi^M = \Ind_{P^+_{(n)}(\bA^{p,\infty})}^{G(\bA^{p,\infty})}(\pi^{\infty}||\det||^M \times 1)$, then $R_p(\Pi^M,M) = R_p(\Pi^M) \otimes \varepsilon^{-M}$ and so 
	$$\bigoplus_{\omega \in \cW_r} R_p(\Pi^M,M)[\omega] = r_{p,\imath}(\pi)$$
and
	$$\bigoplus_{\omega \in \cW_r} \rec_{F_v}(\BC(\Pi^M_{\ell})_v|\det|_v^{(1-2n-M)/2})[\omega] = \rec_{F_v}(\pi_v|\det|_v^{(1-n)/2}),$$
Since the definition of $\prec$ is component-by-component (indexed by elements of $\cW$), by Corollary 11.1 we can conclude the theorem.
\end{proof}

\begin{corollary} Suppose that $E$ is a totally real or CM field and that $\pi$ is a cuspidal automorphic representation such that $\pi_\infty$ has the same infinitesimal character as an algebraic representation of $\RS^{E}_{\bQ} \GL_n$. Then there is a continuous semi simple representation
	$$r_{p,\imath}: G_E \rightarrow \GL_n(\overline{\bQ}_p)$$
such that, if $q \neq p$ is a prime and if $v \mid q$ is a prime of $E$, then 
	$$\WD(\left.r_{p,\imath}(\pi)\right|_{W_{E_v}})^{\Frob-ss} \prec \imath^{-1} \rec_{E_v}(\pi_v|\det|_v^{(1-n)/2}).$$
\end{corollary}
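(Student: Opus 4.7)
The plan is to deduce this corollary from the preceding theorem by two reductions: (i) patching across a family of solvable CM extensions to remove the hypothesis that $v$ splits over the maximal totally real subfield, and (ii) quadratic base change to reduce the totally real case to the CM case.

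First I would handle the case $E = F$ CM and $v \nmid p$ an arbitrary prime. Choose a solvable, totally real, soluble Galois extension $L^+/F^+$ such that every prime of $L^+$ above $v|_{F^+}$ splits in the CM field $L := F L^+$; then every prime $w$ of $L$ above $v$ automatically splits over $L^+$. Let $\pi_L = \BC_{L/F}(\pi)$ be the solvable base change (Arthur--Clozel), which is regular algebraic and is either cuspidal or a suitable isobaric sum. Apply the preceding theorem to each cuspidal constituent of $\pi_L$ to obtain semisimple $r_{p,\imath}(\pi_L): G_L \to \GL_n(\overline{\bQ}_p)$ enjoying the $\prec$-compatibility at each (now split) prime $w \mid v$ of $L$. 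Letting $L$ vary over such extensions and invoking the patching lemma of Sorensen as in the proof of Theorem~1$^{ss}$, one produces a continuous semisimple $r_{p,\imath}(\pi): G_F \to \GL_n(\overline{\bQ}_p)$ whose restriction to each $G_L$ is $r_{p,\imath}(\pi_L)$; compatibility of the traces on Sorensen's dense set comes from Chebotarev combined with compatibility of local Langlands with solvable base change.

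To extract the $\prec$-statement at $v$ from the $\prec$-statements at the places $w \mid v$ of $L$, I would use that $\WD(r_{p,\imath}(\pi)|_{G_{F_v}})^{\Frob-ss}$ restricted to $W_{L_w}$ equals $\WD(r_{p,\imath}(\pi_L)|_{G_{L_w}})^{\Frob-ss}$, and similarly that $\rec_{L_w}(\pi_{L,w}|\det|^{(1-n)/2})$ is the restriction to $W_{L_w}$ of $\rec_{F_v}(\pi_v|\det|_v^{(1-n)/2})$ by compatibility of base change with local Langlands. Since the sequences $(m_{i,\omega})_i$ defining $\prec$ are read off from the restriction of the monodromy operator to isotypic inertial components, and $I_{L_w}$ has finite index in $I_{F_v}$, I would check (using Lemma~\ref{chenevier} and the elementary relationship between Jordan types of $N$ and $N$ restricted to an $I$-stable subspace under an open subgroup) that $\prec$ descends from $w$ to $v$. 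This is the step I expect to require the most care, because one must show that inducing the $L$-level inequality $(\sigma_w,N_w) \prec (\sigma'_w, N'_w)$ along $W_{L_w} \hookrightarrow W_{F_v}$ yields the corresponding inequality $(\sigma_v, N_v) \prec (\sigma'_v, N'_v)$ under the dominance order.

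Finally, for $E$ totally real, choose an imaginary quadratic field $K/\bQ$ linearly disjoint from $E$ so that $F := EK$ is CM with $F^+ = E$. The base change $\pi_F = \BC_{F/E}(\pi)$ is a regular algebraic automorphic representation of $\GL_n(\bA_F)$; generically it is cuspidal, and in the exceptional case $\pi \cong \pi \otimes \chi_{F/E}$ one descends $\pi$ to an automorphic representation on a smaller group and reduces to a lower-rank instance. Applying the CM case above to $\pi_F$ produces $r_{p,\imath}(\pi_F): G_F \to \GL_n(\overline{\bQ}_p)$ satisfying the desired $\prec$ at every prime $w \nmid p$ of $F$. To descend to $G_E$, I take two linearly disjoint auxiliary imaginary quadratic fields $K_1, K_2$, form $F_i = E K_i$, and apply Sorensen's patching lemma again to $\{r_{p,\imath}(\pi_{F_i})\}$: since $G_E$ is generated by $G_{F_1} \cup G_{F_2}$ and the two representations are compatible over $G_{F_1 F_2}$ by uniqueness of semisimple traces, patching yields $r_{p,\imath}(\pi): G_E \to \GL_n(\overline{\bQ}_p)$, and the $\prec$ bound at any prime $v$ of $E$ follows from the $\prec$ bound at the primes of $F_i$ above $v$ by the same monodromy-restriction argument as in the CM step. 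The main obstacle throughout is tracking how the dominance order behaves under restriction to finite-index subgroups of inertia, and verifying that the chosen families of solvable extensions satisfy Sorensen's patching hypotheses uniformly in $v$.
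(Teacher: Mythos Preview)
Your approach is essentially the paper's: the paper's proof is a one-line citation of Sorensen's patching lemma (Lemma~1 of \cite{So}) together with the argument of Theorem~VII.1.9 of \cite{HT}, which is precisely the solvable-base-change-plus-patching scheme you describe. Your one flagged obstacle—descending $\prec$ from $W_{L_w}$ to $W_{F_v}$—dissolves once you note that the auxiliary extensions in the Harris--Taylor/Sorensen patching can (and should) be chosen so that $L_w/F_v$ is \emph{unramified}: then $I_{L_w} = I_{F_v}$, and since the semisimplified equality at $v$ is already established, Lemma~\ref{chenevier} reduces $\prec$ to $\prec_I$, a condition that depends only on the inertia action and the single nilpotent $N$, both of which are literally unchanged under restriction to $W_{L_w}$.
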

\begin{proof} This can be deduced from Theorem 7.13 by using Lemma 1 of \cite{So} using the same argument as in Theorem VII.I.9 of \cite{HT}.
\end{proof}
	
\subsection*{Acknowledgements}
The author would like to thank Richard Taylor for suggesting this problem and for his invaluable assistance throughout this project; she would also like to thank Gaetan Chenevier, David Geraghty, and Jay Pottharst for answering questions. The author was supported by a National Defense Science \& Engineering Fellowship and an NSF Graduate Research Fellowship.

\end{document}